\DeclarePairedDelimiter\floor{\lfloor}{\rfloor}
\newtheorem{defi}{Definition}[section]
\newtheorem{prop}[defi]{Proposition}
\newtheorem{theorem}[defi]{Theorem}
\newtheorem{lemma}[defi]{Lemma}
\newtheorem{corollary}[defi]{Corollary}
\newtheorem{remarks}[defi]{Remarks}
\newtheorem{remark}[defi]{Remark}
\newcommand{\bdefi}{\begin{defi}}
\newcommand{\edefi}{\end{defi}}
\newcommand{\bprop}{\begin{prop}}
\newcommand{\eprop}{\end{prop}}
\newcommand{\btheo}{\begin{theo}}
\newcommand{\etheo}{\end{theo}}
\newcommand{\btheofr}{\begin{theofr}}
\newcommand{\etheofr}{\end{theofr}}
\newcommand{\blemm}{\begin{lemm}}
\newcommand{\elemm}{\end{lemm}}
\newcommand{\blemmfr}{\begin{lemmfr}}
\newcommand{\elemmfr}{\end{lemmfr}}
\newcommand{\brema}{\begin{rema}}
\newcommand{\erema}{\end{rema}}
\newcommand{\bexer}{\begin{exem}}
\newcommand{\eexer}{\end{exem}}
\newcommand{\bexems}{\begin{exems}}
\newcommand{\eexems}{\end{exems}}
\newcommand{\bconj}{\begin{conj}}
\newcommand{\econj}{\end{conj}}
\newcommand{\bcoro}{\begin{coro}}
\newcommand{\ecoro}{\end{coro}}
\renewcommand\mathcal{\mathscr}
\newcommand{\F}{{\cal F}}
\newcommand{\OOO}{{\cal O}}
\renewcommand{\P}{{\cal P}}
\newcommand{\R}{{\cal R}}
\newcommand{\maths}[1]{{\mathbb #1}}  
\newcommand{\CC}{\maths{C}}
\newcommand{\NN}{\maths{N}}
\newcommand{\QQ}{\maths{Q}}
\newcommand{\RR}{\maths{R}}
\newcommand{\SSS}{\maths{S}}
\newcommand{\ZZ}{\maths{Z}}
\def\11{{\mathbbm 1}}
\newcommand{\weakstar}{\overset{*}\rightharpoonup}
\def\e{\varepsilon}
\newcommand{\bigO}{\operatorname{O}}
\newcommand{\card}{{\operatorname{Card}}}
\newcommand{\covol}{\operatorname{covol}}
\newcommand{\diam}{{\operatorname{diam}}}
\renewcommand{\Im}{{\operatorname{Im}}}
\newcommand{\Leb}{\operatorname{Leb}}
\renewcommand{\Re}{{\operatorname{Re}}}
\newcommand{\vol}{\operatorname{vol}}
\newcommand{\supp}{\operatorname{supp}}
\newcommand{\lvl}{\operatorname{lvl}}
\newcommand{\sys}{\operatorname{sys}}
\newcommand{\Err}{\operatorname{Err}}
\newcommand{\Grid}{\operatorname{Grid}}
\newcommand{\Lat}{\operatorname{Lat}}
\newcommand{\Pow}{\operatorname{Pow}}
\newcommand\numberthis{\addtocounter{equation}{1}\tag{\theequation}}
\begin{document}
\thispagestyle{plain}
\begin{center}
	\Large
	\textbf{Effective statistics of pairs of\\fractional powers of complex grid points}
	
	\large
	\vspace{0.3cm}
	\hspace{0.35cm} Rafael Sayous \orcidlink{0009-0007-6306-8546}
	
	\normalsize
	\vspace{0.1cm}
	\today
\end{center}
\noindent \textbf{Abstract}: Using a standard definition of fractional powers on the universal cover $\exp:S\to \CC^*$ where $S$ is the standard infinite helicoid embedded in $\RR^3$, we study the statistics of pairs at various scalings from the countable family $\{n^\alpha \, : \, n \in \exp^{-1}(\Lambda) \}$ for every complex grid $\Lambda$ and every real parameter $\alpha \in \, ]0,1[\,$. We prove the convergence of the empirical pair correlation measures towards a rotation invariant measure with explicit density. In particular, with the scaling factor $N\mapsto N^{1-\alpha}$, we prove that there exists an exotic pair correlation function which exhibits a level repulsion phenomenon. For other scaling factors, we prove that either the pair correlations are Poissonian or there is a total loss of mass. We give an error term for this convergence.

\smallskip
\noindent {\bf Keywords}: pair correlations, level repulsion, fractional power, lattices, convergence of measures.
\\ {\bf MSC}: 11J83, 11K38, 11P21, 28A33.


\vspace{-0.25cm}
\section{Introduction}\label{sec:intro}
Let $G$ be a locally compact metric additive group. In order to comprehensively understand the distribution of a countable family $(u_i)_{i\in I}$ in $G$, an essential aspect involves analysing the statistics of the spacings between selected pairs of these points, seen at various scalings. The approach consisting in taking all pairs into account is the study of \emph{pair correlations}. More precisely, let $\phi : [0,+\infty[ \, \to G^G$ be a scaling function, and $h: I \to [0,+\infty]$ be a height function (i.e.~a nonnegative function that every set $\{i \in I \, : \, h(i) \leq N\}$ is finite). Our focus lies on the asymptotic of the multisets $F_N=\{ \phi(N)(u_i-u_j) \}_{h(i),h(j) \leq N, i \neq j}$ as $N \to \infty$.

These problems initially occurred in physics, especially in quantum chaos, which has lead to a purely mathematical point of view of pair correlations. See for instance \cite{rudnick1998paircorrel_fracpartpoly, aichinger2018quasienergyspectra_pairs, larcher2020pair_maximaladdenergy} for questions directly linked to quantum physics. Determining the behaviour of pair correlations for a deterministic numerical sequence may present an intriguing challenge, see the papers \cite{rudnick1998paircorrel_fracpartpoly, boca2005pairs_farey, larcher2018poissonpairs_negativeresults, lutsko2022poissonianslowlygrowingseq, lutsko2023longrangepaircor, paulinparkko2022a_pairs_log, paulinparkko2024_pairs_complexlog}. For instance, when $\alpha >0$ is small enough, the sequence $(\{n^\alpha\})_{n\in\NN}$, where $\{\cdot\}$ denotes the fractional part function, exhibits a behaviour commonly called \emph{Poisson pair correlations}, as proven by C.~Lutsko, A.~Sourmelidis and N.~Technau in their paper \cite{lutsko2021pairfracpowers}, as well as in the special case $\alpha=\frac{1}{2}$, as shown by D.~El-Baz, J.~Marklof and I.~Vinogradov in \cite{elbaz2015two_point_correl_fract_sqrtn_poisson}.

In our setting, the metric group $G$ will then be $(\CC,+)$. Recall that a complex \emph{$\ZZ$-lattice} is a discrete additive subgroup of $\CC$ generating $\CC$ as a real vector space, and that a complex subset $\Lambda$ is called a \emph{$\ZZ$-grid} if there exist a (unique) $\ZZ$-lattice $\vec{\Lambda}$ and a complex number $z \in\CC$ such that $\Lambda=z+\vec{\Lambda}$. The spaces $\Lat_\CC$ of complex $\ZZ$-lattices, and $\Grid_\CC$ of complex $\ZZ$-grids, are endowed with the Chabauty topology (since lattices and grids are closed subsets of $\CC$). In this introduction, all grids and lattices are assumed to be unimodular (i.e.~of covolume $1$ such as the lattice $\ZZ[i]$). In what follows, we fix a real number $\alpha \in \, ]0,1[\,$ and a unimodular $\ZZ$-grid $\Lambda \in \Grid_\CC$. We have chosen to widen our focus, working with grids instead of lattices only, since grids have become trendy in number theoretical issues, see for instance \cite{elkiesmcmullen2004gapssqrtn,shapira2013gridsdensevalues, akaeinsiedlershapira2016integerptssphereorthogrids, limsaxceshapira2019dimbadgrid, moshraoshapira2024badgridskdivlat}. Let $\gamma \in \, ]0,1[\,$, that we use as a parameter for the scaling in this introduction. To conduct a much more involved study than the paper \cite{sayous2023realpaircor} on the pair correlation statistics of the real sequence $(n^\alpha)_{n\in\NN}$, we will define a sequence of measures for the pair correlations of the "$\alpha$ powers" of grid points in $\Lambda$. In this introduction, we present the case $\alpha = \frac{1}{b}$ where $b\in\NN-\{0\}$. In this particular case, the study we conduct can be simplified and translated to the statistics of scaled differences $N^\gamma (v-u)$ where $u$, $v$ are $b$-th roots of grid points with norm less than $N$. Such a scaling factor $N^\gamma$ is a usual choice, see \cite{weiss2023explicit, nairpollicott2007paircorhighdim}. In other words, we study the sequence of \emph{empirical pair correlation measures} given by
$$ \R_N = \frac{\alpha}{N^{2(2-\alpha-\gamma)}} \sum_{\substack{n, m \in \Lambda, \, n \neq m \\ 0 < |n|,|m|\leq N}} \; \sum_{\substack{u,v \in \CC^* \\ u^b = m, \, v^b=n}} \Delta_{N^\gamma(v-u)},$$
where, for all complex number $z\in\CC$, we denote by $\Delta_z$ the Dirac mass at $z$. We denote by $\Leb_\CC$ the Lebesgue measure on $\CC$, and we define the nonnegative measurable function $\rho=\rho_{\alpha,\gamma,\vec{\Lambda}}$ by
$$
\rho : z \mapsto \left\{
\begin{array}{clc}
	\displaystyle 0 & \mbox{ if } & \gamma > 1-\alpha,\vspace{2mm}\\
	\displaystyle \frac{\pi}{\alpha^2(2-\alpha)} & \mbox{ if } & \gamma<1-\alpha, \vspace{2mm}\\
	\displaystyle \frac{\alpha^\frac{2}{1-\alpha}}{(1-\alpha)} |z|^{-\frac{4-2\alpha}{1-\alpha}}  \sum_{\substack{p\in\vec{\Lambda} \\ |p| \leq \frac{|z|}{\alpha}}} |p|^{\frac{2}{1-\alpha}} & \mbox{ if } & \gamma=1-\alpha.
\end{array} \right.
$$
We use the notation $D(z_0,r)=\{z\in\CC \, : \, |z-z_0| < r\}$ for open disks. For all Radon measures $\mu_N$, for $N\in\NN$, and $\mu$ on $\CC$, the sequence $(\mu_N)_{N\in\NN}$ is said to \emph{vaguely converges} towards $\mu$ if for every continuous functions $f:\CC \to \CC$ with compact support, we have the convergence $\mu_N(f) \to \mu(f)$. In this case, we write $\mu_N \weakstar \mu$.
\begin{theorem}\label{th:intro}
	We have the following vague convergence, as $N \to\infty$,
	$$
	\R_N \weakstar \rho \, \Leb_\CC.
	$$
\end{theorem}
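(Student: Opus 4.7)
Fix a continuous test function $f:\CC\to\CC$ with compact support in $D(0,R)$ and set $L = \{u \in \CC^* : u^b \in \Lambda\}$, so that $\R_N$ is supported on differences $v - u$ with $u,v \in L$ and $|u|,|v|\leq N^\alpha$. The plan is to re-parameterise the contributing pairs by $u \in L$ and $p = v^b - u^b \in \vec{\Lambda}$. Since the support of $f$ forces $|v-u|\leq R\,N^{-\gamma}$ while $|u|,|v|\asymp N^\alpha$, among the $b$ possible $b$-th roots $\omega^k(u^b+p)^{1/b}$ of $u^b+p$ (where $\omega=e^{2i\pi/b}$), only the principal branch $k=0$ can satisfy the proximity constraint; the $b-1$ other sheets yield differences of size $\asymp |u| \gg RN^{-\gamma}$. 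On this branch, a first-order Taylor expansion gives
\[ v - u = \frac{p}{b\,u^{b-1}} + O\!\left(\frac{|p|^2}{|u|^{2b-1}}\right), \]
so that $\R_N(f)$ equals, up to a controlled error,
\[ \frac{\alpha}{N^{2(2-\alpha-\gamma)}} \sum_{\substack{u \in L \\ 0 < |u| \leq N^\alpha}} \sum_{\substack{p \in \vec{\Lambda}\setminus\{0\} \\ |u^b+p|\leq N}} f\!\left(\frac{N^\gamma p}{b\,u^{b-1}}\right). \]

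\textbf{Regime split.} For the argument of $f$ to lie in its support one needs $|p|\leq C|u|^{b-1}N^{-\gamma}\leq CN^{1-\alpha-\gamma}$, which cleanly separates the three cases. If $\gamma > 1-\alpha$, this bound eventually excludes every $p\neq 0$ and $\R_N(f)\to 0$. If $\gamma < 1-\alpha$, the admissible $p$ fill a disk whose radius tends to infinity; one approximates the inner sum by $(b^2|u|^{2(b-1)}/N^{2\gamma})\int_\CC f$ (the standard Riemann-sum estimate for sums $\sum_{p\in\vec{\Lambda}}g(sp)$ with $s\to 0$), then $\sum_{u\in L}$ by an integral against the asymptotic density $\sigma(u)=b^2|u|^{2(b-1)}$ of $L$ (read off from the pushforward of the counting measure of $\Lambda$ by $z\mapsto z^b$). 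All powers of $N$ cancel thanks to $\alpha b = 1$, and the resulting constant matches $\pi/(\alpha^2(2-\alpha))$.

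\textbf{Critical scale $\gamma = 1-\alpha$.} This is the heart of the theorem. Now $|p|$ is uniformly $O(1)$, so one keeps the discrete sum over $p$ and approximates only $\sum_{u\in L}$ by $\int F(u)\sigma(u)\,du$. Swap the $p$-sum with the $u$-integral and, for each fixed $p\neq 0$, apply the substitution $u \mapsto z := N^{1-\alpha}p/(b\,u^{b-1})$: this is a $(b-1)$-to-one branched cover from $\{0<|u|\leq N^\alpha\}$ onto $\{|z|\geq\alpha|p|\}$. The area formula, combined with $|u|^{b-1}=N^{1-\alpha}|p|/(b|z|)$, turns every factor of $|u|$ into a power of $|z|$ and yields the exponent $-(4-2\alpha)/(1-\alpha)$; the algebraic identity $(1-\alpha)\cdot\tfrac{2b}{b-1}=2$ then absorbs every remaining power of $N$ into the prefactor $N^2$, leaving a factor $|p|^{2/(1-\alpha)}$. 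Finally, reswapping the outer $p$-sum with the $z$-integral transforms the per-$p$ constraint $|z|\geq\alpha|p|$ into the per-$z$ constraint $|p|\leq|z|/\alpha$, and the remaining constants collapse to $\alpha^{2/(1-\alpha)}/(1-\alpha)$, producing exactly $\int f(z)\rho(z)\,dA_z$.

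\textbf{Main obstacle.} The delicate part is quantifying every approximation, in view of the effective error rate announced in the abstract. The principal difficulties will be: (i) the Taylor error $|p|^2/|u|^{2b-1}$ degenerates near $u=0$, so a small disk $|u|\leq\delta_N$ must be carved out and its contribution controlled separately by a crude pair count; (ii) every replacement of a lattice sum by an integral (over $L$, and over $\vec{\Lambda}$ in the subcritical regime) must carry an explicit boundary error, typically from the standard $O(\mathrm{perimeter})$ count of lattice points in convex sets, and these errors must be propagated coherently through the subsequent changes of variables; (iii) the admissibility condition $|u^b+p|\leq N$ perturbs the domain $|u|\leq N^\alpha$ in a thin, $p$-dependent annulus whose contribution must also be absorbed into the overall error.
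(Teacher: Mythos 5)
Your proposal is correct in strategy and in all the key algebraic computations, but it takes a genuinely different route from the paper. The paper never proves Theorem~\ref{th:intro} directly: it develops a general framework valid for all $\alpha\in\,]0,1[\,$ (lifting to the universal cover of $\CC^*$, defining level-$k$ $\alpha$-powers $z^{[\alpha,k]}$, decomposing via the symmetry $I_N^\pm$, the linear approximation Lemma~\ref{lem:complex_linear_approx}, the biholomorphisms $h_{p,k}$, and an averaging over the levels $k$), proves Theorem~\ref{th:effective_cv_complex_correlations} with explicit error terms, specialises to rational $\alpha$ in Theorem~\ref{th:main_rat}, and only then bridges $\R_N$ to $\R_{N,0,b}^{1/b,\Lambda}$ via the counting Lemma~\ref{lem:counting_remove_cut}, which controls the pairs straddling the branch cut.

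Your argument instead exploits the algebraic structure available exactly when $\alpha=1/b$: you work directly with the set $L=\{u\in\CC^*\,:\,u^b\in\Lambda\}$ of $b$-th roots, which collapses the paper's double index (a grid point $m$ and a level $k\in\{0,\dots,b-1\}$) into the single index $u$. This removes the need for the branch-cut apparatus and for the explicit averaging step of Section~\ref{sssec:average_rat}: the paper's covering multiplicity $b-a=b-1$ in Equation~\eqref{eq:circle_cover_rot_rat} reappears on your side as the degree of the $(b-1)$-to-one branched cover $u\mapsto z=N^{1-\alpha}p/(bu^{b-1})$, and the density $\sigma(u)=b^2|u|^{2(b-1)}$ of $L$ plays the role of the Jacobian of the paper's map $h_{p,k}$. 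I have checked your computations: the exponent $-(4-2\alpha)/(1-\alpha)$, the constant $\alpha^{2/(1-\alpha)}/(1-\alpha)$ in the critical regime (via the identity $(1-\alpha)\cdot 2b/(b-1)=2$ and $\frac{\alpha b^2}{b-1}=\frac{1}{1-\alpha}$), and the Poissonian constant $\pi/(\alpha^2(2-\alpha))=\pi b^3/(2b-1)$ in the subcritical regime all come out right. The trade-off is scope: the paper's route works for irrational $\alpha$, where no analogue of $L$ exists and the universal cover is unavoidable; your route is more elementary and self-contained but is inherently tied to $\alpha=1/b$.

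One remark on completeness: you correctly enumerate the three error-control steps that remain (carving out a small disk $|u|\le\delta_N$ because the Taylor remainder $|p|^2/|u|^{2b-1}$ degenerates, lattice-sum-to-integral boundary errors for both $L$ and $\vec\Lambda$, and the thin annulus from $|u^b+p|\le N$), but you leave them as a sketch. For the statement you are asked to prove --- vague convergence only, no rate --- these are standard, and your estimates $|p|\le CN^{1-\alpha-\gamma}$ and $|z|\ge\alpha|p|$ give the right domains, so the gaps are fillable. Do note that the principal-branch reduction uses $|u|\ge\sys_\Lambda^{1/b}>0$ (so that $|u(\omega^k-1)|$ stays bounded below for $k\neq 0$), which is worth saying explicitly; and when approximating $\sum_{u\in L}$ by $\int\sigma(u)\,du$ in the critical regime, the Riemann-sum error must be controlled term-by-term in $p$ since the test function oscillates on the scale of the spacing of $L$ near $|u|\asymp N^\alpha$, which is essentially what Lemma~\ref{lem:complex_riemann_approx} accomplishes in the paper after the change of variable.
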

This result will be proven effective in the following sense: let $f \in C_c^1(\CC)$, choose $A>1$ such that $\supp f \subset D(0,A)$ and assume that $\gamma=1-\alpha$. Then, we have a rate for this convergence, given by the estimate, as $N\to\infty$,
$$
\R_N(f) = \int_{\CC}f(z)\rho(z) \, dz + \bigO_{\alpha,\Lambda} \Big(\frac{A^4(\|f\|_\infty + \|df\|_\infty)}{N}\Big).
$$
Theorem \ref{th:intro} indicates that $\rho$ describes the pair correlations of $\alpha=\frac{1}{b}$ powers of grid points. This is essentially a particular case of Theorem \ref{th:main}, the main result of the present paper which holds for every real number $\alpha \in \, ]0,1[\,$ and for which we give an error term in Remark \ref{rk:error_term_main}. The proof of Theorem \ref{th:intro} using Theorem \ref{th:main} and some counting lemma is done at the very end of Section \ref{sec:remove_branch_cut}.
\begin{figure}[ht]
	\centering
	\scalebox{0.75}{
		\begin{adjustbox}{clip, trim=3.2cm 0.8cm 3.5cm 3.cm, max width=\textwidth}
			\includegraphics{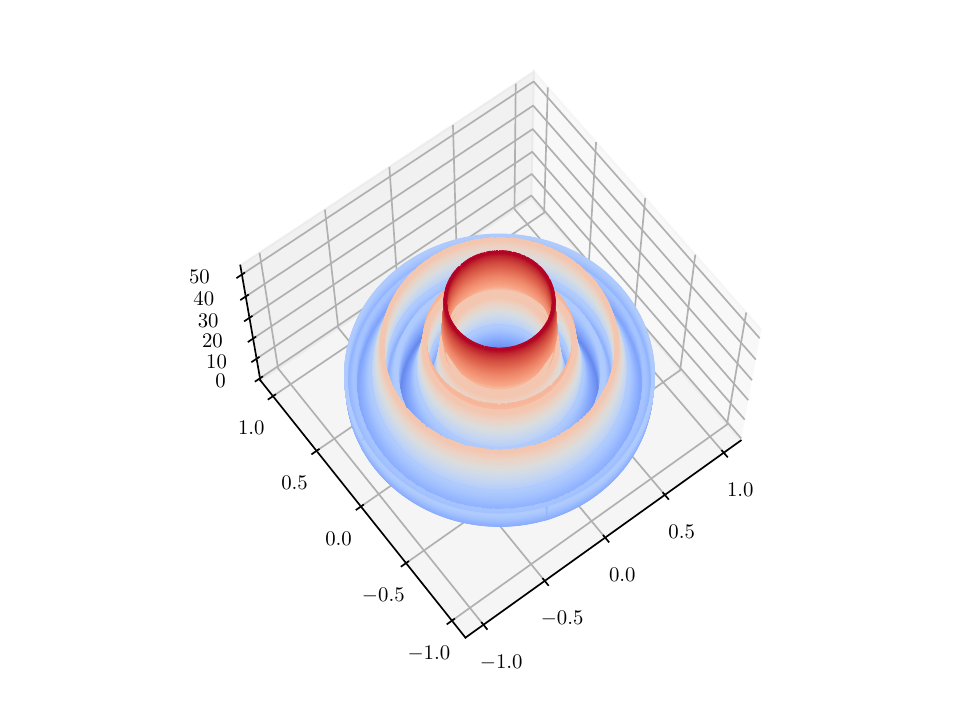}
		\end{adjustbox}
	}
	\scalebox{0.75}{
		\begin{adjustbox}{clip, trim=3.2cm 1.5cm 3.5cm 3.cm, max width=\textwidth}
			\includegraphics{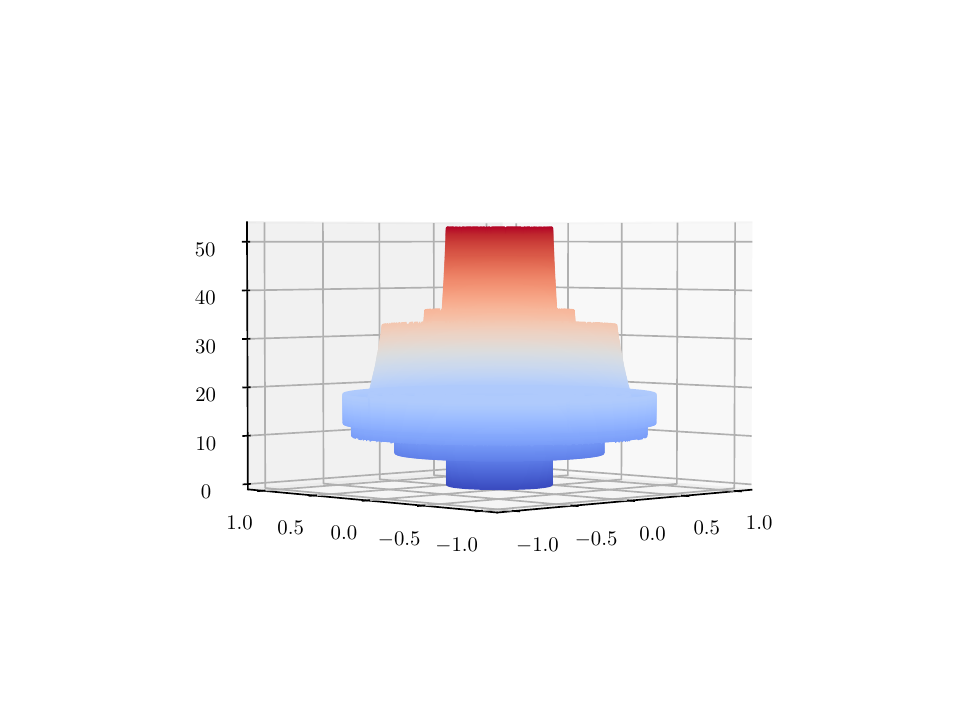}
		\end{adjustbox}
	}
	\caption{The graph of the function $\rho$ restricted to the disk $D(0,1)$ in the case $\alpha=\frac{1}{3}$, $\gamma=1-\alpha=\frac{2}{3}$ and $\Lambda=\ZZ[i]$.}
	\label{fig:th_density_alpha13}
\end{figure}

In other words, the pair correlations for the $b$-th roots of grid points have a constant density if $\gamma<1-\alpha$ (we say that these pair correlations exhibit a \emph{Poisson behaviour}), have an exotic density if $\gamma=1-\alpha$ and there is a total loss of mass if $\gamma>1-\alpha$. This phase transition phenomenon frequently appears in the study of pair correlations, see for instance \cite{paulinparkko2022a_pairs_log, paulinparkko2024_pairs_complexlog, sayous2023realpaircor}. We must insist here that we are not looking for any pseudorandom behaviour: the set $\Lambda$ is a typical example of a well distributed set (when seen from afar), and we are interested in the way the function $z \mapsto z^\alpha$ (which is transcendental if $\alpha \notin \QQ$) modifies this set at the level of pair correlations, since of course this function does not preserve gaps.

The study of pair correlation in a noncompact setting has already been fruitful in various fields. On $G=\RR$, the lengths of closed geodesics in negative curvature have Poisson pair correlation or converge to an exponential probability measure (depending on the scaling factor) \cite{pollicott1998correlpairsclosedgeod, paulinparkko2022c_exponcounting_paircorrel}. Still on $G=\RR$, for real points $\alpha, \beta$ verifying some diophantine condition, the image of $\ZZ^2$ by the quadratic form $(x,y) \mapsto (x-\alpha)^2 + (y-\beta)^2$ also exhibits a Poisson pair correlation \cite{marklof2003pair_cor_quad_form} (see also \cite{marklof2002quadformII} for a related result in higher dimension). On the group $G=(K,+)$ where $K$ is a $p$-adic field with integer ring denoted by $\OOO$, the pair correlations of squares of integers $\{ z^2 \, : \, z \in \OOO \}$ has also been studied in \cite{zaharescu2003paircors_square_padic} and has a behaviour which can arguably be called Poisson.

In Section \ref{sec:lemmas}, we first define a more general setting for pair correlations than the one of Theorem \ref{th:intro}, using the universal cover $\CC$ of $\CC^*$ and dividing it into levels: this novel technical step which will allow us to retrieve some algebraic properties of integer powers for fractional ones, giving us technically handy geometric interpretations of the studied pairs throughout the paper. Then, we state Theorem \ref{th:main}, which is the main theorem in this paper and of which Theorem \ref{th:intro} is a special case, as well as a version using separated levels, namely Theorem \ref{th:effective_cv_complex_correlations}, and we end this section by proving the main lemmas we will use for the proof of the latter theorem. In Section \ref{sec:proof_thm}, we prove Theorem \ref{th:effective_cv_complex_correlations}, using a linear approximation, an approximation of Riemann sums after appropriate changes of variable defined locally (depending on the levels introduced in Section \ref{sec:lemmas}), an averaging argument over levels (which is necessary to avoid discrepancy as illustrated in Figure \ref{fig:correlpairs_nonrotinv_exotic_alpha2342}), and various counting results. In Section \ref{sec:remove_branch_cut}, we give an upper bound on the number of pairs which were counted out by separating the grid points into levels in Section \ref{sec:lemmas}, allowing us to straightforwardly derive Theorems \ref{th:main} from Theorem \ref{th:effective_cv_complex_correlations}. The change of variable step is inspired by the unfolding technique, illustrated in \cite[§~2.1]{marklof2002quadformII}. But this paper cannot be reduced to the unfolding technique, in particular for obtaining the error terms.

\begin{figure}[ht]
	\centering
	\scalebox{0.98}{
		\begin{adjustbox}{clip, trim=2.5cm 0.85cm 2.5cm 1.45cm, max width=\textwidth}
			\includegraphics{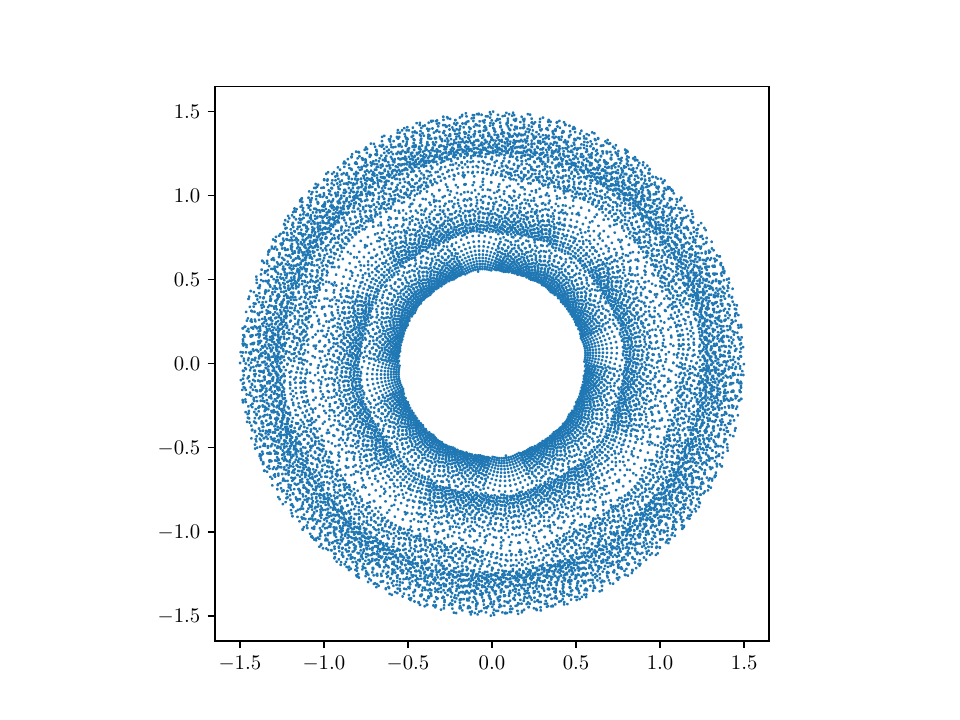}
		\end{adjustbox}
	}
	\caption{The complex points $N^{\frac{19}{42}}(n^\frac{23}{42}-m^\frac{23}{42})$ of only "one level" (with the notations of Section \ref{ssec:levels}, these are the points $N^{\frac{19}{42}}(n^{[\frac{23}{42},0]}-m^{[\frac{23}{42},0]})$) inside the disk $D(0,\frac{3}{2})$, for lattices points $m,n \in \ZZ[i]$ with $0 < |m|,|n| \leq N = 20$.}
	\label{fig:correlpairs_nonrotinv_exotic_alpha2342}
\end{figure}

\vspace{-0.03cm}One may consider to generalise Theorem \ref{th:intro} to any discrete set of constant density instead of a complex grid, and we expect the error term given after Theorem \ref{th:intro} (or the more precise version given in Remark \ref{rk:error_term_main} for Theorem \ref{th:main}) to be particularly more complicated to compute.

\section{The main statement and technical lemmas}\label{sec:lemmas}
In all this paper, we fix $\alpha \in \, ]0,1[\,$ as well as $\Lambda$ a $\ZZ$-grid in $\CC$ (not necessarily unimodular). We denote by $\vec{\Lambda}$ its underlying $\ZZ$-lattice. We set
$$S = \{ (r e^{i\omega}, \omega) \, : \, r>0, \omega \in \RR \} \subset \CC \times \RR$$
A standard way in complex analysis to define a power function is to use the Riemann surface $S$. On the universal cover $\exp:\CC\to\CC^*$ of $\CC^*$, we set $\Lambda_S = \exp^{-1}(\Lambda)$, which consists of infinitely many copies of the grid $\Lambda$ (minus the origin if $\Lambda$ contains $0$): for every $t \in \RR$, the map $\exp$ restricts to a bijection $\Lambda_S \cap \{z \, : \, t \leq \Im(z) < t+2\pi \} \to \Lambda$. We use the identification $z \mapsto (\exp(z), \Im(z))$ between the universal cover $\CC$ and the helicoid $S$. The set $\Lambda_S$ is then identified with $\{ (n,\omega) \, : \, n\in\Lambda, \; \omega \in \arg(n)\} \subset S$. We define the $\alpha$ power function on this surface by
$$
\begin{array}{rrcl}
	\Pow_\alpha : & S & \to & S
	\\ & (re^{i\omega}, \omega) & \mapsto & (r^\alpha e^{i\alpha\omega}, \alpha \omega),
\end{array}
$$
which corresponds to the multiplication by $\alpha$ on the universal cover $\CC$. We are then interested in pair correlations of the countable set $\Pow_\alpha(\Lambda_S)$. Let $\pi_\CC$ (resp.~$\pi_\RR$) denote the projection on the complex (resp.~real) coordinate of $\CC \times \RR$. To focus on the complex part of such three dimensional vector differences, we flatten them and we study the statistical distribution of the complex differences $\pi_\CC(\Pow_\alpha(n)-\Pow_\alpha(m))$ for all $m,n \in \Lambda_S$ such that $|\pi_\RR(n-m)| < 2\pi$. This condition is introduced for the points $m$ and $n$ to be on the same "copy" of $\CC^*$ in its universal cover $\CC$. This is not a constraint since we multiply all differences $\Pow_\alpha(n)-\Pow_\alpha(m)$ by a scaling factor going to infinity and evaluate the related measures on a compactly supported function: after rescaling, pairs of points failing to satisfy this condition uniformly give rise to differences in $\CC \times \RR$ escaping all compact subsets. Let $\phi,\psi : \NN \to \, ]0,+\infty[\,$ be two functions converging to $+\infty$, which we respectively call the \emph{scaling factor} and the \emph{renormalization factor}. Throughout this paper, we fix $\lambda \in [0,+\infty]$ and we assume the following convergence and formula
\begin{align*}
	& \frac{\phi(N)}{N^{1-\alpha}} \to \lambda \in [0,+\infty] \mbox{ as } N\to\infty \numberthis\label{eq:scale_formula},
	\\ &\psi(N)=\Big( \frac{N^{2-\alpha}}{\phi(N)} \Big)^2 \mbox{ for all } N\in\NN. \numberthis\label{eq:renormalization_formula}
\end{align*}
Compared to the case of the introduction, taking into account all directions of noncompactness in $S \subset \CC \times \RR$, the need for two new integer parameters $N'$ and $N''$ emerges. We are interested in the multi-index sequence of \emph{empirical pair correlation measures} whose formula is given for all $N,N',N'' \in \NN-\{0\}$ by
\begin{align*}
\R_{N,N',N''}^{\alpha,\Lambda} & = \frac{1}{(N'+N'')\psi(N)} \sum_{\substack{m,n \in \Lambda_S, \; n \neq m \\ |\pi_\RR(n-m)| < 2\pi \\ 0 < |\pi_\CC(m)|,|\pi_\CC(n)| \leq N \\ -2\pi N' \leq \pi_\RR(m), \pi_\RR(n) < 2 \pi N''}} \Delta_{\phi(N)(\pi_\CC(\Pow_\alpha(n)-\Pow_\alpha(m)))}
\\ & = \frac{1}{(N'+N'')\psi(N)} \sum_{\substack{m,n \in \Lambda, \, n \neq m \\ 0 < |m|,|n| \leq N}}  \, \sum_{\substack{ r \in \exp^{-1}(m), \; s \in \exp^{-1}(n) \\ |\Im(r)-\Im(s)| < 2 \pi \\ -2\pi N' \leq \Im(r), \Im(s) < 2\pi N''}} \Delta_{\phi(N)(\exp(\alpha s) - \exp(\alpha r))}.\numberthis\label{eq:def_empirical_pair_cor_riemsurf}
\end{align*}
Let $\covol_{\vec{\Lambda}}$ be the covolume of $\vec{\Lambda}$, i.e.~the area of any fundamental parallelogram of $\vec{\Lambda}$. Set $\rho_{\alpha,\vec{\Lambda},\lambda}$ the nonnegative measurable function of formula
$$
\rho_{\alpha,\vec{\Lambda},\lambda} : z \mapsto \left\{
\begin{array}{cll}
	\displaystyle 0 & \mbox{ if } & \lambda = +\infty,\vspace{2mm}\\
	
	\displaystyle \frac{\pi}{\alpha^2(2-\alpha)\covol_{\vec{\Lambda}}^2} & \mbox{ if } & \lambda=0, \vspace{2mm}\\
	
	\displaystyle \frac{\alpha ^\frac{2}{1-\alpha}}{(1-\alpha) \covol_{\vec{\Lambda}}} \Big(\frac{|z|}{\lambda}\Big)^{-\frac{4-2\alpha}{1-\alpha}} \sum_{\substack{p\in\vec{\Lambda} \\ |p| \leq \frac{|z|}{\alpha\lambda}}} |p|^{\frac{2}{1-\alpha}} & \mbox{ if } & \lambda \in \, ]0,+\infty[\, .
\end{array} \right.
$$
The next two results will be proven at the end of Section \ref{sec:remove_branch_cut}.
\begin{theorem}\label{th:main}
We have the vague convergence, as $\min \{N, \, N'+N''\} \to\infty$,
$$
\R_{N,N',N''}^{\alpha,\Lambda} \weakstar \rho_{\alpha,\vec{\Lambda},\lambda} \, \Leb_\CC.
$$
\end{theorem}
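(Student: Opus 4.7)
The plan is to reduce Theorem \ref{th:main} to the separated-levels statement Theorem \ref{th:effective_cv_complex_correlations}, adding back the pairs of lifts $(r,s) \in \Lambda_S^2$ that straddle a branch cut of the universal cover. Fix a test function $f \in C_c(\CC)$ supported in some disk $D(0,A)$. In the definition \eqref{eq:def_empirical_pair_cor_riemsurf} of $\R_{N,N',N''}^{\alpha,\Lambda}$ the sum runs over ordered pairs $(r,s) \in \Lambda_S^2$ with $0 < |\pi_\CC(r)|, |\pi_\CC(s)| \leq N$, $\pi_\RR(r), \pi_\RR(s) \in [-2\pi N', 2\pi N'')$ and $|\pi_\RR(r)-\pi_\RR(s)| < 2\pi$. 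I would partition $[-2\pi N', 2\pi N'')$ into the $N'+N''$ levels $I_k = [2\pi k, 2\pi(k+1))$ for $-N' \leq k < N''$, and split each contributing pair as either a \emph{same-level pair} (both $\pi_\RR(r)$ and $\pi_\RR(s)$ lie in the same $I_k$) or a \emph{cross-level pair} (they lie in two consecutive strips $I_k, I_{k+1}$; only these two cases are possible because $|\pi_\RR(r) - \pi_\RR(s)| < 2\pi$).

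For the same-level part, Theorem \ref{th:effective_cv_complex_correlations} applies level by level and, after averaging over the $N'+N''$ strips, yields the convergence of this portion of $\R_{N,N',N''}^{\alpha,\Lambda}(f)$ to $\int_\CC f(z)\,\rho_{\alpha,\vec{\Lambda},\lambda}(z)\, d\Leb_\CC(z)$. The averaging over $k$ is essential: as Figure \ref{fig:correlpairs_nonrotinv_exotic_alpha2342} already suggests, the empirical correlation on a single level is not rotation invariant, yet up to a rotation by $e^{2\pi i \alpha k}$ the measure on level $k$ coincides with the one on level $0$; summing in $k$ and dividing by $N'+N''$ produces the radial density $\rho_{\alpha,\vec{\Lambda},\lambda}$. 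This step also identifies the three regimes: for $\lambda = 0$ the scaling is too weak to resolve individual gaps and one obtains a constant density; for $\lambda \in \,]0,+\infty[\,$ the sum over $p \in \vec{\Lambda}$ in $\rho_{\alpha,\vec{\Lambda},\lambda}$ captures the preserved combinatorics of small gaps; for $\lambda = +\infty$ the support constraint on $f$ rules out all but a negligible set of pairs.

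For the cross-level part, I would invoke the counting lemma announced in Section \ref{sec:remove_branch_cut}: the number of cross-level pairs whose rescaled complex difference lies in $\supp f$, divided by the normalization $(N'+N'')\psi(N)$, tends to $0$ uniformly in $N', N''$ thanks to the scaling relations \eqref{eq:renormalization_formula} and \eqref{eq:scale_formula}. Summing the two contributions yields $\R_{N,N',N''}^{\alpha,\Lambda}(f) \to \int_\CC f\,\rho_{\alpha,\vec{\Lambda},\lambda}\, d\Leb_\CC$, which is the desired vague convergence.

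The main obstacle is the cross-level count itself: one must estimate the number of pairs $(m,n) \in \Lambda^2$ with $|m|,|n| \leq N$ whose fractional powers land simultaneously in a branch-cut neighborhood and within distance $A/\phi(N)$ of each other in $\CC$. The preimage under $z \mapsto z^\alpha$ of a small disk in $\CC$ is a curvilinear tube whose dimensions depend on $|m|$, so the count is delicate and is precisely what the dedicated lattice-point estimate of Section \ref{sec:remove_branch_cut} is designed for. Once available, plugging it into the decomposition above yields Theorem \ref{th:main} directly.
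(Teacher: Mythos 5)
Your proposal follows essentially the same route as the paper: decompose $\R_{N,N',N''}^{\alpha,\Lambda}$ into the level-separated measure $\R_{N,N',N''}^{\alpha,\Lambda,\lvl}$ plus the cross-level (branch-cut-straddling) contribution, invoke Theorem \ref{th:effective_cv_complex_correlations} for the former, and bound the latter by the counting estimate of Lemma \ref{lem:counting_remove_cut}, which is exactly the decomposition recorded in Equation \eqref{eq:diff_pair_correl_alpha_irrat}. The only minor slip is that Theorem \ref{th:effective_cv_complex_correlations} requires $f \in C_c^1(\CC)$ rather than merely $C_c(\CC)$, so one should either fix $f\in C_c^1$ and approximate at the end, or cite the already-derived qualitative Theorem \ref{th:cv__complex_correlations} directly.
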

\begin{figure}[ht]
	\centering
	\includegraphics[height=9cm]{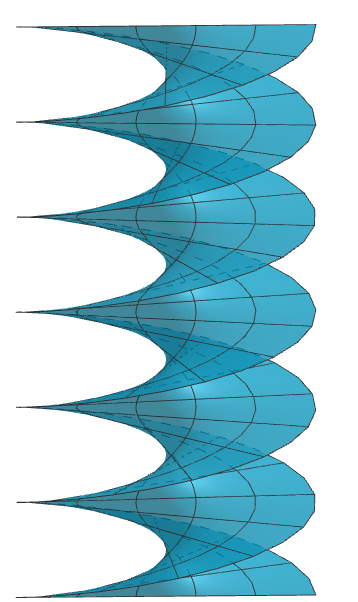}
	\includegraphics[height=9cm]{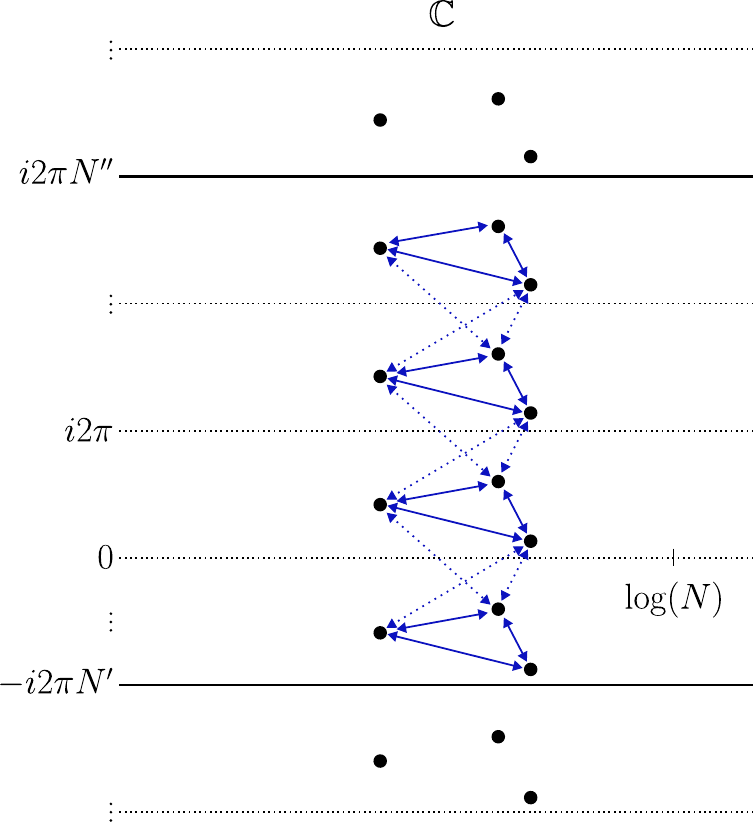}
	\caption{On the left, the helicoidal Riemann surface $S$. On the right, an illustration of the points (dots) $r \in \exp^{-1}(\Lambda) \subset \CC$. The (dotted and plain) two-headed arrows correspond to pairs of grid points appearing in the definition of the empirical pair correlation measure $\R_{N,N',N''}^{\alpha,\Lambda}$. The distinction between dotted and plain two-headed arrows will be explained before Theorem \ref{th:cv__complex_correlations}.}
	\label{fig:grid_in_riem_surf}
\end{figure}

For an error term in this convergence, see Remark \ref{rk:error_term_main}. In the case $\alpha \in \, ]0,1[ \, \cap \QQ$, we write its irreducible form $\alpha = \frac{a}{b}$ and obtain the following result.
\begin{theorem}\label{th:main_rat}
We have the vague convergence, as $N \to\infty$,
$$
\R_{N,0,b}^{\frac{a}{b},\Lambda} \weakstar \rho_{\frac{a}{b},\Lambda} \, \Leb_\CC.
$$
\end{theorem}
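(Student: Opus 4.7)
The strategy is to deduce Theorem \ref{th:main_rat} from Theorem \ref{th:main} by exploiting the periodicity afforded by the rationality of $\alpha$. When $\alpha=a/b$ with $\gcd(a,b)=1$, one has $\exp(\alpha(z+2\pi i b))=\exp(\alpha z)\,e^{2\pi i a}=\exp(\alpha z)$, so the summand in the definition of $\R_{N,N',N''}^{a/b,\Lambda}$ is invariant under the diagonal shift $T:(r,s)\mapsto (r+2\pi i b,s+2\pi i b)$. This will let me transfer information between $\R_{N,0,b}^{a/b,\Lambda}$, whose imaginary range is fixed, and $\R_{N,0,bK}^{a/b,\Lambda}$ for large $K$, where Theorem \ref{th:main} applies.

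For any $K\in\NN-\{0\}$ and any $f\in C_c(\CC)$, I would partition the range $\Im(r),\Im(s)\in[0,2\pi bK)$ into $K$ disjoint blocks of height $2\pi b$. By $T$-invariance each within-block contribution equals $b\psi(N)\,\R_{N,0,b}^{a/b,\Lambda}(f)$; in addition the constraint $|\Im(r)-\Im(s)|<2\pi$ admits pairs straddling two adjacent blocks, and by $T$-invariance each of the $K-1$ such junctions yields the same cross-block value $C(N;f)$. One obtains
\[
	\R_{N,0,bK}^{a/b,\Lambda}(f)=\R_{N,0,b}^{a/b,\Lambda}(f)+\frac{(K-1)\,C(N;f)}{bK\,\psi(N)}.
\]
Selecting $K=K(N)\to\infty$ with $\min\{N,bK(N)\}\to\infty$, Theorem \ref{th:main} gives $\R_{N,0,bK(N)}^{a/b,\Lambda}(f)\to\int_\CC f\,\rho_{a/b,\vec{\Lambda},\lambda}\,d\Leb_\CC$, so everything reduces to identifying the asymptotic behaviour of $C(N;f)/\psi(N)$.

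Unfolding the definition, $C(N;f)$ enumerates pairs $(m,n)\in\Lambda^2$ weighted by $f$ at a ``twisted'' scaled difference $\phi(N)(\zeta^d n^{a/b}-m^{a/b})$, with $\zeta=e^{2\pi i a/b}$ and a nonzero power $d$, corresponding to $r$ and $s$ on opposite sides of a block boundary. Using the counting lemmas, local changes of variable, and Riemann-sum approximations that underlie the proof of Theorem \ref{th:effective_cv_complex_correlations} in Section \ref{sec:proof_thm}, I expect $C(N;f)/\psi(N)$ to admit an explicit limit; combined with the above decomposition this yields the limit of $\R_{N,0,b}^{a/b,\Lambda}(f)$ and matches it to the target density $\rho_{a/b,\Lambda}$ of Theorem \ref{th:main_rat}. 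The main obstacle will be the precise identification of this junction limit: the twisted branch factor $\zeta^d$ modifies which pairs $(m,n)$ effectively contribute, so extracting its density and reconciling it with the claimed formula for $\rho_{a/b,\Lambda}$ requires a careful reuse of the level-counting and unfolding machinery developed in Sections \ref{sec:lemmas} and \ref{sec:proof_thm}.
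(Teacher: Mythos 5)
Your strategy of exploiting $b$-periodicity and then appealing to Theorem~\ref{th:main} is the right instinct, and your block decomposition
$$\R_{N,0,bK}^{a/b,\Lambda}(f)=\R_{N,0,b}^{a/b,\Lambda}(f)+\frac{(K-1)\,C(N;f)}{bK\,\psi(N)}$$
is arithmetically correct. But you leave the junction term unresolved, and worse, you seem to expect $C(N;f)/\psi(N)$ to converge to a nontrivial density that would then have to be \emph{reconciled} with $\rho_{a/b,\Lambda}$. That expectation is wrong: for Theorem~\ref{th:main_rat} to hold with the decomposition you derived, one must show $C(N;f)/\psi(N)\to 0$. The junction sum $C(N;f)$ is precisely a count of pairs whose two lifts lie in adjacent $2\pi$-strips of the universal cover (one lift just below, one just above a branch line), i.e.\ exactly the pairs controlled by the paper's Lemma~\ref{lem:counting_remove_cut}. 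That lemma gives $\card(I_{N,A,k})=\bigO_{\alpha,\Lambda}\big(N(\frac{AN^{1-\alpha}}{\phi(N)}+1)^3\big)$, which is $o(\psi(N))$ in every regime with $\lambda\neq+\infty$. Without this estimate your proof is not complete; with it, the argument closes, but you have essentially reproduced the paper's Section~\ref{sec:remove_branch_cut}.

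The paper reaches the same conclusion by a shorter route: it works with the level-separated measure $\R^{\alpha,\Lambda,\lvl}_{N,0,b}$ rather than $\R^{\alpha,\Lambda}_{N,0,b}$. For rational $\alpha=a/b$ the level-separated measure satisfies the \emph{exact} identity $\R^{a/b,\Lambda,\lvl}_{N,0,kb}=\R^{a/b,\Lambda,\lvl}_{N,0,b}$ for every $k$ (Remark~\ref{rk:effective_main_rat}), because by construction $\R^{\lvl}$ only ever pairs lifts on the same level, so there are no cross-block junction terms at all. Theorem~\ref{th:effective_cv_complex_correlations} applied with $N''=k(N)b\to\infty$ then gives convergence of $\R^{a/b,\Lambda,\lvl}_{N,0,b}$ directly, and Lemma~\ref{lem:counting_remove_cut} is invoked once, at the end, to convert this into the statement about $\R^{a/b,\Lambda}_{N,0,b}$. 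So the two proofs use the same two ingredients (periodicity and the cross-level counting bound), but the paper applies them to the object ($\R^{\lvl}$) for which the periodicity is exact, whereas you apply them to $\R$, where the periodicity is only exact up to the junction correction you would then have to control anyway.
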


\begin{remark} Theorem \ref{th:main_rat} is not an immediate consequence of Theorem \ref{th:main} since $N'+N''=0+b$ does not go to infinity.
\end{remark}

\subsection{Separation into levels}\label{ssec:levels}
We use the notation $\RR_+=[0,+\infty[ \,$. For every real number $\beta$, every integer $k$ and every nonzero complex number $z$, we begin by defining the \emph{level-$k$ $\beta$ power of $z$} as
$$
z^{[\beta,k]} = |z|^\beta e^{i\beta \omega_k}, \; \mbox{ where $\omega_k$ is the representative in $[2\pi k, \, 2\pi(k+1)[\,$ of $\arg(z)$}.
$$
In other words, for every $z \in \CC-\RR_+$, we set $z^{[\beta,k]} = e^{\beta (\log(z)+i2\pi k)}$, where the map $\log : \CC-\RR_+ \mapsto \CC$ is the branch of the logarithm with branch cut $\RR_+$ and verifying $\log(-1)=i\pi$, and we extend this definition to $\CC^*$ in an "upper" continuous way, namely when $\Im(z) \geq 0$. For the particular case $k=0$, we use the notation $z^\beta=z^{[\beta,0]}$. This nonstandard choice of branch cut is handy for the following formula: for all $z,z' \in \CC^*$ and all $k\in\ZZ$,
$$ \frac{z^{[\beta,k]}}{z'^{[\beta,k]}} = \Big( \frac{z}{z'}\Big)^\beta \mbox{ or } \Big( \frac{z}{z'}\Big)^{[\beta,-1]},$$
depending on the sign of the difference $\omega-\omega'$ of the argument representatives $\omega$ of $z$ and $\omega'$ of $z'$, both taken in $[0,2\pi[\,$. In comparison, taking the principal branch of the logarithm to define these power functions would have required to separate between $3$ cases, whether the difference $\omega - \omega'$ belongs to $]-2\pi, -\pi]$, $]-\pi,\pi]$ or $]\pi, 2\pi]$. With the formula $z^\beta = e^{\beta \log(z)}$, we obtain the linear approximation, as $z \to 0$ with the restriction $\Im(z) \geq 0$,
\begin{equation}\label{eq:linear_approx_puiss_alph_half_space}
	(1+z)^\alpha = 1 + \alpha z + \bigO_\alpha(|z|^2).
\end{equation}
Note that the image of $\CC^*$ by the level-$k$ $\beta$ power function $z \mapsto z^{[\beta,k]}$ is the semi open angular sector $\{ z \in \CC^* \, : \, \arg(z) \in [2\pi k\beta, 2\pi (k+1)\beta[ \mod 2\pi \}$, in other words the sector of angle $2\beta\pi$ centred at the argument $2\pi(k+\frac{1}{2})\beta \mod 2\pi$.

We define the multi-index sequence of \emph{level separated empirical pair correlation measures} by
\begin{equation}\label{eq:def_correl_pair_lvl}
	\R_{N,N',N''}^{\alpha,\Lambda,\lvl} = \frac{1}{(N'+N'')\psi(N)} \sum_{k=-N'}^{N''-1} \sum_{\substack{n, m \in \Lambda, \, n\neq m \\ 0 < |n|,|m|\leq N}} \Delta_{\phi(N)(n^{[\alpha,k]} - m^{[\alpha,k]})}.
\end{equation} 
In comparison to the definition $\R_{N,N',N''}^{\alpha,\Lambda}$ from the beginning of Section \ref{sec:lemmas}, in the measure $\R_{N,N',N''}^{\alpha,\Lambda,\lvl}$ we do not take into account pairs of points illustrated with dotted arrows in Figure \ref{fig:grid_in_riem_surf}. Recall that the scaling and renormalization factors $\phi$ and $\psi$ verify the convergence \eqref{eq:scale_formula} and the formula \eqref{eq:renormalization_formula}.

\begin{theorem}\label{th:cv__complex_correlations}
	We have the following vague convergence of positive measures, as $\min \{N, \, N'+N''\} \to\infty$,
	$$ \R_{N,N',N''}^{\alpha,\Lambda,\lvl} \weakstar \rho_{\alpha,\vec{\Lambda},\lambda} \, \Leb_\CC.$$
\end{theorem}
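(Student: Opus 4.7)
Fix $f \in C_c(\CC)$ with support in $D(0,A)$. The starting point is the rotation identity $z^{[\alpha,k]} = e^{2\pi i\alpha k}\, z^\alpha$ valid for every $z \in \CC^*$, so that $n^{[\alpha,k]} - m^{[\alpha,k]} = e^{2\pi i\alpha k}(n^\alpha - m^\alpha)$. Setting $K = N'+N''$ and $z_{n,m} := \phi(N)(n^\alpha - m^\alpha)$,
\begin{equation*}
\R_{N,N',N''}^{\alpha,\Lambda,\lvl}(f) = \frac{1}{\psi(N)}\sum_{\substack{n,m\in\Lambda,\, n\neq m\\ 0 < |n|,|m|\leq N}} \tilde f^{(K)}(z_{n,m}), \quad \tilde f^{(K)}(z) := \frac{1}{K}\sum_{k=-N'}^{N''-1} f(e^{2\pi i\alpha k} z).
\end{equation*}
By Weyl equidistribution (when $\alpha \notin \QQ$) or by cycling through $b$-th roots of unity (when $\alpha = a/b$), $\tilde f^{(K)}$ stabilizes as $K\to\infty$ uniformly on compact sets. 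This angular averaging is crucial both to cancel the anisotropy of single-level sums, illustrated in Figure \ref{fig:correlpairs_nonrotinv_exotic_alpha2342}, and to recover a factor $(1-\alpha)$ in the prefactor of $\rho_{\alpha,\vec\Lambda,\lambda}$, as will appear below.

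I would next treat the $k=0$ inner double sum via the substitution $p = n - m \in \vec\Lambda\setminus\{0\}$. For pairs in a common branch-cut sector (the only ones the level-separated measure counts), \eqref{eq:linear_approx_puiss_alph_half_space} yields
\begin{equation*}
n^\alpha - m^\alpha = m^\alpha\bigl((1+p/m)^\alpha - 1\bigr) = \alpha\, m^{\alpha-1}p + \bigO_\alpha\bigl(|p|^2 |m|^{\alpha-2}\bigr),
\end{equation*}
so that fixing $p$ leaves a sum of $g_p(m) := f(\alpha\phi(N)m^{\alpha-1}p)$ over $m \in \Lambda$ with $|m|\leq N$, up to controlled errors. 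The support condition forces $|p| \leq AN^{1-\alpha}/(\alpha\phi(N))$, whose limit $A/(\alpha\lambda)$ dictates the three cases of $\rho_{\alpha,\vec\Lambda,\lambda}$: no admissible $p$ when $\lambda = +\infty$, an unbounded index window when $\lambda = 0$, and a finite window $|p| \leq |u|/(\alpha\lambda)$ when $\lambda \in (0,+\infty)$. Approximating $\sum_m g_p(m) \approx \covol_{\vec\Lambda}^{-1}\int g_p(m)\,dm$ by a lattice Riemann sum and then performing the holomorphic change of variable $u = \alpha\phi(N)\, p\, m^{\alpha-1}$, whose real Jacobian evaluates to $(\alpha\phi(N)|p|)^{2/(1-\alpha)}(1-\alpha)^{-2}|u|^{-(4-2\alpha)/(1-\alpha)}$, converts the integral into an integral of $f$ against $|u|^{-(4-2\alpha)/(1-\alpha)}\,du$ over the image sector $\Sigma_0$ of angular width $2\pi(1-\alpha)$. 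Using $\phi(N) \sim \lambda N^{1-\alpha}$ to simplify $\psi(N)^{-1}\phi(N)^{2/(1-\alpha)+2}$ to $\lambda^{(4-2\alpha)/(1-\alpha)}$, and translating $|m|\leq N$ in $u$-coordinates into $|p|\leq |u|/(\alpha\lambda)$, produces the exotic lattice sum $\sum_{|p|\leq|u|/(\alpha\lambda)}|p|^{2/(1-\alpha)}$ that appears in $\rho_{\alpha,\vec\Lambda,\lambda}$; in the case $\lambda = 0$, a further polar approximation converts this unbounded lattice sum into a continuous integral, yielding the announced constant density after elementary simplification.

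Finally, replacing $f$ by $\tilde f^{(K)}$ in the integral over $\Sigma_0$ is equivalent to integrating $f$ over the rotated sectors $e^{2\pi i\alpha k}\Sigma_0$ and averaging. Since the angles $2\pi\alpha k\bmod 2\pi$ equidistribute (for irrational $\alpha$) or cover each of the $b$ equi-spaced sectors exactly once (for $\alpha=a/b$), each point of $\CC$ is covered with average multiplicity $(1-\alpha)$, converting the prefactor $(1-\alpha)^{-2}$ from the Jacobian into the $(1-\alpha)^{-1}$ of $\rho_{\alpha,\vec\Lambda,\lambda}$ and producing the full rotational symmetry of the limit. The main difficulties I expect lie in uniform control of the errors: the linearization \eqref{eq:linear_approx_puiss_alph_half_space} degrades as $m$ approaches $0$ or the branch cut $\RR_+$; the Riemann-sum discrepancy depends on $p$, and the set of admissible $p$'s is unbounded when $\lambda = 0$; and the equidistribution error for $(2\pi\alpha k \bmod 2\pi)_k$ is of a different arithmetic nature depending on whether $\alpha \in \QQ$. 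The counting lemmas of Section \ref{sec:lemmas}, combined with the branch-cut treatment of Section \ref{sec:remove_branch_cut}, should yield both the convergence and the effective rate.
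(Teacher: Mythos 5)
Your strategy matches the paper's at every major step — rotation identity, substitution $p=n-m$, linear approximation, lattice Riemann sum, the change of variable $u = \alpha\phi(N) p\, m^{\alpha-1}$ with Jacobian $(1-\alpha)^{-2}|u|^{-(4-2\alpha)/(1-\alpha)}$, and angular averaging to restore rotational symmetry. Moving the angular averaging to the front via $\tilde f^{(K)}$ is a genuine (and arguably cleaner) reorganization: the paper instead changes variables on each level $k$ separately and only averages at the very end, after Lemma \ref{lem:symmetry_angular_sector} reassembles the half-sectors $HC_{p,k}\cup(-HC_{-p,k})$ into the full sector $C_{p,k}$ of width $2\pi(1-\alpha)$. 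Both routes produce the factor $(1-\alpha)^{-2}\cdot(1-\alpha)=(1-\alpha)^{-1}$ in the limit density; the paper's ordering is what makes the error term tractable as a function of $N'+N''$.

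There is, however, one inaccuracy worth flagging. You assert that the level-separated measure only counts ``pairs in a common branch-cut sector'' and then apply the formula $n^\alpha - m^\alpha = m^\alpha\bigl((1+p/m)^\alpha - 1\bigr)$ and the linearization \eqref{eq:linear_approx_puiss_alph_half_space} to all such pairs. In fact $\R_{N,N',N''}^{\alpha,\Lambda,\lvl}$ sums over \emph{all} pairs $n\neq m$ with $0<|n|,|m|\leq N$, and the identity $\frac{(m+p)^{[\alpha,k]}}{m^{[\alpha,k]}}=(1+p/m)^{[\alpha,0]}$ holds only when $\theta(m+p)>\theta(m)$; for $\theta(m+p)<\theta(m)$ one gets $(1+p/m)^{[\alpha,-1]}$ instead, and the estimate \eqref{eq:linear_approx_puiss_alph_half_space} is stated only for $\Im(z)\geq 0$. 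The paper handles this by splitting into $I_N^+$, $I_N^-$ (with the symmetry $(m,p)\mapsto(m+p,-p)$ giving $\R^-=(z\mapsto -z)_*\R^+$, Equation \eqref{eq:symmetry}), plus a separate negligibility estimate for the diagonal terms $\theta(m+p)=\theta(m)$ (Lemma \ref{lem:negligible_diag_points}). You could instead prove a companion linearization $(1+z)^{[\alpha,-1]}=1+\alpha z+\bigO_\alpha(|z|^2)$ for $\Im(z)<0$ and carry the full disk $|m|\leq N$ through the change of variable as you sketch — this is consistent with your image sector $\Sigma_0$ having width $2\pi(1-\alpha)$, i.e.\ the full $C_{p,k}$ rather than the half-sector $HC_{p,k}$ — but you must still isolate the indices near the branch cut (the paper's $I_{N,A}^{\rm bad}$, counted via Lemma \ref{lem:grid_pts_near_line}) where the sign of $\Im(p/m)$ disagrees with the sign of $\theta(m+p)-\theta(m)$, and the aligned indices $m+p\in\RR_+ m$. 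These are exactly the error sources your last paragraph anticipates; without them the linearization step is not justified.
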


\begin{figure}[ht]
	\centering
	\scalebox{0.75}{
		\begin{adjustbox}{clip, trim=6.8cm 1.2cm 7.1cm 3.cm, max width=\textwidth}
			\includegraphics{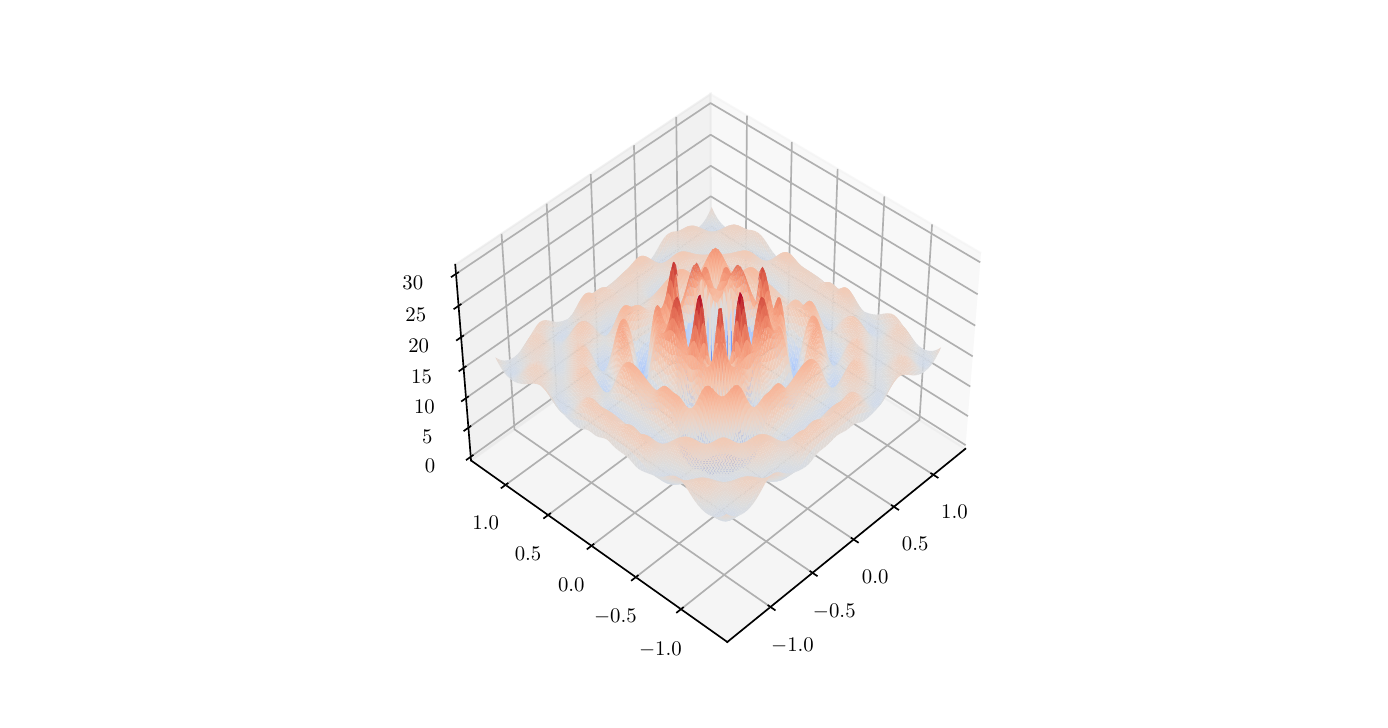}
		\end{adjustbox}
	}
	\scalebox{0.75}{
		\begin{adjustbox}{clip, trim=6.9cm 2cm 7cm 2.8cm, max width=\textwidth}
			\includegraphics{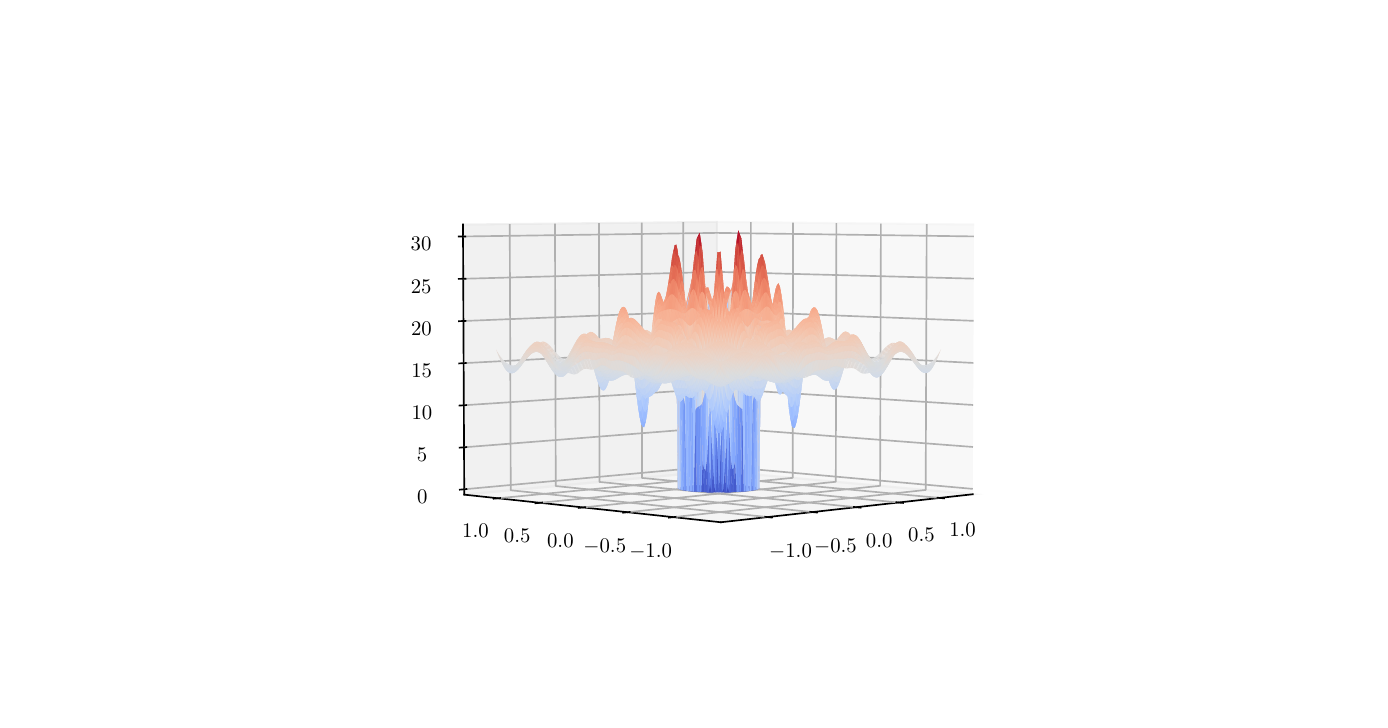}
		\end{adjustbox}
	}
	\caption{The empirical distribution obtained for the measure $\R_{N,N',N''}^{\frac{1}{3},\ZZ[i],\lvl}$ with $N=70$ and $N'+N''=3$ in the case $\lambda=1$, using a smoothing process of the library SciPy of Python.}
	\label{fig:emp_density_alpha13_exotic_N70}
\end{figure}

A qualitative illustration of this convergence is shown by comparing Figure \ref{fig:emp_density_alpha13_exotic_N70} to Figure \ref{fig:th_density_alpha13}, in the exotic case $\lambda=1$. Since the modulus function $|\cdot|$ from $\CC$ to $\RR_+$ is continuous and proper and since the function $\rho_{\alpha,\vec{\Lambda},\lambda}$ is invariant under rotation, the hypotheses of Theorem \ref{th:cv__complex_correlations} also imply the vague convergence, as the minimum $\min\{N,N'+N''\} \to \infty$,
$$
\frac{1}{(N'+N'')\psi(N)} \sum_{k=-N'}^{N''-1} \sum_{\substack{n, m \in \Lambda, \, n\neq m \\ 0 < |n|,|m|\leq N}} \Delta_{\phi(N)|n^{[\alpha,k]} - m^{[\alpha,k]}|} \weakstar 2\pi r \rho_{\alpha,\vec{\Lambda},\lambda}(r) dr.
$$
As an illustration of the latter convergence, a radial profile is drawn on Figure \ref{fig:radial_profile_alpha13_differentN}, in the exotic case $\lambda=1$.
\begin{figure}[ht]
	\centering
	\scalebox{0.98}{
	\begin{adjustbox}{clip, trim=2.2cm 0.7cm 2.2cm 1.2cm, max width=\textwidth}
		\includegraphics{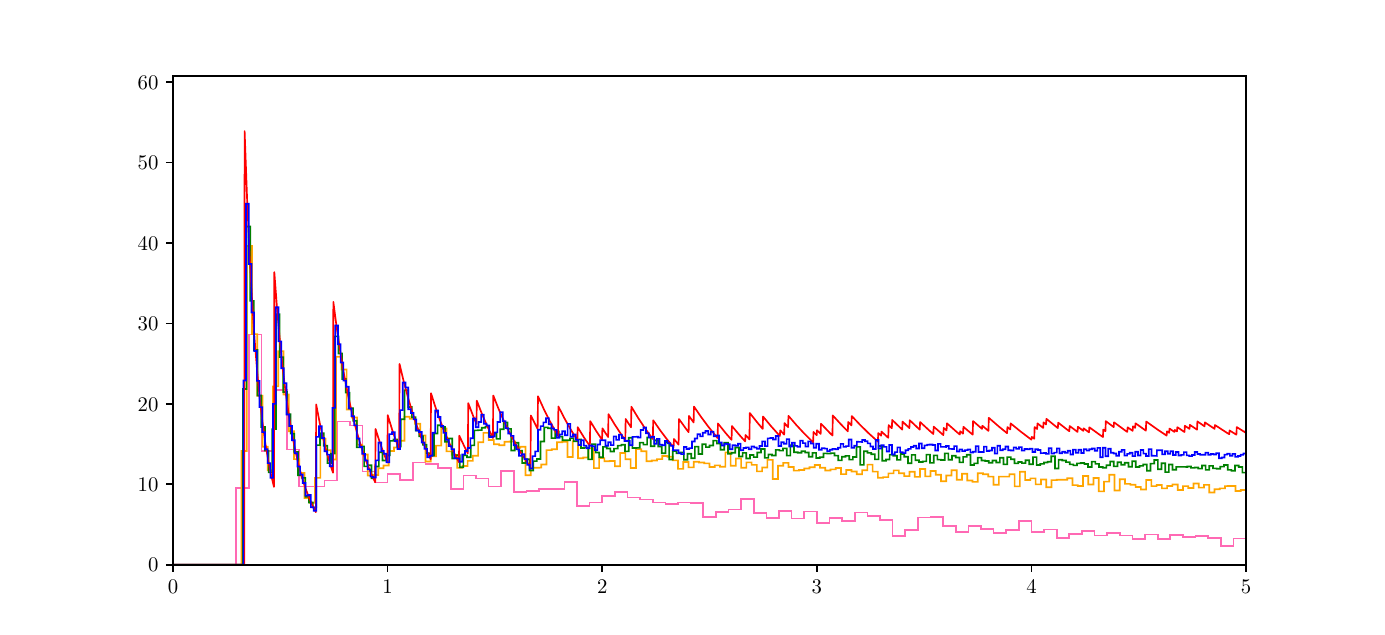}
	\end{adjustbox}
	}
	\caption{The empirical radial distribution of $\R_{N,N',N''}^{\frac{1}{3},\ZZ[i],\lvl}$ for $N'+N''=3$ and different values of $N$ ($N=10$ in \textcolor{magenta}{pink}, $N=30$ in \textcolor{orange}{orange}, $N=50$ in \textcolor{green}{green}, and $N=80$ in \textcolor{blue}{blue}) using the scaling factor $N\mapsto N^{\frac{2}{3}}$ (and renormalization factor $N\mapsto N^2$), and the limit density $r \mapsto \rho_{\frac{1}{3},\ZZ[i],1}(r)$ (in \textcolor{red}{red}).}
	\label{fig:radial_profile_alpha13_differentN}
\end{figure}

We denote by $\diam_{\vec{\Lambda}}$ the minimal diameter over all fundamental parallelograms of $\vec{\Lambda}$ and by $\sys_{\vec{\Lambda}}$ the \emph{systole} (or \emph{Minkowski's first minimum}) of the $\ZZ$-lattice $\vec{\Lambda}$, that is to say
$$\sys_{\vec{\Lambda}} = \min\{|p| \, : \, p \in \vec{\Lambda}, p \neq 0\} > 0.$$
We mention that the diameter $\diam_{\vec{\Lambda}}$ is comparable to the quantity $\frac{\covol_{\vec{\Lambda}}}{\sys_{\vec{\Lambda}}}$ thanks to the second theorem of Minkowski.
\begin{remark}{\rm
In the exotic case $\lambda \in \, ]0,+\infty[\,$, one can notice that we have $\rho_{\alpha,\vec{\Lambda},\lambda}=0$ on the open disk $C(0,\alpha\lambda \sys_{\vec{\Lambda}})$. This property is called a \emph{level repulsion phenomenon}. The fact that the radius $\alpha\lambda \sys_{\vec{\Lambda}}$ of this level repulsion disk converges to $+\infty$ as $\lambda \to +\infty$ can be interpreted as a continuity result between the cases $\lambda \in \, ]0,+\infty[\,$ and $\lambda=+\infty$. Such a continuity observation may also be made between the cases $\lambda \in \, ]0,+\infty[\,$ and $\lambda=0$, since Gauss counting argument (more precisely, its version for $\beta=\frac{2}{1-\alpha}$ stated in Lemma \ref{lem:sum_powers_grid}) indicates that, for all $\lambda \in \, ]0,+\infty[\,$,
$$ \rho_{\alpha,\vec{\Lambda},\lambda}(z) \underset{|z|\to \infty}{\longrightarrow} \frac{\pi}{\alpha^2(2-\alpha)\covol_{\vec{\Lambda}}^2}.$$
}
\end{remark}

\begin{remark}{\rm
Notice that $\rho_{\alpha,\vec{\Lambda},\lambda}$ is rotation invariant and, if $\lambda \in \, ]0,+\infty[\,$, the points of discontinuity of $\rho_{\alpha,\vec{\Lambda},\lambda}$ constitute the union of circles $\bigcup_{p\in\vec{\Lambda}-\{0\}} C(0, \alpha\lambda|p|)$. By comparison, extending the definition of $\R_{N,N',N''}^{\alpha,\Lambda,\lvl}$ to the simplistic case $\alpha=1$, choosing the scaling factor $N\mapsto 1$ (hence $\lambda=1$) and the renormalization factor $N \mapsto N^2$, a standard Gauss argument and a Riemann sum approximation grants the vague convergence, as $N \to \infty$,
$$ \R_{N,N',N''}^{1,\Lambda,\lvl} = \frac{1}{N^2} \sum_{\substack{n, m \in \Lambda, \, n\neq m \\ 0 < |n|,|m|\leq N}} \Delta_{n-m} \weakstar \frac{\pi}{\covol_{\vec{\Lambda}}} \sum_{p\in\vec{\Lambda}-\{0\}} \Delta_p.$$
In particular, the limit measure is not rotation invariant: we lose some symmetry in this extreme case $\alpha=1$.
}
\end{remark}

\begin{remark}{\rm
Upon an appropriate rescaling in terms of $\alpha$, a continuity statement can be made between the cases $\alpha \in \, ]0,1[\,$ and $\alpha=0$. We impose the scaling factor $\phi(N)=N^{1-\alpha}$ (hence $\lambda=1$) for this remark. Up to rotation, we can assume that the grid $\Lambda$ contains no nonzero point on the branch cut $\RR_+$ of the $\log$ function involved in the definition of $\alpha$-powers with levels. For all $k\in\ZZ$, all $n,m$ nonzero grid points in $\Lambda$ and all integer $N \in\NN$, notice that we have the convergence, as $\alpha \to 0^+$,
\begin{equation}\label{eq:link_alpha_power_log}
	\frac{1}{\alpha} N^{1-\alpha} (n^{[\alpha,k]}-m^{[\alpha,k]}) \longrightarrow N(\log(n)-\log(m)).
\end{equation}
We set
$$
\R_{N}^{\Lambda,\log} = \frac{1}{N^2} \sum_{\substack{n, m \in \Lambda, \, n \neq m \\ 0 < |n|,|m|\leq N}} \Delta_{N(\log(n)-\log(m))}.
$$
which is (up to the choice of a branch cut for the logarithm function) the empirical pair correlation measure studied in \cite[§~3]{paulinparkko2024_pairs_complexlog} for logarithm of grid points. Using Theorem \ref{th:cv__complex_correlations} and the fact that $z \mapsto \frac{z}{\alpha}$ is continuous and proper for the top convergence arrow, the convergence \eqref{eq:link_alpha_power_log} for the left-hand convergence arrow, and the dominated convergence theorem for the right-hand convergence arrow, we obtain the following diagram of vague convergence:
$$
\begin{array}{ccc}
	(z \mapsto \frac{z}{\alpha})_* \R_{N,N',N''}^{\alpha,\Lambda,\lvl} & \underset{\min(N,N'+N'')\to \infty}{\overset{\hbox{\hspace{0.15cm}$*$}}{\scalebox{2}[1]{$\rightharpoonup$}}} & (z \mapsto \frac{z}{\alpha})_* \rho_{\alpha,\vec{\Lambda},\lambda}\Leb_\CC = \alpha^2 \rho_{\alpha,\vec{\Lambda},\lambda}(\alpha z) \, dz \vspace{2mm}
	\\ \vcenter{\hbox{\scalebox{0.9}{$\overset{\alpha}{\underset{0^+}{\downarrow}}$}\vspace{0.17cm}}} \scalebox{1}[2]{$\downharpoonright$}* & & \vcenter{\hbox{\scalebox{0.9}{$\overset{\alpha}{\underset{0^+}{\downarrow}}$}\vspace{0.17cm}}} \scalebox{1}[2]{$\downharpoonright$}* \vspace{2mm}
	\\ \R_{N}^{\Lambda,\log} & &  \frac{|z|^4}{\covol_{\vec{\Lambda}}} \sum_{\substack{p\in\vec{\Lambda} \\ |p| \leq |z|}} |p|^2 \, dz.
\end{array}
$$
The bottom convergence arrow missing to this diagram has been proven in \cite[Theo.~3.1]{paulinparkko2024_pairs_complexlog}.
}
\end{remark}

In order to state an effective version of Theorem \ref{th:cv__complex_correlations}, we will use the space $C_c^1(\CC)$ of continuously differentiable functions of two real variables $f:\CC \to \CC$, with the standard notations $\|f\|_\infty=\sup_{z\in\CC}|f(z)|$ and $\|df\|_\infty = \sup_{z\in\CC}\|df(z)\|$, where $\|\cdot\|$ is the operator norm on the space of $\RR$-linear applications from $\CC$ to $\CC$. We use Landau's notation: for two sets of parameters $\P$ and $\P'$ with $\P' \subset \P$, for functions $F,G : \NN \mapsto \CC$ depending on (at least) the parameters in $\P'$, we write $F(N)=\bigO_{\P'}(G(N))$ if there exists some constant $c_{\P'} >0$, depending only on $\P'$, and some integer $N_0$, depending on all the parameters in $\P$, such that, for all $N \geq N_0$, we have the inequality $|F(N)| \leq c_{\P'} |G(N)|$. In our study, each time we will use Landau's notation, a test function $f \in C_c^1(\CC)$ will have been fixed, a bound $A$ on the size of its support will have been taken (i.e.~$\supp f \subset D(0,A)$) and our sets of parameters will always be $\P=\{\alpha, \Lambda, \phi, \psi, A\}$ and $\P'=\{\alpha\}$, $\{\Lambda\}$ or $\{\alpha, \Lambda\}$ (hence using the notation $\bigO_\alpha$, $\bigO_{\Lambda}$ or $\bigO_{\alpha,\Lambda}$). It is important to keep in mind that the rank $N_0$ may only depend on the parameters in $\P$. In particular, it does not depend on the parameters $N'$, $N''$, $\|f\|_\infty$, $\|df\|_\infty$, nor on any other index temporarily fixed in the proof of a lemma or a theorem.

For all $f \in C_c^1(\CC)$ and $A>1$, if $\lambda=+\infty$ we set $\Err_{{Th.\ref{th:effective_cv_complex_correlations}}}(\alpha,\Lambda,f,A)=0$, and otherwise we define
{\footnotesize
\begin{align*}
	& \Err_{{Th.\ref{th:effective_cv_complex_correlations}}}(\alpha,\Lambda,f,A)
	\\[2mm] = \; & \left\{
	\begin{array}{ll}
		\bigO_{\alpha,\Lambda} \big( A^4(\|f\|_\infty + \|df\|_\infty) \big( \frac{\phi(N)}{N^{1-\alpha}} + \frac{1}{N^\alpha \phi(N)} + \frac{1}{N'+N''} \big) \big) & \mbox{if } \lambda=0,  \vspace{3mm}
		\\ \bigO_{\alpha,\Lambda} \big((\|f\|_\infty + \|df\|_\infty)(\lambda+\frac{1}{\lambda})^{\frac{10-8\alpha}{1-\alpha}} \big( A^\frac{8-6\alpha}{1-\alpha} |\frac{\phi(N)}{\lambda N^{1-\alpha}} - 1| + \frac{A^4}{N} + \frac{A^2}{N'+N''} \big) \big) & \mbox{if } \lambda\in \, ]0,+\infty[ \, .
	\end{array}
	\right.
\end{align*}

\begin{theorem}\label{th:effective_cv_complex_correlations}
	Let $f \in C_c^1(\CC)$ and choose $A>1$ such that $\supp f \subset D(0,A)$.
	\begin{itemize}
		\item If $\lambda=+\infty$, then there exists an integer $N_0$ which depends on $\alpha$, $\Lambda$ and $A$, such that for all $N \geq N_0$ and all $N',N'' \in \NN$, we have $\R_{N,N',N''}^{\alpha,\Lambda,\lvl}(f)=0$.
		\item If $\lambda \in [0,+\infty[ \,$, as $N\to\infty$, we have
		$$
		\R_{N,N',N''}^{\alpha,\Lambda,\lvl}(f) = \int_\CC f(z) \rho_{\alpha,\vec{\Lambda},\lambda}(z) \, dz + \Err_{{Th.\ref{th:effective_cv_complex_correlations}}}(\alpha,\Lambda,f,A).
		$$
	\end{itemize}
\end{theorem}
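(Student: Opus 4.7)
\emph{Plan.} The starting observation is that levels decouple multiplicatively: for any $m\in\Lambda\setminus\{0\}$ and $k\in\ZZ$ we have $m^{[\alpha,k]}=e^{i2\pi k\alpha}m^{\alpha}$, hence $n^{[\alpha,k]}-m^{[\alpha,k]}=e^{i2\pi k\alpha}(n^{\alpha}-m^{\alpha})$. Setting $f_k(z)=f(e^{i2\pi k\alpha}z)$, the evaluation $\R_{N,N',N''}^{\alpha,\Lambda,\lvl}(f)$ reduces to a $k$-average (over $k=-N',\dots,N''-1$) of level-$0$ sums $\tfrac{1}{\psi(N)}\sum_{m\neq n} f_k(\phi(N)(n^{\alpha}-m^{\alpha}))$. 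The case $\lambda=+\infty$ is then immediate: reindexing $n=m+p$ with $p\in\vec{\Lambda}\setminus\{0\}$ and using \eqref{eq:linear_approx_puiss_alph_half_space}, the smallest value of $|\phi(N)(n^{\alpha}-m^{\alpha})|$ is of order $\alpha\phi(N)\sys_{\vec{\Lambda}}N^{\alpha-1}$, which tends to $+\infty$; hence, for $N$ large, no pair contributes to $\supp f\subset D(0,A)$ and $\R_{N,N',N''}^{\alpha,\Lambda,\lvl}(f)=0$.

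For $\lambda\in[0,+\infty)$, I will reindex pairs by $(m,p)$ and expand via \eqref{eq:linear_approx_puiss_alph_half_space}: $(m+p)^{\alpha}-m^{\alpha}=\alpha p m^{\alpha-1}+O_{\alpha}(|p|^{2}|m|^{\alpha-2})$. This allows the replacement of $f_k(\phi(N)(n^{\alpha}-m^{\alpha}))$ by $f_k(\alpha\phi(N)p m^{\alpha-1})$ at a Lipschitz cost $\|df\|_\infty\,\phi(N)|p|^2|m|^{\alpha-2}$ per pair. For each fixed $(k,p)$, a Riemann-sum approximation will give
\[
\sum_{m\in\Lambda} f_k(\alpha\phi(N)pm^{\alpha-1})=\frac{1}{\covol_{\vec{\Lambda}}}\int f_k(\alpha\phi(N)pm^{\alpha-1})\,dm+\mathrm{Err}_{\mathrm{Riem}},
\]
where $\mathrm{Err}_{\mathrm{Riem}}$ is bounded by $\diam_{\vec{\Lambda}}\|df\|_\infty$ times the boundary length of the integration domain in $m$-coordinates. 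Then I perform the complex change of variable $z=\alpha\phi(N)pm^{\alpha-1}$, whose inverse $m=(\alpha\phi(N)p/z)^{1/(1-\alpha)}$ has Jacobian
\[
|dm/dz|^{2}=\frac{\alpha^{2/(1-\alpha)}\,\phi(N)^{2/(1-\alpha)}\,|p|^{2/(1-\alpha)}}{(1-\alpha)^{2}\,|z|^{(4-2\alpha)/(1-\alpha)}},
\]
with image the angular sector of width $2\pi(1-\alpha)$ swept out by $(\alpha-1)\arg(m)$ for $\arg(m)\in[0,2\pi)$, intersected with $\{|z|\geq\alpha\phi(N)|p|N^{\alpha-1}\}$.

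The $(1-\alpha)^{-1}$ prefactor of $\rho_{\alpha,\vec{\Lambda},\lambda}$ will emerge from averaging these sector-restricted integrals over $k$: the level-$k$ sector is rotated from the level-$0$ one by $e^{i2\pi k\alpha}$, and for each $z\in\CC^{*}$ the proportion of indices $k\in\{-N',\dots,N''-1\}$ whose sector contains $z$ equals $(1-\alpha)+O(1/(N'+N''))$---exactly when $\alpha=a/b\in\QQ$ by cyclicity through $b$ equispaced rotations, and via quantitative Weyl equidistribution otherwise. Combined with the $(1-\alpha)^{-2}$ from the Jacobian this yields the correct $(1-\alpha)^{-1}$. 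Dividing by $\psi(N)$ converts $\phi(N)^{2+2/(1-\alpha)}/N^{2(2-\alpha)}$ into $\lambda^{(4-2\alpha)/(1-\alpha)}$ via $\phi(N)\sim\lambda N^{1-\alpha}$ (producing an error proportional to $|\phi(N)/(\lambda N^{1-\alpha})-1|$); the constraint $|m|\le N$ becomes $|p|\le|z|/(\alpha\lambda)$ (vacuous if $\lambda=0$), and summing over admissible $p\in\vec{\Lambda}\setminus\{0\}$ reproduces $\rho_{\alpha,\vec{\Lambda},\lambda}(z)$. In the Poissonian regime $\lambda=0$ the $p$-sum is infinite and a Gauss-type counting estimate for $\sum_{|p|\le R}|p|^{2/(1-\alpha)}$ (asymptotic to a known multiple of $R^{(4-2\alpha)/(1-\alpha)}$) will produce the limit constant $\pi/(\alpha^{2}(2-\alpha)\covol_{\vec{\Lambda}}^{2})$ and contribute the extra $1/(N^{\alpha}\phi(N))$ term.

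The hard part will be the careful bookkeeping of all these error terms so that they aggregate into the stated rates. Three interlocking issues require care: (a) the Jacobian is singular at $z=0$ with exponent $(4-2\alpha)/(1-\alpha)$, forcing the large powers $A^{(8-6\alpha)/(1-\alpha)}$ and $(\lambda+\lambda^{-1})^{(10-8\alpha)/(1-\alpha)}$ appearing in the exotic-regime error; (b) the linear-approximation error is genuinely uniform only because the support of $f$ combined with $|z|\geq\alpha\phi(N)|p|N^{\alpha-1}$ forces $|p|\leq A/(\alpha\lambda)$, while $\phi(N)\asymp\lambda N^{1-\alpha}$ forces $|m|$ of order $N$, yielding $|p|/|m|=O(1/N)$; (c) pairs $(m,n)$ for which $p/m$ lies in the lower half-plane with small modulus make $(n/m)^{\alpha}$ jump by $e^{i2\pi\alpha}-1$ (straddling the branch cut of $z\mapsto z^{\alpha}$), violating the linearisation---but a direct geometric count shows such pairs inside $\supp f$ are $O(N)$ (and empty for $\lambda>0$ and $N$ large, since they would force $|m|\lesssim N^{-(1-\alpha)/\alpha}\to 0$), contributing at most an $O(\|f\|_{\infty}/N)$ loss absorbed into the stated $O(A^{4}/N)$ term.
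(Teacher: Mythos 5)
The overall strategy matches the paper's: reindex pairs by $(m,p)$, linearise $(m+p)^\alpha-m^\alpha\approx\alpha p m^{\alpha-1}$, do a Riemann-sum approximation and a change of variable $z=\alpha\phi(N)p m^{\alpha-1}$ with Jacobian $|z|^{-(4-2\alpha)/(1-\alpha)}$, and average over levels $k$ to restore rotation invariance. Your Jacobian, prefactor bookkeeping, and the recovery of both regimes of $\rho_{\alpha,\vec\Lambda,\lambda}$ are correct. The one genuine reorganisation compared to the paper is that you fold the $k$-dependence into $f_k(z)=f(e^{i2\pi k\alpha}z)$ and use a $k$-independent change of variable, rather than the paper's $k$-dependent maps $h_{p,k}$; these are equivalent. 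You also do not invoke the paper's symmetry Lemma~\ref{lem:negligible_diag_points} / Equation~\eqref{eq:symmetry} at all, and this is where a real gap opens up.

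The problem is in your point (c). You characterise the pairs ``violating the linearisation'' as those with $\Im(p/m)<0$ and small modulus, because $(n/m)^\alpha$ then sits just below the cut of $z\mapsto z^\alpha$. But $(n/m)^\alpha$ is \emph{not} the quantity that appears: the correct ratio is $n^\alpha/m^\alpha=e^{i\alpha(\theta(n)-\theta(m))}|n/m|^\alpha=(n/m)^{[\alpha,l]}$ with $l\in\{0,-1\}$, and $(1+p/m)^{[\alpha,-1]}=e^{-i2\pi\alpha}(1+p/m)^\alpha=1+\alpha p/m+\bigO(|p/m|^2)$ when $\Im(p/m)<0$. So for a typical pair in $I_N^-$ (about half of all pairs), the linearisation $(m+p)^\alpha-m^\alpha=\alpha p m^{\alpha-1}+\bigO(|p|^2|m|^{\alpha-2})$ \emph{does} hold; these pairs must not be discarded, since they carry half of the limit measure. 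The genuinely problematic pairs are only those where $m$ and $m+p$ sit on opposite sides of the branch cut $\RR_+$ --- a thin set near the cut, corresponding to the paper's $I_{N,A}^{\rm bad}$ --- and their count is $\bigO_{\alpha,\Lambda}\big(N(AN^{1-\alpha}/\phi(N)+1)^3\big)$, not $\bigO(N)$ unconditionally: when $\lambda=0$ this is $\gg N$, and the resulting contribution matches the $\bigO(A^4/(N^\alpha\phi(N)))$ term only after dividing by $\psi(N)$. Your derivation via~\eqref{eq:linear_approx_puiss_alph_half_space} only justifies the linearisation on $I_N^+\cap\{\Im(p/m)\geq0\}$; to run your plan you must either (i) restrict to $I_N^+$ and use the symmetry $(m,p)\mapsto(m+p,-p)$, $\R^-=(z\mapsto-z)_*\R^+$, to recover $I_N^-$ (this is the paper's Lemma~\ref{lem:negligible_diag_points}/Equation~\eqref{eq:symmetry}), or (ii) prove the linearisation for all non-straddling pairs by expanding $e^{\alpha(\ln|1+p/m|+i(\theta(m+p)-\theta(m)))}$, and separately count the straddling pairs with the correct bound. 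You should also note, for the $\lambda=+\infty$ case, that the cited linear expansion only gives an upper bound; the argument actually needed is the Lipschitz bound on the inverse of $z\mapsto z^\alpha$ near $1$ (forcing $|p|<\sys_{\vec\Lambda}$, a contradiction), as in the paper's §~\ref{ssec:regime_infinite}.
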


\begin{remarks}\label{rk:effective_main_rat}
	{\rm
	\begin{itemize}
	\item By a standard approximation argument of a function in $C^0_c(\CC)$ by functions in $C^1_c(\CC)$, Theorem \ref{th:cv__complex_correlations} is an immediate consequence of Theorem \ref{th:effective_cv_complex_correlations}.
	\item For a version of the error term in Theorem \ref{th:effective_cv_complex_correlations} with explicit dependence on parameters of the grid $\Lambda$ (but not on the power parameter $\alpha$), see \cite{sayous2025PhD}.
	\item In the case $\alpha = \frac{a}{b} \in \QQ$, for all integers $N\in\NN$ and $k \in \ZZ-\{0\}$, we have the periodicity formula $\R_{N,0,k b}^{\frac{a}{b},\Lambda,\lvl}=\R_{N,0,b}^{\frac{a}{b},\Lambda,\lvl}$. This implies that we have, as $N\to\infty$,
	$$\R_{N,0,b}^{\frac{a}{b},\Lambda,\lvl} \weakstar \rho_{\alpha,\vec{\Lambda},\lambda} \Leb_\CC.$$
	\end{itemize}
	}
\end{remarks}

\subsection{Counting lemmas}\label{ssec:counting_lem}
Set $\sys_{\Lambda} = \min \{ |m| \, : \, m \in \Lambda, m \neq 0\}>0$. This quantity is not commonly used for studying grids, except when the grid is a lattice, in which case $\sys_{\Lambda}$ is the usual systole. It will be useful for many computations throughout this paper. The next lemma is a well known result which will be useful in order to explicitly compute the limit function $\rho_{\alpha,\vec{\Lambda},\lambda}$ as well as to bound error terms for Theorem \ref{th:effective_cv_complex_correlations}.
\begin{lemma}\label{lem:sum_powers_grid}
	For every real number $\beta \geq 0$, there exists a constant $C_{\beta, \Lambda} >0$ such that, for all $x\geq 1$,
	$$
	\Big| \sum_{\substack{m\in\Lambda \\ 0<|m|\leq x}} |m|^\beta - \frac{2\pi}{\covol_{\vec{\Lambda}}} \frac{x^{\beta+2}}{\beta+2} \Big| \leq C_{\beta, \Lambda} \, x^{\beta+1}.
	$$
	For every real number $\beta > -2$, we have the (less explicit) estimate, as $x\to\infty$,
	$$
	\sum_{\substack{m\in\Lambda \\ 0<|m|\leq x}} |m|^\beta = \frac{2\pi}{\covol_{\vec{\Lambda}}} \frac{x^{\beta+2}}{\beta+2} +\bigO_{\beta,\Lambda} \Big( x^{\beta+1} + 1 \Big).
	$$
	In the case $\beta=-2$, we have the following estimate, as $x\to+\infty$,
	$$
	\sum_{\substack{m\in\Lambda \\ 0<|m|\leq x}} \frac{1}{|m|^2} \sim \frac{2\pi}{\covol_{\vec{\Lambda}}} \log(x).
	$$
	For every real number $\beta < -2$, we have the convergence
	$$
	\sum_{m\in\Lambda-\{0\}} |m|^\beta < \infty.
	$$
\end{lemma}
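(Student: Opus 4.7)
The plan is to reduce all four estimates to a single application of Abel summation to the two-dimensional Gauss counting function of the grid $\Lambda$.

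I would first establish the auxiliary Gauss-type counting estimate
$$N_*(x) := \#\{m \in \Lambda \, : \, 0 < |m| \leq x\} = \frac{\pi x^2}{\covol_{\vec{\Lambda}}} + \bigO_{\Lambda}(x) \qquad (x \geq 1),$$
by the standard argument: fundamental parallelograms of $\vec{\Lambda}$ entirely contained in $D(0,x)$ cover an area asymptotically equal to $\pi x^2$, while those straddling the boundary circle of length $2\pi x$ are controlled in number by $\diam_{\vec{\Lambda}}\cdot x / \covol_{\vec{\Lambda}}$. A potential offset (if $\Lambda = z_0 + \vec{\Lambda}$ with $z_0 \neq 0$) and the exclusion of $m=0$ alter the count by a bounded amount, absorbed in the error term.

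Second, since $N_*$ is a right-continuous step function vanishing on $[0,\sys_\Lambda)$, Abel summation (i.e.\ integration by parts in the Riemann--Stieltjes sense) gives
$$\sum_{\substack{m \in \Lambda \\ 0 < |m| \leq x}} |m|^\beta \;=\; x^\beta N_*(x) \;-\; \beta \int_{\sys_\Lambda}^{x} t^{\beta-1} N_*(t) \, dt.$$
Plugging in the counting estimate, the main term $\pi t^2/\covol_{\vec{\Lambda}}$ contributes $\frac{2\pi}{(\beta+2)\covol_{\vec{\Lambda}}} x^{\beta+2}$ after combining both pieces (when $\beta \neq -2$), up to a boundary contribution at $\sys_\Lambda$ of size $\bigO_{\beta,\Lambda}(1)$; the error $\bigO(t)$ in $N_*$ produces $\bigO(x^{\beta+1})$ from the boundary term and $\bigO_{\beta,\Lambda}(x^{\beta+1}+1)$ from the remaining integral $\int_{\sys_\Lambda}^{x} t^{\beta} \, dt$.

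This single computation yields the second estimate of the lemma directly, and specializes to the first explicit statement when $\beta \geq 0$ and $x \geq 1$ (the additive constant is then absorbed into $x^{\beta+1}$). For $\beta = -2$ the integral degenerates to $\int_{\sys_\Lambda}^x t^{-1} dt = \log(x/\sys_\Lambda)$, and with the factor $-\beta \pi / \covol_{\vec{\Lambda}} = 2\pi / \covol_{\vec{\Lambda}}$ together with the bounded remaining terms, this gives the stated asymptotic equivalent. For $\beta < -2$, one has $x^\beta N_*(x) = \bigO(x^{\beta+2}) \to 0$ as $x \to +\infty$, and $\int_{\sys_\Lambda}^{+\infty} t^{\beta+1} \, dt$ converges since $\beta+1 < -1$, so the full series converges absolutely.

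The only (mild) obstacle is the bookkeeping at the lower endpoint $\sys_\Lambda$: one must verify that no singularity of $t^{\beta-1}$ at $0$ enters the argument (guaranteed by $N_*$ being supported on $[\sys_\Lambda,+\infty)$), and separately track the additive $1$ in the error term of the second statement, which becomes relevant only when $x^{\beta+1}$ may be smaller than $1$, i.e.\ when $\beta \leq -1$.
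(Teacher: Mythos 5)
Your proof takes essentially the same route as the paper — Gauss counting for the grid-point counting function, then Abel summation — but with a cleaner and more unified formulation. You apply a single Riemann--Stieltjes integration by parts directly to $N_*$, whereas the paper discretizes via the integer-indexed sequence $a_k = \card\{m : k-1 < |m| \leq k\}$, brackets $|m|^\beta$ between $(k-1)^\beta$ and $k^\beta$, and uses different cut points ($x_0=1$ for $\beta>0$, $x_0=2$ for $-2<\beta<0$) plus a separate direct integral comparison for $\beta<-2$. Your version handles all the regimes in one computation, avoids the extra bracketing, and makes the lower-endpoint bookkeeping transparent, which is a genuine simplification. One caveat you share with the paper: at the single value $\beta=-1$, the trivial Gauss error $|N_*(t)-\pi t^2/\covol_{\vec{\Lambda}}|=\bigO_\Lambda(t)$ feeds $\int_{\sys_{\Lambda}}^x t^{-1}\,dt$ into the remainder, which is $\bigO(\log x)$ rather than the claimed $\bigO(x^{\beta+1}+1)=\bigO(1)$; this exponent never actually occurs in the paper's applications (only $\beta=-\alpha\in\,]-1,0[\,$, $\beta=-\tfrac{1}{1-\alpha}<-1$, and $\beta\geq 0$ appear), so the gap is harmless, but strictly the lemma at $\beta=-1$ would need either a $\log x$ in the error or a subconvex Gauss-circle bound.
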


\begin{proof}
	We recall Abel's summation formula: for every real sequence $(a_k)_{k\geq 1}$, all real numbers $1\leq x_0 \leq x$ and all function $f : [x_0, +\infty[\, \to \RR$ of class $C^1$ on $\, ]x_0, +\infty[$, we have the equality
	\begin{equation}\label{eq:abel_formula}
		\sum_{x_0\leq k\leq x} a_k f(k) =
		\big(\sum_{1 \leq k\leq x} a_k \big) f(x)
		- \big(\sum_{1 \leq k < x_0} a_k \big) f(x_0)
		- \int_{x_0}^x\big(\sum_{1 \leq k\leq t} a_k \big) f'(t) \, dt
	\end{equation}
	Let $x\geq 1$ and $\F$ be a closed fundamental parallelogram of $\vec{\Lambda}$ containing $0$ with minimal diameter.
	
	For the case $\beta=0$, we follow the standard Gauss counting argument. Set $A_x=\{ m \in \Lambda \, : \, 0<|m| \leq x\}$ and $B_x=\bigcup_{m\in A_x} (m+\F)$, so that we have the equality $\Leb_\CC(B_x) = \card(A_x) \covol_{\vec{\Lambda}}$. The definition of $\diam_{\vec{\Lambda}}$ yields the inclusions
	\begin{equation}\label{eq:inclusions_gauss_counting}
		\bar{D}(0,x-\diam_{\vec{\Lambda}}) \subset B_x \subset \bar{D}(0,x+\diam_{\vec{\Lambda}}),
	\end{equation}
	where the closed disk $\bar{D}(0,x-\diam_{\vec{\Lambda}})$ is empty if $x < \diam_{\vec{\Lambda}}$. Computing the Lebesgue measure of these disks gives
	\begin{equation}\label{eq:gauss_count_beta_0_alt}
		\frac{\pi}{\covol_{\vec{\Lambda}}} \max \{ 0, x-\diam_{\vec{\Lambda}}\}^2 \leq \card(A_x) \leq \frac{\pi}{\covol_{\vec{\Lambda}}}(x+\diam_{\vec{\Lambda}})^2
	\end{equation}
	which is even valid in the case $0 \leq x < 1$ and implies the lemma in the case $\beta=0$.
	
	Assume $\beta > 0$. Consider the sequence $(a_k = \card \{ m\in \Lambda \, : \, k-1 < |m| \leq k \})_{k\geq 1}$. We have the following inequalities
	$$
	\sum_{1 \leq k \leq x } a_k (k-1)^\beta \leq \sum_{\substack{m\in\Lambda \\ 0 < |m| \leq x}} |m|^\beta \leq \sum_{1 \leq k \leq \lceil x \rceil} a_k k^\beta.
	$$
	Let $\floor \cdot $ denote the lower integral part on $\RR$. Applying Abel's formula \eqref{eq:abel_formula} to $f:t\mapsto t^\beta$ then $f:t \mapsto (t-1)^\beta$ with $x_0=1$, together with the case $\beta=0$ to estimate $\sum_{1 \leq k \leq x}a_k =\card(A_{\lfloor x \rfloor})$, this proves the lemma in the case $\beta >0$.
	
	Assume $\beta \in \, ]-2,0[\,$. Then we have the inequalities
	\begin{equation}\label{eq:pre_abel_beta_neg}
		\sum_{2 \leq k \leq x } a_k k^\beta \leq \sum_{\substack{m\in\Lambda \\ 1 < |m| \leq x}} |m|^\beta \leq \sum_{2 \leq k \leq \lceil x \rceil} a_k (k-1)^\beta.
	\end{equation}
	Applying Abel's formula \eqref{eq:abel_formula} to $f:t\mapsto t^\beta$ then $f:t \mapsto (t-1)^\beta$ with $x_0=2$, this proves the estimate, as $x\to \infty$,
	$$\sum_{\substack{m\in\Lambda \\ 1 < |m| \leq x}} |m|^\beta = \frac{2\pi}{\covol_{\vec{\Lambda}}} \frac{x^{\beta +2}}{\beta+2} + \bigO_\beta \big( \frac{1+\diam_{\vec{\Lambda}}^2}{\covol_{\vec{\Lambda}}} x^{\beta+1} \big).$$
	Combining this with the inequality $\sum_{\substack{m\in\Lambda \\ 0 < |m| \leq 1}} |m|^\beta \leq \sys_{\Lambda}^\beta \frac{\pi(1+\diam_{\vec{\Lambda}}^2)}{\covol_{\vec{\Lambda}}}$ coming from Equation \eqref{eq:gauss_count_beta_0_alt}, the lemma is proven in the case $\beta \in \, ]-2,0[\,$.
	
	The case $\beta=-2$ directly comes from the inequalities \eqref{eq:pre_abel_beta_neg} and the same application of Abel's formula, since then the only diverging term is equivalent to, as $x \to \infty$,
	$$
	- \int_2^x \frac{\pi}{\covol_{\vec{\Lambda}}} t^2 f'(t) \sim \frac{2 \pi}{\covol_{\vec{\Lambda}}} \log(x)
	$$
	in both cases where $f$ is given by $t\mapsto \frac{1}{t^2}$ or by $t \mapsto \frac{1}{(t-1)^2}$.
	
	For the case $\beta<-2$, using the case $\beta=0$ from Equation \eqref{eq:gauss_count_beta_0_alt}, we can directly compute
	{\small
	\begin{align*}
		\sum_{\substack{m\in\Lambda \\ 0 < |m| \leq x}} |m|^\beta & = \sum_{\substack{m\in\Lambda \\ 0 < |m| \leq 3\diam_{\vec{\Lambda}}}} |m|^\beta + \sum_{\substack{m\in\Lambda \\ 3\diam_{\vec{\Lambda}} < |m| \leq x}} |m|^\beta
		\\ & \leq \bigO_{\beta, \Lambda}(1) + \frac{1}{\covol_{\vec{\Lambda}}} \int_{\CC-D(0,2\diam_{\vec{\Lambda}})} (|z|-\diam_{\vec{\Lambda}})^\beta \, dz = \bigO_{\beta, \Lambda}(1).
	\end{align*}
	which gives an upper bound independent of $x$ for the sum.
	}
\end{proof}

Another helpful tool is given in the next lemma: it will allow us to count grid points that are near a given straight line.
\begin{lemma}\label{lem:grid_pts_near_line}
	Let $g:\RR_+\to\RR_+$ be a nonnegative piecewise continuous function and set $L_g=\{x+iy \, : \, x \geq 0, y\in\RR \mbox{ and } |y| \leq g(x)\}$. Then, for all $N\in\NN$, we have the inequality
	$$ \card \big(\Lambda \cap D(0,N) \cap L_g \big) \leq 4 \sum_{x=1}^N \frac{(1+\diam_{\vec{\Lambda}})(\max_{[x-1,x]}g+\diam_{\vec{\Lambda}})}{\covol_{\vec{\Lambda}}}.$$
\end{lemma}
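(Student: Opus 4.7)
The plan is to follow a vertical slicing argument combined with the standard Gauss counting trick already used in the proof of Lemma 2.5. Since $L_g$ lives in the right half-plane and every point $m \in \Lambda \cap D(0,N) \cap L_g$ satisfies $0 \leq \Re(m) \leq |m| \leq N$, I will partition the region by the unit-width vertical strips
$$ T_x = \{u+iv \in \CC \,:\, u \in [x-1,x], \; |v| \leq M_x \}, \qquad M_x := \max_{[x-1,x]} g, $$
for $x \in \{1, 2, \ldots, N\}$. Then $\Lambda \cap D(0,N) \cap L_g \subset \bigcup_{x=1}^N T_x$, so it suffices to bound $\card(\Lambda \cap T_x)$ for each $x$ and sum.

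For a fixed $x$, let $\F$ be a closed fundamental parallelogram of $\vec{\Lambda}$ containing $0$ of minimal diameter $\diam_{\vec{\Lambda}}$. The translates $m + \F$ for $m \in \Lambda \cap T_x$ have pairwise disjoint interiors and each lies in the enlarged rectangle
$$ \widetilde{T}_x = [x-1-\diam_{\vec{\Lambda}}, \, x + \diam_{\vec{\Lambda}}] \times [-M_x - \diam_{\vec{\Lambda}}, \, M_x + \diam_{\vec{\Lambda}}], $$
of area $(1 + 2\diam_{\vec{\Lambda}})(2 M_x + 2\diam_{\vec{\Lambda}})$. Comparing this area to $\card(\Lambda \cap T_x) \cdot \covol_{\vec{\Lambda}}$ yields
$$ \card(\Lambda \cap T_x) \leq \frac{2(1 + 2\diam_{\vec{\Lambda}})(M_x + \diam_{\vec{\Lambda}})}{\covol_{\vec{\Lambda}}} \leq \frac{4(1+\diam_{\vec{\Lambda}})(M_x + \diam_{\vec{\Lambda}})}{\covol_{\vec{\Lambda}}}, $$
using the elementary bound $2(1+2t) \leq 4(1+t)$ for $t \geq 0$.

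Summing the above inequality over $x = 1, \ldots, N$ yields the claimed estimate. There is no real obstacle here; the only mild care required is to ensure the strips cover the entire projection $\Re(\Lambda \cap D(0,N) \cap L_g) \subset [0,N]$ (which is immediate from $|m| \leq N$) and to note that overlaps along vertical lines $\Re = x$ only inflate the right-hand side, so the inequality remains valid.
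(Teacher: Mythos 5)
Your proof is correct and follows essentially the same route as the paper: partition the set by unit-width vertical strips, thicken each strip by $\diam_{\vec{\Lambda}}$ so that the fundamental-domain translates attached to grid points in the strip are contained in the thickened rectangle, and compare areas (Gauss counting) to bound the number of grid points per strip before summing. The only cosmetic difference is that you spell out the elementary numeric bound $2(1+2t)\leq 4(1+t)$, which the paper leaves implicit.
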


\begin{proof}
	Fix $N\in\NN-\{0\}$. For every $x\in\{1,\ldots,N\}$, let $m_x$ denote the real number $\max_{[x-1,x]}g$ and consider the rectangle $R_x=[x-1,x]+i[-m_x,m_x]$. We have the inequality
	\vspace{-0.15cm}
	$$\card \big(\Lambda \cap D(0,N) \cap L_g \big) \leq \sum_{x=1}^N \card(\Lambda \cap R_x).$$
	\vspace{-0.7cm}
	\begin{figure}[h!]
		\centering
		\includegraphics[height=6.7cm]{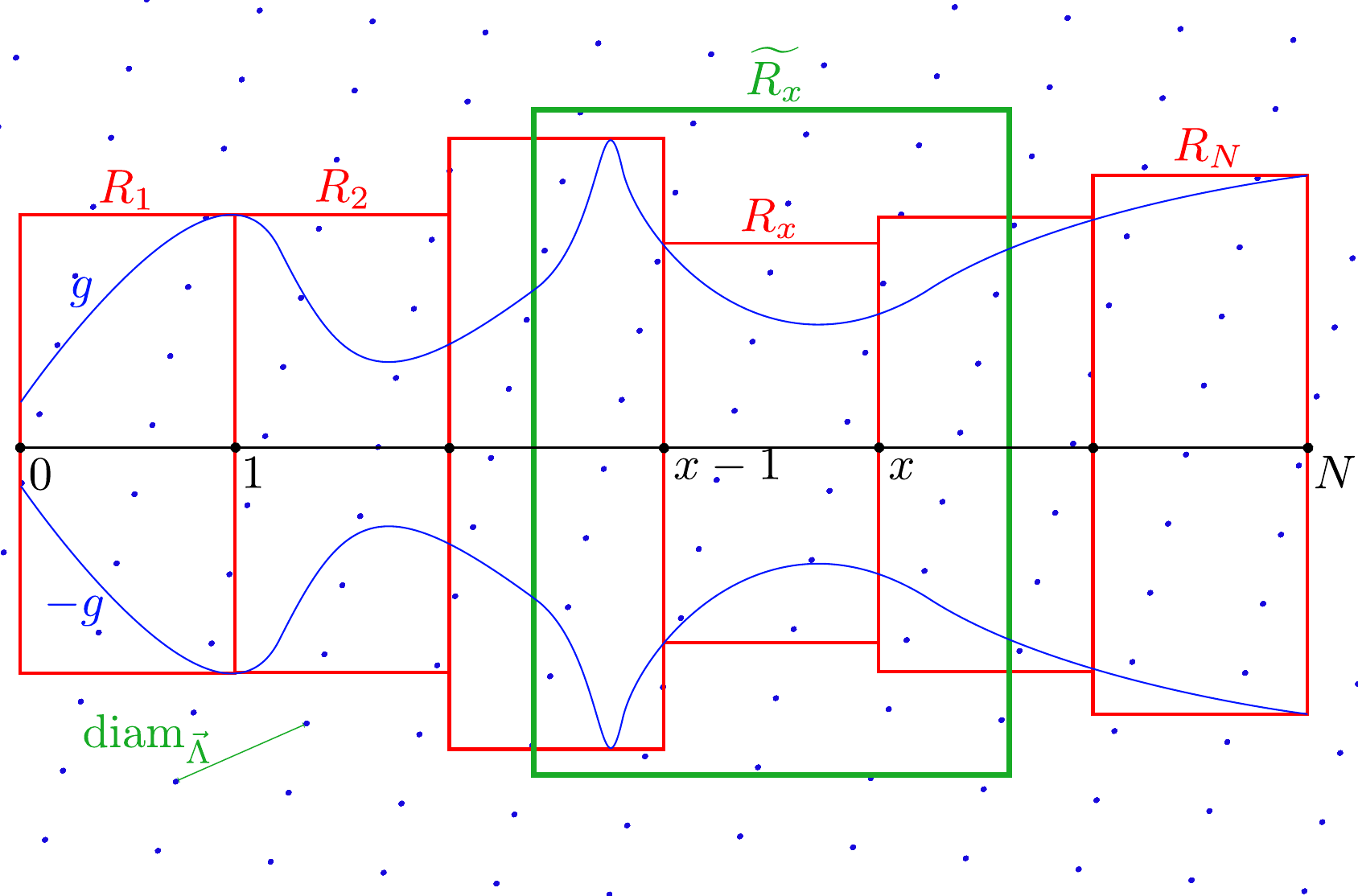}
		\label{fig:proof_points_near_line}
	\end{figure}

	\vspace{-0.15cm}
	\noindent For each $x$, let us denote by $\widetilde{R_x}$ the $\diam_{\vec{\Lambda}}$-neighbourhood of $R_x$ for the infinity norm $\|z\|_\infty = \max\{ |\Re(z)|, |\Im(z)|\}$ (so that $\widetilde{R_x}$ is a rectangle, see Figure \ref{fig:proof_points_near_line}). Using Gauss counting argument, the inequality between the Euclidean norm and the infinity norm then yields the inequality, for all $x\in\{1, \ldots, N\}$,
	$$\card(\Lambda \cap R_x) \covol_{\vec{\Lambda}} \leq \vol(\widetilde{R_x}) = (1+2\diam_{\vec{\Lambda}})(2m_x+2\diam_{\vec{\Lambda}}).$$
	Summing over $x\in\{1, \ldots, N\}$ proves the lemma.
\end{proof}

\subsection{Symmetry lemma}\label{ssec:symmetry_lem}
By the change of variable $p=n-m$, we can rewrite the definition \eqref{eq:def_correl_pair_lvl} as follows
\begin{equation}\label{eq:def_correl_pair_m,p}
	\R_{N,N',N''}^{\alpha,\Lambda,\lvl} = \frac{1}{(N'+N'')\psi(N)} \sum_{k=-N'}^{N''-1} \sum_{p\in\vec{\Lambda}-\{0\}}
	\sum_{\substack{m \in \Lambda\\ 0 < |m+p|,|m|\leq N}} \Delta_{\phi(N)((m+p)^{[\alpha,k]} - m^{[\alpha,k]})}
\end{equation} 
For any number $z\in\CC^*$, recall the notation $z^\alpha = z^{[\alpha,0]}$ for its level-$0$ $\alpha$ power. Let $\theta : \CC^* \to \RR$ denote the projection of the argument function onto $[0, 2\pi[ \,$. For all nonzero complex numbers $z,z'$, the definition of their level-$k$ $\alpha$ power yields the formula $\frac{z^{[\alpha,k]}}{z'^{[\alpha,k]}}=(\frac{z}{z'})^{[\alpha,l]}$ where $l=\lfloor\frac{\theta(z)-\theta(z')}{2\pi}\rfloor = 0 \mbox{ or} -1$ depending on the sign of $\theta(z)-\theta(z')$, independently of $k$. Set
\begin{align*}
	I_N^+ & = \{ (m,p) \in \Lambda \times (\vec{\Lambda}-\{0\}) \, : \, 0 < |m|, |m+p| \leq N \mbox{ and } \theta(m+p) > \theta(m) \} \\
	\mbox{and } I_N^-& = \{ (m,p) \in \Lambda \times (\vec{\Lambda}-\{0\}) \, : \, 0 < |m|, |m+p| \leq N \mbox{ and } \theta(m+p) < \theta(m)\}.
\end{align*}
In other words, putting aside the case $\theta(m+p)=\theta(m)$ for now, the set $I_N^+$ (resp.~$I_N^-$) contains the indices $(m,p)$ in Equation \eqref{eq:def_correl_pair_m,p} verifying the formula, for all $k\in\ZZ$,
$$
\frac{(m+p)^{[\alpha,k]}}{m^{[\alpha,k]}}=(1+\frac{p}{m})^\alpha \, \mbox{ \big(resp.~}\frac{(m+p)^{[\alpha,k]}}{m^{[\alpha,k]}}=(1+\frac{p}{m})^{[\alpha,-1]} \mbox{\big)}.
$$
Let $\R_{N,N',N''}^{\alpha,\Lambda,+}$ (resp.~$\R_{N,N',N''}^{\alpha,\Lambda,-}$) denote the part of $\R_{N,N',N''}^{\alpha,\Lambda,\lvl}$ with indices in $I_N^+$ (resp.~in $I_N^-$) in Equation \eqref{eq:def_correl_pair_m,p}. One can notice a one-to-one correspondence between $I_N^+$ and $I_N^-$ given by the map $(m,p) \mapsto (m+p,-p)$. This yields the formula 
\begin{equation}\label{eq:symmetry}
	\R_{N,N',N''}^{\alpha,\Lambda,-}= (z\mapsto-z)_*\R_{N,N',N''}^{\alpha,\Lambda,+}.
\end{equation}
The next lemma indicates that the contribution of the indices $(m,p)$ which do not belong to $I_N^+$ nor $I_N^-$ is negligible. Combined with the formula \eqref{eq:symmetry}, we will be able to derive the vague convergence of $\R_{N,N',N''}^{\alpha,\Lambda,\lvl}$ from the one of $\R_{N,N',N''}^{\alpha,\Lambda,+}$.
\begin{lemma}\label{lem:negligible_diag_points}
	Let $f \in C_c^1(\CC)$ and choose $A>1$ such that $\supp f \subset D(0,A)$. We have the estimate, as $N \to \infty$,
	$$
	\R_{N,N',N''}^{\alpha,\Lambda,\lvl}(f) = \R_{N,N',N''}^{\alpha,\Lambda,+}(f)+\R_{N,N',N''}^{\alpha,\Lambda,-}(f) + \bigO_{\alpha,\Lambda}\Big( \frac{N\|f\|_\infty}{\psi(N)} \Big( \Big(\frac{A N^{1-\alpha}}{\phi(N)}\Big) + 1 \Big)^2 \Big)
	$$
\end{lemma}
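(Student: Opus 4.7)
The pairs $(m,p) \in \Lambda \times (\vec{\Lambda}-\{0\})$ with $0 < |m|, |m+p| \leq N$ not captured by $I_N^+ \cup I_N^-$ are exactly those satisfying $\theta(m+p) = \theta(m)$, i.e.~$0$, $m$ and $m+p$ are collinear with $m$ and $m+p$ on the same ray from the origin. For such a pair, $m$ and $m+p$ share the same level-$k$ argument $\omega_k$ for every $k \in \ZZ$, so $(m+p)^{[\alpha,k]} - m^{[\alpha,k]} = (|m+p|^\alpha - |m|^\alpha)\, e^{i\alpha\omega_k}$; in particular its modulus $\bigl||m+p|^\alpha - |m|^\alpha\bigr|$ is independent of $k$. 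Bounding $|f|$ by $\|f\|_\infty$ and noting that the support indicator is therefore $k$-independent, the sum over $k \in \{-N',\ldots,N''-1\}$ produces a factor $N'+N''$ that cancels the prefactor $1/(N'+N'')$, giving
\begin{equation*}
\bigl|\R_{N,N',N''}^{\alpha,\Lambda,\lvl}(f) - \R_{N,N',N''}^{\alpha,\Lambda,+}(f) - \R_{N,N',N''}^{\alpha,\Lambda,-}(f)\bigr| \;\leq\; \frac{\|f\|_\infty}{\psi(N)} \, \bigl|I^0_{N,A}\bigr|,
\end{equation*}
where $I^0_{N,A}$ denotes the collinear pairs above further subject to $\phi(N)\bigl||m+p|^\alpha - |m|^\alpha\bigr| \leq A$.

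Next I would bound $|I^0_{N,A}|$ in two steps. The mean value theorem applied to $t \mapsto t^\alpha$ on the interval with endpoints $|m|, |m+p| \in (0,N]$, combined with the identity $\bigl||m+p|-|m|\bigr| = |p|$ valid whenever $m$ and $m+p$ lie on a common ray, gives
\begin{equation*}
\bigl||m+p|^\alpha - |m|^\alpha\bigr| \;=\; \alpha\,\xi^{\alpha-1}\,|p| \;\geq\; \alpha\,N^{\alpha-1}\,|p|
\end{equation*}
because $\alpha - 1 < 0$ forces $\xi^{\alpha-1} \geq N^{\alpha-1}$ for any $\xi \leq N$. The support constraint thus forces $|p| \leq P := \frac{AN^{1-\alpha}}{\alpha\,\phi(N)}$. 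Moreover, each $(m,p) \in I^0_{N,A}$ satisfies $m \in \RR p$, so
\begin{equation*}
\bigl|I^0_{N,A}\bigr| \;\leq\; \sum_{\substack{p \in \vec{\Lambda}-\{0\} \\ |p| \leq P}} \bigl|\Lambda \cap \RR p \cap D(0,N)\bigr|.
\end{equation*}

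Applying Lemma \ref{lem:grid_pts_near_line} with $g = 0$ to each of the two rays making up $\RR p$ (after a rotation that preserves $\diam_{\vec{\Lambda}}$ and $\covol_{\vec{\Lambda}}$) bounds the inner count by $\bigO_\Lambda(N)$, while Lemma \ref{lem:sum_powers_grid} with $\beta = 0$ (complemented by a trivial constant bound when $P < 1$) gives $\bigl|\{p \in \vec{\Lambda}-\{0\} : |p| \leq P\}\bigr| = \bigO_\Lambda((P+1)^2)$. Combining these and using the inequality $(P+1)^2 \leq \alpha^{-2}\bigl(\tfrac{AN^{1-\alpha}}{\phi(N)}+1\bigr)^2$ (valid since $\alpha \leq 1$) yields the claimed error term. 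The main obstacle in this plan is the passage from the support constraint on the modulus to the bound $|p| \leq P$: it crucially exploits both the monotonicity of $\xi \mapsto \xi^{\alpha-1}$ for $\alpha < 1$ and the collinearity identity $\bigl||m+p|-|m|\bigr| = |p|$, which collapses a difference of two positive $\alpha$-powers into the single real quantity $|p|$. A secondary subtlety is the choice to use the uniform line-count $\bigO_\Lambda(N)$ from Lemma \ref{lem:grid_pts_near_line} instead of a sharper primitive-vector refinement of the form $\bigO_\Lambda(N/|v_0|)$, since the latter would introduce a spurious $\log P$ factor after summation over $p$.
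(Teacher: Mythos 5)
Your proof is correct and follows essentially the same route as the paper's: the discrepancy is attributed to the collinear pairs ($\theta(m+p)=\theta(m)$), a mean-value argument forces $|p|\leq\frac{AN^{1-\alpha}}{\alpha\phi(N)}$, and the count is completed by multiplying the Gauss bound $\bigO_\Lambda((P+1)^2)$ on such $p$ with an $\bigO_\Lambda(N)$ bound on grid points of $\RR p$ inside $D(0,N)$ (the paper counts $m+p$ on a ray via the crude bound $N/\sys_{\vec{\Lambda}}$ rather than invoking Lemma~\ref{lem:grid_pts_near_line}, but both yield $\bigO_\Lambda(N)$). One small quibble with your closing aside: the primitive-vector refinement $\bigO_\Lambda(N/|v_0(p)|)$ would give, after summing over $|p|\leq P$ and using Lemma~\ref{lem:sum_powers_grid} with $\beta=-2$, the \emph{sharper} bound $\bigO_\Lambda(NP\log P)$, so the $\log P$ is an improvement rather than a spurious loss — though the cruder count you (and the paper) use is of course all the lemma requires.
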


\begin{proof}
	The difference $\R_{N,N',N''}^{\alpha,\Lambda,\lvl}(f) - \big(\R_{N,N',N''}^{\alpha,\Lambda,+}(f)+\R_{N,N',N''}^{\alpha,\Lambda,-}(f)\big)$ is
	\begin{equation}\label{eq:correl_pair_diag_points}
		\frac{1}{(N'+N'')\psi(N)} \sum_{k=-N'}^{N''-1} \sum_{p\in\vec{\Lambda}-\{0\}}
		\sum_{\substack{m \in \Lambda\\ 0 < |m+p|,|m|\leq N \\ \theta(m+p)=\theta(m)}} f(\phi(N)((m+p)^{[\alpha,k]} - m^{[\alpha,k]})).
	\end{equation}
	Fix $k\in\ZZ$. Our goal is then to count pairs of points $(m,p) \in \Lambda \times (\vec{\Lambda}-\{0\})$ verifying the inequalities $0<|m|,|m+p| \leq N$, the equality of arguments $\theta(m+p)=\theta(m)$ and the inequality $|\phi(N)((m+p)^{[\alpha,k]} - m^{[\alpha,k]})| \leq A$. We denote by $I_{N,A}^=$ the set of such indices $(m,p)$ (which indeed does not depend on $k$ thanks to the formula $z^{[\alpha,k]}=e^{i 2\pi k\alpha} z^\alpha$). Let $(m,p) \in I_{N,A}^=$. We denote by $\omega = \theta(m) = \theta(m+p)$ their common argument in $[0,2\pi[\,$. The function $z \mapsto z^{[\alpha,k]}$ is regular when we restrict it to the segment $[m,m+p]$: the complex valued function $g:t \mapsto (m+pt)^{[\alpha,k]}=(|m|+t|p|)^\alpha e^{i\alpha (\omega+2\pi k)}$ is differentiable and its derivative is given by $g':t \mapsto \frac{\alpha e^{i\alpha (\omega+2\pi k)} |p|}{(|m|+t|p|)^{1-\alpha}}$. It is minimal in modulus when $t=1$, for which we have $|g'(1)|= \frac{\alpha |p|}{|m+p|^{1-\alpha}}$. The mean value inequality then grants us
	$$ \frac{A}{\phi(N)} \geq |(m+p)^{[\alpha,k]} - m^{[\alpha,k]}| = |g(1)-g(0)| \geq \frac{\alpha |p|}{|m+p|^{1-\alpha}}.$$
	From this, we derive the main inequality that we will use to count such pairs of points, namely
	\begin{equation}\label{eq:bnd_diag_p}
		|p| \leq \frac{AN^{1-\alpha}}{\alpha \phi(N)}.
	\end{equation}
	As $N\to \infty$, Equation \eqref{eq:gauss_count_beta_0_alt} indicates that there are only $\bigO_\alpha \big( \frac{1}{\covol_{\vec{\Lambda}}} \big( \frac{A N^{1-\alpha}}{\alpha \phi(N)} + \diam_{\vec{\Lambda}} \big)^2 \big)$ points $p \in \vec{\Lambda}$ verifying the inequality \eqref{eq:bnd_diag_p}. Let us fix such a point $p$. Then, for the points $0$, $m$ and $m+p$ to be aligned, the nonzero grid point $m+p$ has to be chosen on the ray from $0$ to $p$. Since moreover it has to be in the closed disk $\bar{D}(0,N)$, there are at most $\frac{N}{\sys_{\vec{\Lambda}}}$ ways to choose the point $m+p$. This counting argument yields, as $N \to \infty$,
	{\footnotesize
	\begin{equation}\label{eq:estimate_pts_diag}
		\card(I_{N,A}^=) = \bigO_\alpha \Big( \frac{N}{\sys_{\vec{\Lambda}} \covol_{\vec{\Lambda}}} \Big( \Big(\frac{A N^{1-\alpha}}{\alpha \phi(N)}\Big) + \diam_{\vec{\Lambda}} \Big)^2 \Big) = \bigO_{\alpha,\Lambda} \Big( N \Big( \Big(\frac{A N^{1-\alpha}}{\alpha \phi(N)}\Big) + 1 \Big)^2  \Big).
	\end{equation}
	}The triangle inequality applied to Equation \eqref{eq:correl_pair_diag_points} gives the estimate stated in the lemma.
\end{proof}

\begin{remark}\label{rk:negligible_diag_points_case_lambda_0}
	{\rm
	Since the renormalization factor is given by $\psi(N) = (\frac{N^{2-\alpha}}{\phi(N)})^2$, in the case $\lambda=0$ of Theorem \ref{th:effective_cv_complex_correlations}, the estimate of Lemma \ref{lem:negligible_diag_points} becomes
	$$
	\R_{N,N',N''}^{\alpha,\Lambda,\lvl}(f) = \R_{N,N',N''}^{\alpha,\Lambda,+}(f)+\R_{N,N',N''}^{\alpha,\Lambda,-}(f) + \bigO_{\alpha,\Lambda}\Big(\frac{A^2 \|f\|_\infty}{N}\Big).
	$$
	}
\end{remark}

\subsection{Linear approximation}\label{ssec:linear_approx_lem}
Thanks to Lemma \ref{lem:negligible_diag_points} and the symmetry formula \eqref{eq:symmetry}, for every $f \in C_c^1(\CC)$, we can focus on the asymptotic behaviour of the sequence $(\R_{N,N',N''}^{\alpha,\Lambda,+}(f))_{N,N',N''\in\NN}$, whose formula can be rewritten
\begin{equation}\label{eq:def_correl_pos_m,p}
	\R_{N,N',N''}^{\alpha,\Lambda,+}(f) = \frac{1}{(N'+N'')\psi(N)} \sum_{k=-N'}^{N''-1} \sum_{(m,p)\in I_N^+} f\big(\phi(N)m^{[\alpha,k]}((1+\frac{p}{m})^\alpha - 1)\big).
\end{equation}
Define another sequence of positive measures by
$$\mu_{N,N',N''}^+ = \frac{1}{(N'+N'')\psi(N)} \sum_{k=-N'}^{N''-1} \sum_{(m,p)\in I_N^+} \Delta_{\phi(N) \frac{\alpha p}{m^{[1-\alpha,k]}}}.$$

The next result is a linear approximation lemma. 

\begin{lemma}\label{lem:complex_linear_approx}
	Let $f \in C_c^1(\CC)$ and choose $A>1$ such that $\supp f \subset D(0,A)$. We assume that $\frac{\phi(N)}{N^{1-\alpha}} \underset{N\to\infty}{\longrightarrow} \lambda \in [0,+\infty[\,$. Then we have, as $N \to \infty$, 
	{\footnotesize
	\begin{align*}
	\R_{N,N',N''}^{\alpha,\Lambda,+}(f) - \mu_{N,N',N''}^+(f) = \bigO_{\alpha,\Lambda} \Big(\|df\|_\infty \big(\frac{A^4}{N^\alpha\phi(N)} + \frac{A^2\phi(N)}{N^{2-\alpha}}\big) + \|f\|_\infty \big(\frac{A^3}{N^\alpha\phi(N)}+\frac{\phi(N)^2}{N^{3-2\alpha}}\big) \Big).
	\end{align*}
	}
\end{lemma}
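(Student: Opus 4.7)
The plan is to exploit the Lipschitz character of $f\in C_c^1(\CC)$, giving the global bound $|f(X)-f(Y)|\le\|df\|_\infty|X-Y|$ (valid on all of $\CC$ since $f$ extends by zero outside its support), together with a careful Taylor estimate of $|X_{m,p,k}-Y_{m,p,k}|$. The identity $m^{[\alpha,k]}\cdot m^{[1-\alpha,k]}=m$, which follows directly from the formula $z^{[\beta,k]}=|z|^\beta e^{i\beta\omega_k}$, yields $Y_{m,p,k}=\phi(N)m^{[\alpha,k]}\cdot\alpha(p/m)$, so that
$$|X_{m,p,k}-Y_{m,p,k}|=\phi(N)\,|m|^\alpha\,\bigl|(1+p/m)^\alpha-1-\alpha(p/m)\bigr|.$$
Since $(m,p)\in I_N^+$ places $p/m$ in the closed upper half-plane away from the branch cut, the refinement of \eqref{eq:linear_approx_puiss_alph_half_space} gives a uniform quadratic bound $|(1+z)^\alpha-1-\alpha z|\le C_\alpha|z|^2$ on $\{z:\Im z\ge 0,\,|z|\le 1/2\}$, via the integral remainder $\int_0^1(1-t)\alpha(\alpha-1)(1+tz)^{\alpha-2}z^2\,dt$ whose kernel is bounded by $2^{2-\alpha}$ there.

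I would then split $I_N^+$ according to $|p|\le|m|/2$ or $|p|>|m|/2$. On the small-ratio part, the Taylor estimate yields $|X-Y|\le C_\alpha\phi(N)|p|^2/|m|^{2-\alpha}$; the support condition $\supp f\subset D(0,A)$ combined with $|X-Y|\lesssim |Y|/\alpha$ forces $|Y|\le CA$ for some constant, hence $|p|\le CA|m|^{1-\alpha}/(\alpha\phi(N))$. After cancelling the sum over $k$ against the prefactor $1/(N'+N'')$, the contribution reads
$$\frac{\|df\|_\infty C_\alpha\phi(N)}{\psi(N)}\sum_{0<|m|\le N}\frac{1}{|m|^{2-\alpha}}\sum_{0<|p|\le CA|m|^{1-\alpha}/(\alpha\phi(N))}|p|^2.$$
The crucial step is the two-regime bound $\sum_{0<|p|\le x}|p|^2=O_\Lambda(x^4+x^2)$: the $x^4$ piece comes from Lemma \ref{lem:sum_powers_grid} applied with $\beta=2$, while the $x^2$ piece dominates when $x$ is so small that the disk carries only $O(1)$ lattice points. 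Applying Lemma \ref{lem:sum_powers_grid} again to $\sum_{|m|\le N}|m|^{2-3\alpha}$ and $\sum_{|m|\le N}|m|^{-\alpha}$, and substituting $\psi(N)=N^{4-2\alpha}/\phi(N)^2$, produces exactly the two $\|df\|_\infty$ terms $A^4/(N^\alpha\phi(N))$ and $A^2\phi(N)/N^{2-\alpha}$.

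On the large-ratio part $|p|>|m|/2$ the quadratic bound is unavailable, and one replaces it by the uniform estimate $|f(X)-f(Y)|\le 2\|f\|_\infty\mathbbm{1}_{\min(|X|,|Y|)\le A}$. The case $|Y|\le A$ combined with $|p|>|m|/2$ forces $|m|^\alpha\le 2A/(\alpha\phi(N))$, which falls below $\sys_{\vec\Lambda}^\alpha$ for $N\ge N_0$. The case $|X|\le A$ is analyzed through the local Jacobian $\alpha^2|m|^{2\alpha-2}$ of $q\mapsto q^{[\alpha,k]}$: the preimage of the disk $D(m^{[\alpha,k]},A/\phi(N))$ is a small region around $m$ of both radial and angular extent $A|m|^{1-\alpha}/(\alpha\phi(N))$, so any point $m+p$ in it satisfies $|p|\lesssim A|m|^{1-\alpha}/(\alpha\phi(N))$, which combined with $|p|>|m|/2$ again forces $|m|$ below $\sys_{\vec\Lambda}$ for $N\ge N_0$. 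The residual contribution from the transitional pairs (where $|m|$ is comparable to the threshold) is estimated via Lemma \ref{lem:grid_pts_near_line} applied to a thin sector around the ray through $m$, and Gauss counting on $|m|$ delivers the remaining two $\|f\|_\infty$ terms $A^3/(N^\alpha\phi(N))$ and $\phi(N)^2/N^{3-2\alpha}$.

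The main obstacle is reconciling in the large-ratio regime the topological condition $(m,p)\in I_N^+$ and the geometric condition $|p|>|m|/2$ in the plane $\CC$ with the small-image condition $|X|\le A$ on the helicoid: since the map $q\mapsto q^{[\alpha,k]}$ severely distorts distances (the Jacobian behaves like $|q|^{2\alpha-2}\to\infty$ near $0$), naive Gauss counting overshoots and one must invoke the near-line estimate of Lemma \ref{lem:grid_pts_near_line} to recover the sharp $\|f\|_\infty$ bounds.
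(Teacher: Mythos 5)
There is a genuine gap at the heart of your argument: the claim that \enquote{$(m,p)\in I_N^+$ places $p/m$ in the closed upper half-plane away from the branch cut} is false. By definition $I_N^+$ is the set of indices with $\theta(m+p)>\theta(m)$, where $\theta$ is the argument projected to $[0,2\pi[\,$. This allows $\theta(m+p)-\theta(m)\in \,]\pi,2\pi[\,$, in which case $\Im(p/m)=\Im\big(\frac{m+p}{m}\big)<0$. (Concretely: $m=1000$, $p=-i$ in $\ZZ[i]$ gives $\theta(m+p)\approx 2\pi$ and $\theta(m)=0$, so $(m,p)\in I_N^+$, yet $\Im(p/m)<0$.) For such pairs the expansion \eqref{eq:linear_approx_puiss_alph_half_space} does not apply: with the paper's nonstandard branch cut on $\RR_+$, the path $t\mapsto 1+tp/m$ enters the lower half-plane at $t=0^+$, so $\arg(1+tp/m)$ jumps to near $2\pi$ and $(1+p/m)^\alpha-1$ is $\approx e^{i2\pi\alpha}-1$, of order $1$, not $\bigO(|p/m|^2)$. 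Your integral-remainder argument, which presupposes $t\mapsto(1+tp/m)^{\alpha}$ is $C^2$ on $[0,1]$, breaks down precisely here. Since these \enquote{bad} indices are also small-ratio (the support condition already forces $|p/m|=\bigO_\alpha(A/(|m|^\alpha\phi(N)))$), your small-ratio/large-ratio split does not isolate them, and your large-ratio analysis is in fact vacuous for large $N$ and cannot serve as a substitute.

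The paper's proof deals with exactly this: it splits $I_{N,A}^+$ into $I_{N,A}^{\rm good}$ (where $\Im(p/m)\ge 0$ and your Taylor argument is essentially what is done) and $I_{N,A}^{\rm bad}$ (where $\Im(p/m)<0$). For bad indices it shows, from $|p/m|$ small and Equation \eqref{eq:linear_approx_link_theta_p/m}, that $2\pi-(\theta(m+p)-\theta(m))=\bigO_\alpha(A/(|m|^\alpha\phi(N)))$, hence both $m$ and $m+p$ lie within a shrinking neighbourhood of the positive real ray; the bad count is then obtained from Lemma \ref{lem:grid_pts_near_line}, and multiplying by $\|f\|_\infty$ produces exactly the two $\|f\|_\infty$ terms in the lemma's conclusion. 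Without this bad-index treatment your proof both misses one source of error and applies an invalid quadratic bound on a nonempty set of indices, so it does not establish the stated estimate. To repair it, you would need to replicate the paper's detection of the $\Im(p/m)<0$ indices via the argument-near-$2\pi$ constraint and then invoke Lemma \ref{lem:grid_pts_near_line}; the remainder of your computation for the good indices (Lemma \ref{lem:sum_powers_grid} with $\beta=2-3\alpha$ and $\beta=-\alpha$, substitution of $\psi$) is essentially the paper's.
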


\begin{proof}
	Fix $k\in\ZZ$. For all $(m,p) \in I_N^+$, we want to bound from above the quantity
	\begin{equation}\label{eq:complex_linear_approx_m,p}
		\big| f\big(\phi(N)m^{[\alpha,k]} \big( \big(1+\frac{p}{m}\big)^\alpha - 1 \big)\big)\big) - f\big(\phi(N) \frac{\alpha p}{m^{[1-\alpha,k]}}\big) \big|.
	\end{equation}
	By the hypothesis $\supp f \subset D(0,A)$, in order for the latter quantity not to be equal to $0$, the index $(m,p)$ has to verify (at least) one of the two inequalities
	\vspace{-0.1cm}
	\begin{equation}\label{eq:complex_bounds_INA}
		|p| \leq \frac{A |m|^{1-\alpha}}{\alpha \phi(N)} \; \mbox{ or } \; \big|\big(1+\frac{p}{m}\big)^\alpha - 1\big| \leq \frac{A}{|m|^\alpha\phi(N)}.
	\end{equation}
	Let $I_{N,A}^+$ be the subset of $I_N^+$ consisting of such indices. Let $(m,p) \in I_{N,A}^+$. Note that the inverse of the map $z\mapsto z^\alpha$ is Lipschitz continuous on a small neighbourhood of $1=1^\alpha$ in the image of $z\mapsto z^\alpha$. This neighbourhood may be taken to be $D(1,1) \cap \{z \in \CC \, : \, \Im(z) \geq 0\}\cap (z\mapsto z^\alpha)(\CC^*)$, which is convex and where $(z\mapsto z^\alpha)^{-1}$ has its derivative's norm lesser than $\frac{2^{\frac{1}{\alpha}-1}}{\alpha}$. Then, as a consequence of Equation \eqref{eq:complex_bounds_INA}, since $\frac{A}{|m|^\alpha\phi(N)} \leq \frac{A}{\sys_\Lambda^\alpha \phi(N)} \to 0$ as $N\to \infty$, we have the estimate, as $N\to\infty$,
	\vspace{-0.15cm}
	\begin{equation}\label{eq:linear_approx_unif_conv_p/m}
			\big| \frac{p}{m} \big| = \bigO_\alpha\Big(\frac{A}{|m|^\alpha \phi(N)}\Big).
	\end{equation}
	(Recall that, thank to the definition of $\bigO_\alpha$, the latter estimate is uniform over any temporarily fixed variable, in particular over $(m,p) \in I_{N,A}^+$). One may notice that Equation \eqref{eq:linear_approx_unif_conv_p/m} implies that $m+p$ and $m$ are not independent grid points: they have to be close together since $p$ has to verify $|p| = \bigO_\alpha \big( \frac{A|m|^{1-\alpha}}{\phi(N)} \big)$. With the consequential estimate $|p| = \bigO_\alpha \big( \frac{AN^{1-\alpha}}{\phi(N)} \big)$, we use the Gauss counting argument from Equation \eqref{eq:gauss_count_beta_0_alt} (summing over $\vec{\Lambda}$ with $x=\bigO_\alpha \big( \frac{AN^{1-\alpha}}{\phi(N)} \big)$) to deduce a result that we will use twice in the remaining part of the proof: as $N\to\infty$, we have the estimate (uniformly for every grid point $m\in\Lambda$),
	\vspace{-0.15cm}
	\begin{equation}\label{eq:linear_approx_card_p_lambda=0}
		\card \{ p \in \vec{\Lambda}-\{0\} \, : \, (m,p) \in I_{N,A}^+\} = \bigO_{\alpha,\Lambda} \Big( \Big(\frac{AN^{1-\alpha}}{\phi(N)}+1\Big)^2\Big).
	\end{equation}
	Recall the linear approximation \eqref{eq:linear_approx_puiss_alph_half_space} as $z \to 0$ with the restriction $\Im(z) \geq 0$. In order to apply it to most fractions $z=\frac{p}{m}$, we have to take out the indices $(m,p)$ for which $\Im(\frac{p}{m}) < 0$ holds. For that matter, we first notice that for all $(m,p) \in I_{N,A}^+$, the inequality $\Im(\frac{p}{m}) < 0$ holds if, and only if, we have $\theta(m+p)-\theta(m) \in \, ]\pi,2\pi[ \,$ (since $\Im(\frac{p}{m}) = \Im(\frac{m+p}{m})$). We denote by $I_{N,A}^{\rm bad}$ the set of these indices. Then, by use of Equation \eqref{eq:linear_approx_unif_conv_p/m}, for all indices $(m,p) \in I_{N,A}^{\rm bad}$, we have the estimate, as $N\to\infty$,
	\vspace{-0.15cm}
	{\footnotesize
	\begin{align*}
		\big|\frac{p}{m}\big| = \big|\frac{m+p}{m}-1\big| & = \big||1+\frac{p}{m}|e^{i(\theta(m+p)-\theta(m))} - 1\big| = |e^{i(\theta(m+p)-\theta(m))} - 1| + \bigO_\alpha\Big(\frac{A}{|m|^\alpha \phi(N)}\Big)
		\\& = |e^{i\frac{\theta(m+p)-\theta(m)}{2}} - e^{-i\frac{\theta(m+p)-\theta(m)}{2}}| + \bigO_\alpha\Big(\frac{A}{|m|^\alpha \phi(N)}\Big)
		\\& = 2\sin\big(\frac{\theta(m+p)-\theta(m)}{2}\big) + \bigO_\alpha\Big(\frac{A}{|m|^\alpha \phi(N)}\Big). \numberthis\label{eq:linear_approx_link_theta_p/m}
	\end{align*}
	}Using this, we claim that the quantity $\theta(m+p)-\theta(m)$ which belongs to $\, ]\pi, 2\pi[ \,$ since $(m,p) \in I_{N,A}^{\rm bad}$, has to be close to $2\pi$. Since the image of $\theta$ is $[0,2\pi[ \,$, this will imply that $\theta(m+p)$ has to be close to $2\pi$ while $\theta(p)$ has to be close to $0$. In other words, both grid points $m+p$ and $m$ have to be close to the real positive ray $\RR_+-\{0\}$. Using the concavity of the sinus function on $[0,\frac{\pi}{2}]$, we can derive the following estimate from Equation \eqref{eq:linear_approx_link_theta_p/m} (and using again Equation \eqref{eq:linear_approx_unif_conv_p/m}), as $N\to\infty$,
	\begin{align*}
		\frac{2}{\pi}(2\pi - (\theta(m+p)-\theta(m))) & \leq 2\sin\big( \frac{\theta(m+p)-\theta(m)}{2})
		\\ & = \big|\frac{p}{m}\big| + \bigO_\alpha\Big(\frac{A}{|m|^\alpha \phi(N)}\Big)
		\\ \mbox{thus~~} 2\pi - (\theta(m+p)-\theta(m)) & = \bigO_\alpha\Big(\frac{A}{|m|^\alpha \phi(N)}\Big)\numberthis\label{eq:linear_approx_bdn_theta}
	\end{align*}
	which proves the claim.
	
	As an immediate consequence, the same estimate holds for $2\pi - \theta(m+p)$ and for $\theta(m)$. We choose a constant $C_\alpha>0$ to make the Landau's notation explicit so that $\theta(m) \leq C_\alpha \frac{A}{|m|^\alpha \phi(N)}$, then we set the function $g_N:x\mapsto x \tan\big( C_\alpha \frac{A}{x^\alpha \phi(N)} \big)$. For $N$ large enough so that $C_\alpha \frac{A}{\phi(N)}<\frac{\pi}{2}$, the map $g_N$ is well defined over $[1,+\infty[\,$ and is nondecreasing. Applying Lemma \ref{lem:grid_pts_near_line} with $g_N$ gives us the estimate, as $N\to\infty$,
	\begin{align*}
		& \card \Big\{ m \in (\Lambda-\{0\})\cap D(0,N) \, : \, \theta(m) \leq \frac{C_\alpha A}{|m|^\alpha \phi(N)} \Big\}
		\\ \leq~ & N \frac{4(1+\diam_{\vec{\Lambda}})(N \tan( C_\alpha \frac{A}{N^\alpha \phi(N)}) + \diam_{\vec{\Lambda}})}{\covol_{\vec{\Lambda}}}
		\\ =~ &\bigO_{\alpha,\Lambda} \Big( N(\frac{AN^{1-\alpha}}{\phi(N)}+1) \Big).
	\end{align*}
	Multiplying this bound by the number of lattice points $p$ described in Equation \eqref{eq:linear_approx_card_p_lambda=0} gives us the following estimate for counting these bad indices, as $N\to\infty$,
	$$
	\card (I_{N,A}^{\rm bad}) =\bigO_{\alpha,\Lambda} \Big( N \big(\frac{AN^{1-\alpha}}{\phi(N)} +1 \big)^3 \Big).
	$$
	Thus, the restriction to these bad indices in the error term $\R_{N,N',N''}^{\alpha,\Lambda,+}(f) - \mu_{N,N',N''}^+(f)$ is estimated by, as $N\to\infty$,
	\begin{equation}\label{eq:linear_approx_contrib_bad_indices}
		\bigO_{\alpha,\Lambda} \Big( \frac{\|f\|_\infty N(\frac{AN^{1-\alpha}}{\phi(N)}+1)^3 }{\psi(N)} \Big)
	\end{equation}
	
	We set $I_{N,A}^{\rm good} = I_{N,A}^+ - I_{N,A}^{\rm bad}$. Using the mean value theorem, for all $(m,p) \in I_{N,A}^{\rm good}$, since $\Im(\frac{p}{m}) \geq 0$ by definition of $I_{N,A}^{\rm good}$ and using the uniform estimate \eqref{eq:linear_approx_unif_conv_p/m}, the quantity \eqref{eq:complex_linear_approx_m,p} is bounded by
	\begin{equation}\label{eq:complex_linear_approx_meanvalue}
		\|df\|_\infty \phi(N) |m|^\alpha \, \big|\big(1+\frac{p}{m}\big)^\alpha - 1 - \frac{\alpha p}{m}\big| =\bigO_\alpha\Big(\|df\|_\infty \phi(N) \frac{|p|^2}{|m|^{2-\alpha}}\Big).
	\end{equation}
	It remains to bound from above the sum $S_{N,A} = \sum_{(m,p)\in I_{N,A}^{\rm good}} \frac{|p|^2}{|m|^{2-\alpha}}$. For that matter, we use the estimates \eqref{eq:linear_approx_unif_conv_p/m} (in the form $|p|^2 = \bigO_\alpha \big( \frac{A^2 |m|^{2-2\alpha}}{\phi(N)^2}\big)$) and \eqref{eq:linear_approx_card_p_lambda=0} then we apply again Lemma \ref{lem:sum_powers_grid} (summing over $\Lambda$, with $\beta=-\alpha$ and $x=N$), which gives us an upper bound for the sum $S_{N,A}$ as follows
	\begin{align*}
		S_{N,A} & \leq \sum_{\substack{m \in \Lambda \\ 0<|m|\leq N}} \frac{1}{|m|^{2-\alpha}} \bigO_\alpha \Big( \frac{A^2 |m|^{2-2\alpha}}{\phi(N)^2} \Big) \card \{ p \in \vec{\Lambda}-\{0\} \, : \, (m,p) \in I_{N,A}^{\rm good}\} \\
		& \leq \sum_{\substack{m \in \Lambda \\ 0<|m|\leq N}} \frac{1}{|m|^\alpha} \bigO_\alpha \Big( \frac{A^2}{\phi(N)^2} \Big) \bigO_{\alpha,\Lambda} \Big( \Big(\frac{AN^{1-\alpha}}{\phi(N)} + 1\Big)^2\Big) \\
		& =  \bigO_{\alpha,\Lambda} \Big( \frac{A^2\big(\frac{AN^{1-\alpha}}{\phi(N)} + 1\big)^2}{\phi(N)^2}\Big) \sum_{\substack{m \in \Lambda \\ 0<|m|\leq N}} \frac{1}{|m|^\alpha} \\
		& =\bigO_{\alpha,\Lambda} \Big( \frac{A^2 N^{2-\alpha} \big(\frac{AN^{1-\alpha}}{\phi(N)} + 1\big)^2}{\phi(N)^2}\Big).
	\end{align*}
	This estimate together with the one over $I_{N,A}^{\rm bad}$ given in Equation \eqref{eq:linear_approx_contrib_bad_indices}, and the bound given in Equation \eqref{eq:complex_linear_approx_meanvalue} gives us, as $N\to\infty$,
	{\small
	$$
	\R_{N,N',N''}^{\alpha,\Lambda,+}(f) - \mu_{N,N',N''}^+(f) = \bigO_{\alpha,\Lambda} \Big( \frac{A^2 \|df\|_\infty N^{2-\alpha} \big(\frac{AN^{1-\alpha}}{\phi(N)} + 1 \big)^2}{\psi(N) \phi(N)} + \frac{\|f\|_\infty N(\frac{AN^{1-\alpha}}{\phi(N)}+1)^3 }{\psi(N)} \Big).
	$$
	}Since the renormalization factor is given by the formula $\psi(N)=\big(\frac{N^{2-\alpha}}{\phi(N)}\big)^2$, the latter estimate can be simplified (using the inequality $(a+b)^k \leq 2^k (a^k + b^k)$ for real numbers $a,b >0$ and $k\in\NN$) and finally rewritten, as $N\to\infty$,
	{\footnotesize
	$$
	\R_{N,N',N''}^{\alpha,\Lambda,+}(f) - \mu_{N,N',N''}^+(f) = \bigO_{\alpha,\Lambda} \Big( A^2 \|df\|_\infty \big(\frac{A^2}{N^\alpha\phi(N)} + \frac{\phi(N)}{N^{2-\alpha}}\big) + \|f\|_\infty \big(\frac{A^3}{N^\alpha\phi(N)}+\frac{\phi(N)^2}{N^{3-2\alpha}}\big) \Big).
	$$
	}
\end{proof}

\begin{remark}\label{rk:complex_linear_approx_case_lambda_0}
	{\rm
	If $\lambda=0$, the error term in Lemma \ref{lem:complex_linear_approx} becomes, as $N\to\infty$,
	\begin{align*}
	\R_{N,N',N''}^{\alpha,\Lambda,+}(f) - \mu_{N,N',N''}^+(f) & = \bigO_{\alpha,\Lambda} \Big( \|df\|_\infty \frac{A^4}{N^\alpha\phi(N)}
	+ \|f\|_\infty \frac{A^3}{N^\alpha\phi(N)} \Big)
	\\ & = \bigO_{\alpha,\Lambda} \Big( \frac{A^4 (\|f\|_\infty+\|df\|_\infty)}{N^\alpha \phi(N)} \Big).
	\end{align*}
	}
\end{remark}

\subsection{Riemann sum approximation}\label{ssec:riem_sum_lemma}
The last lemma is a standard Riemann sum approximation. Again, let $\F$ be a closed fundamental parallelogram of $\vec{\Lambda}$ containing $0$ and of diameter $\diam_{\vec{\Lambda}}$. 
\begin{lemma}\label{lem:complex_riemann_approx}
	Let $\delta\in\CC^*$ and $F$ be a finite subset of $\Lambda$. Then, for every function $f \in C^1(\CC)$, we have the inequality
	$$ \Big| |\delta|^2 \covol_{\vec{\Lambda}} \sum_{m \in F} f(m \delta) - \int_{\underset{m\in F}{\bigcup}\delta(m+\F)} f(z)\, dz \Big| \leq \card(F) |\delta|^3 \|df_{\, | \underset{m\in F}{\bigcup}\delta(m+\F)}\|_\infty \diam_{\vec{\Lambda}}.$$
\end{lemma}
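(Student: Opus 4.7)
The plan is a direct Riemann sum estimate, one lattice cell at a time. First I would observe that since $\F$ is a fundamental parallelogram of $\vec{\Lambda}$, the translates $m+\F$ for $m \in F \subset \Lambda$ are pairwise disjoint up to a Lebesgue-null set (the boundary identifications coming from the lattice), and hence so are their images under the affine map $z \mapsto \delta z$. Therefore the integral over the union decomposes as
\begin{equation*}
\int_{\bigcup_{m\in F}\delta(m+\F)} f(z)\, dz = \sum_{m \in F} \int_{\delta(m+\F)} f(z)\, dz.
\end{equation*}
Combined with the fact that $\Leb_\CC(\delta(m+\F)) = |\delta|^2 \covol_{\vec{\Lambda}}$, the quantity to estimate rewrites as
\begin{equation*}
\Big| \sum_{m\in F} \int_{\delta(m+\F)} \bigl( f(m\delta) - f(z) \bigr) \, dz \Big|,
\end{equation*}
so it suffices to bound the pointwise difference $|f(z)-f(m\delta)|$ on each cell $\delta(m+\F)$.

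Next I would use the mean value inequality. Fix $m\in F$ and $z \in \delta(m+\F)$, and write $z = \delta(m+u)$ with $u \in \F$. Since $0 \in \F$ and $\F$ has diameter $\diam_{\vec{\Lambda}}$, we have $|u|\leq \diam_{\vec{\Lambda}}$, hence $|z-m\delta|=|\delta||u|\leq |\delta|\diam_{\vec{\Lambda}}$. Applying the mean value inequality along the segment from $m\delta$ to $z$ (which lies inside $\delta(m+\F)$ by convexity of $\F$, if $\F$ is taken to be a parallelogram; otherwise one works with a path within the union, since $df$ is controlled on the whole union) gives
\begin{equation*}
|f(z)-f(m\delta)| \leq \|df_{\,|\bigcup_{m\in F}\delta(m+\F)}\|_\infty \, |\delta| \, \diam_{\vec{\Lambda}}.
\end{equation*}

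Finally I would integrate this pointwise bound over $\delta(m+\F)$ and sum over $m\in F$. Each cell contributes at most $|\delta|^2\covol_{\vec{\Lambda}} \cdot \|df\|_\infty |\delta| \diam_{\vec{\Lambda}}$, and summing over the $\card(F)$ cells yields the claimed inequality (up to the harmless factor $\covol_{\vec{\Lambda}}$, which the author has evidently absorbed). There is no real obstacle here: the only subtlety is the convexity/path-connectedness argument needed to apply the mean value inequality with the supremum of $\|df\|$ taken only on the union $\bigcup_{m\in F}\delta(m+\F)$, which is why I would choose $\F$ to be a parallelogram (hence convex) containing $0$, so that the segment from $m\delta$ to any point of $\delta(m+\F)$ stays inside that single cell.
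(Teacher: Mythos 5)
Your argument is correct and is essentially the paper's own (terse) proof, fleshed out: decompose the integral over the cells $\delta(m+\F)$, which are pairwise disjoint up to a null set, note that each cell has Lebesgue measure $|\delta|^2 \covol_{\vec{\Lambda}}$, bound the per-cell error by the mean value inequality on the convex set $\delta(m+\F)$, and sum over $m\in F$. Your hedge about non-convex fundamental domains is unnecessary here: the paper takes $\F$ to be a closed fundamental \emph{parallelogram}, hence convex, so the segment from $m\delta$ to any $z \in \delta(m+\F)$ already stays inside that single cell and the supremum over the union is more than enough.

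The one point you should not brush aside is the $\covol_{\vec{\Lambda}}$ factor. Your computation, exactly like the paper's one-line sketch, yields the bound $\card(F)\,|\delta|^3\,\covol_{\vec{\Lambda}}\,\|df_{\,|\bigcup_{m\in F}\delta(m+\F)}\|_\infty\,\diam_{\vec{\Lambda}}$, which carries an extra factor of $\covol_{\vec{\Lambda}}$ relative to the stated inequality. Nothing in the argument absorbs that factor; it appears simply to have been dropped in the lemma statement. You are right that the discrepancy is harmless, but the reason deserves to be made explicit: this lemma is invoked only inside $\bigO_{\alpha,\Lambda}(\cdot)$ error estimates, where any constant depending solely on $\Lambda$ (in particular $\covol_{\vec{\Lambda}}$) is swallowed by the implicit constant. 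Saying that cleanly is better than attributing the mismatch to a silent choice by the author.
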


\begin{proof}
Notice that, for all $m\in F$, we have $\Leb_\CC(\delta(m+\F)) = \covol_{\delta\vec{\Lambda}} = |\delta|^2\covol_{\vec{\Lambda}}$. A direct application of the mean value inequality for $f$ on the convex sets $\delta(m+\F)$ and then summing over $m\in F$ ends the proof.
\end{proof}
We now have enough tools to prove our effective theorem.


\section{Proof of Theorem \ref{th:effective_cv_complex_correlations}}\label{sec:proof_thm}
We have three different regimes for the scaling factor and the proof will be divided accordingly. Recall that the renormalization is given by the formula $\psi(N)=\big(\frac{N^{2-\alpha}}{\phi(N)}\big)^2$. Let $f \in C_c^1(\CC)$ and choose $A>1$ such that $\supp f \subset D(0,A)$.
	
\subsection[regime +infinite]{Regime $\frac{\phi(N)}{N^{1-\alpha}} \underset{N\to\infty}{\longrightarrow} +\infty$}\label{ssec:regime_infinite}
	
Compared to both other regimes where we get an asymptotic bound for the speed of convergence, this one is particular as we will asymptotically prove the equality $\R_{N,N',N''}^{\alpha,\Lambda,\lvl}(f)=0$ representing a drastic loss of mass at infinity. For that reason, we will not use whole lemmas from Section \ref{sec:lemmas} but only elements of their proof. For $N$ large enough (independently on $N',N''$), we will first prove the equality $\R_{N,N',N''}^{\alpha,\Lambda,+}(f)=0$ (hence $\R_{N,N',N''}^{\alpha,\Lambda,-}(f)=0$ by symmetry), then we will take care of the diagonal terms $(m,p)\in I_N$, that is to say those which verify $\frac{m+p}{m} \in \RR$.
	
Fix $k\in\ZZ$. Recall that the set $I_N^+$ is defined so that, for all indices $(m,p) \in I_N^+$, the formula $(m+p)^{[\alpha,k]}-m^{[\alpha,k]} = m^{[\alpha,k]} ((1+\frac{p}{m})^\alpha - 1)$ holds. Our goal is to prove that, for $N$ large enough independently on $k$, we have the inequality
$$\big| \big( 1+\frac{p}{m} \big)^\alpha - 1 \big| \geq \frac{A}{|m|^\alpha \phi(N)}.$$
Using the notation $I_{N,A}^+$ from the proof of Lemma \ref{lem:complex_linear_approx}, the indices $(m,p)\in I_N^+$ failing to verify the former inequality are in this set $I_{N,A}^+$ by Equation \eqref{eq:complex_bounds_INA}. Thus it is sufficient to prove that, for $N$ large enough, we have $I_{N,A}^+=\emptyset$. For all $(m,p) \in I_{N,A}^+$, we can use the estimate $|p|=\bigO_\alpha(\frac{AN^{1-\alpha}}{\phi(N)})$, that follows from Equation \eqref{eq:linear_approx_unif_conv_p/m}. Thanks to the convergence $\frac{N^{1-\alpha}}{\phi(N)}\to 0$ as $N\to\infty$ and the inequality $|p| \geq \sys_{\vec{\Lambda}}$ for all $p \in \vec{\Lambda}-\{0\}$, we have indeed $I_{N,A}^+=\emptyset$ for $N$ large enough. For such ranks $N$ and for all $N',N''\in\NN$, this immediately gives the equality $\R_{N,N',N''}^{\alpha,\Lambda,+}(f)=0$. With the same condition on the ranks $N$, $N'$ and $N''$, the equality $\R_{N,N',N''}^{\alpha,\Lambda,-}(f)=0$ follows from the symmetry described in Equation \eqref{eq:symmetry}.
	
The same argument, this time using the set of indices $I_{N,A}^=$ defined in the proof of Lemma \ref{lem:negligible_diag_points} and the estimate \eqref{eq:bnd_diag_p}, gives the result over the diagonal terms. After summing over $I_{N,A}^+ \cup I_{N,A}^- \cup I_{N,A}^=$, we have finally proven the equality, for $N$ large enough and for all $N',N'' \in \NN$,
$$
\R_{N,N',N''}^{\alpha,\Lambda,\lvl}(f)=0.
$$

\subsection{Local changes of variables}\label{ssec:change_var}
\subsubsection{Riemann sums argument}\label{sssec:riem_sum_change_var}
In the two other regimes for the scaling factor $\phi$, thanks to the symmetry equation \eqref{eq:symmetry} and to Lemmas \ref{lem:negligible_diag_points} and \ref{lem:complex_linear_approx}, it is sufficient to study the behaviour of the sequence $(\mu_{N,N',N''}^+(f))_{N,N',N''\in\NN}$ defined by the formula that we recall
$$
\mu_{N,N',N''}^+(f) = \frac{1}{(N'+N'')\psi(N)} \sum_{k=-N'}^{N''-1} \sum_{p\in\vec{\Lambda}-\{0\}} \sum_{\substack{m \in \Lambda \\ (m,p)\in I_N^+}} f\Big( \frac{\phi(N) \alpha p}{m^{[1-\alpha,k]}}  \Big),
$$
where $I_N^+=\{ (m,p) \in \Lambda \times ( \vec{\Lambda}-\{0\}) \, : \, 0<|m|,|m+p|\leq N \mbox{ and } \theta(m+p) > \theta(m) \}$.. In order for an index $(m,p)$ to contribute to this sum, it has to verify, as $N \to \infty$,
\begin{equation}\label{eq:bound_p_m}
	\big|\frac{p}{m}\big| \leq \frac{A}{\alpha |m|^\alpha \phi(N)} \; \mbox{ hence } \; |p| \leq \frac{A N^{1-\alpha}}{\alpha \phi(N)}.
\end{equation}
In order to see the measure $\mu_{N,N',N''}$ as a weighted Riemann sum over the lattice $\vec{\Lambda}$, we will use the open angular sector (illustrated in Figure \ref{fig:geom_symm_after_change_var})
$$
C_{p,k} = \{ z \in \CC^* \, : \, \arg(z) \in \theta(p) - (1-\alpha)2\pi\, ]k,k+1[\, +2\pi\ZZ \},
$$ 
the ray $L_{p,k} = \{ z \in \CC^* \, : \, \arg(z) \equiv - \frac{\theta(p)}{1-\alpha}\}$ and the family of change of variables $(h_{p,k})_{p \in \vec{\Lambda}, \, k\in\ZZ}$ defined by
$$
\begin{array}{rcl}
	h_{p,k} : C_{p,k} & \to & \CC^* - L_{p,k} \\
	z & \mapsto & |z|^{-\frac{1}{1-\alpha}} e^{-\frac{i\omega_k}{1-\alpha}} \mbox{ with } \arg(z) \equiv \omega_k \in \theta(p) - (1-\alpha)2\pi\, ]k,k+1[\,.
\end{array}
$$
In other words, these changes of variables are restrictions to $C_{p,k}$ of the maps
\begin{equation}\label{eq:formula_log_change_var}
	h_{p,k}: z \mapsto \exp \Big( -\frac{1}{1-\alpha} ( \log(ze^{i(-\theta(p)+2\pi(k+1)(1-\alpha))}) + i (\theta(p) - 2\pi(k+1)(1-\alpha))) \Big)
\end{equation}
where $\log$ is the nonstandard branch of the logarithm on $\CC-\RR_+$ defined in the beginning of Section \ref{ssec:levels}. Let $p\in\vec{\Lambda}-\{0\}$ and $k\in\ZZ$. The map $h_{p,k}$ is biholomorphic and computing its derivative, using the formula \eqref{eq:formula_log_change_var}, gives us $h_{p,k}' : z \mapsto - \frac{1}{1-\alpha} \frac{h_{p,k}(z)}{z}$, whose modulus is $z \mapsto \frac{1}{1-\alpha} |z|^{-\frac{2-\alpha}{1-\alpha}}$. We set
$$
\omega_{p,k} = \sum_{\substack{m \in \Lambda, \, m\notin \RR_+ \\ (m,p)\in I_N^+}} \Delta_{\frac{\phi(N) \alpha p}{m^{[1-\alpha,k]}}}
$$
allowing us to decompose the measure $\mu_{N,N',N''}^+$ into sums where $k$ and $p$ are fixed, then apply a different change of variables on each part. The condition $m\notin \RR_+$ is introduced so that the points $\frac{\phi(N)\alpha p }{m^{[1-\alpha,k]}}$ all belong to $C_{p,k}$ and not only to its closure. In order to add or remove this condition at will, notice the inequality
\begin{equation}\label{eq:error_mu_N_sum_omega_pk}
	\card(\Lambda \cap \bar{D}(0,N) \cap \RR_+) \leq \frac{N}{\sys_{\vec{\Lambda}}}+1.
\end{equation}
For all $m\in\Lambda$ such that $(m,p) \in I_N^+$ and $m\notin \RR_+$, the change of variable $h_{p,k}$ is designed for the following computation:
\begin{align*}
	h_{p,k} \big( \frac{\phi(N) \alpha p}{m^{[1-\alpha,k]}} \big) & = \big( \frac{\phi(N)\alpha |p|}{|m|^{1-\alpha}} \big)^{-\frac{1}{1-\alpha}} \, h_{p,k} \big( e^{i(\theta(p)-(1-\alpha)(\theta(m)+2\pi k))} \big)
	\\ & = \big( \frac{\phi(N)\alpha |p|}{|m|^{1-\alpha}} \big)^{-\frac{1}{1-\alpha}} \, e^{-\frac{1}{1-\alpha} i(\theta(p) - (1-\alpha)(\theta(m) + 2\pi k))} = (\phi(N)\alpha p)^{-\frac{1}{1-\alpha}} \, m
\end{align*}
where we recall the notation $z^{-\frac{1}{1-\alpha}} = z^{[-\frac{1}{1-\alpha},0]}$. Consequently, we have the formula
\begin{equation}\label{eq:h_pk_omega_pk}
	(h_{p,k})_* \omega_{p,k} = \sum_{\substack{m \in \Lambda, \; m\notin \RR_+ \\ (m,p)\in I_N^+}} \Delta_{m \delta_{N,p}} \mbox{ where } \delta_{N,p} = (\phi(N)\alpha p)^{-\frac{1}{1-\alpha}}.
\end{equation}
Using Equation \eqref{eq:error_mu_N_sum_omega_pk}, the condition $m\notin \RR_+$ in the latter formula can be removed up to an extra error term of order $\bigO_\alpha(\frac{\|f\|_\infty N}{\psi(N)\sys_{\vec{\Lambda}}})$, thus we forget about it until Equation \eqref{eq:finalestimate_change_var}.

In order to compare every measure $(h_{p,k})_* \omega_{p,k}$ with a weighted Riemann sum, we have to establish which part of $\CC$ is occupied by the indices $m$ in its definition. Recall that $I_N^+$ denotes the subset of $\Lambda\times (\vec{\Lambda}-\{0\})$ with conditions $0 < |m|, |m+p| \leq N$ and $\theta(m+p) > \theta (m)$. Putting aside the condition $|m+p|\leq N$ for the moment, we claim that such indices $m$ approximately occupy a half-disk (depending on $p$), namely half of the closed disk $D(0,N)$. Let $B_p$ denote the complex band $[-1,0]p+\RR_+$. More precisely, we claim that, modulo the complex subset $B_p \cap \bar{D}(0,N)$, the set
$$
D_{N,p} = \{ z \in \CC^*-\{-p\} \, : \, |z| \leq N \mbox{ and } \theta(z+p) > \theta(z) \}
$$
is the half-disk centred at the origin, of radius $N$ and with the argument condition $\theta(z) \in \, ]\theta(p)-\pi, \theta(p)[\, + 2\pi\ZZ$. The claim follows from a straightforward study of (the sign of) the function $z \mapsto \theta(z+p)-\theta(z)$ which is continous on $\CC-(\RR_+ \cup (-p+\RR_+) )$, which can be computed explicitly on the circle $C(0,|p|)$ and whose zeros belong to the line $\RR p$. See Figure \ref{fig:approx_half_disk} for a summary of this study. A quantitative comparison between $D_{N,p}$ and the associated half-disk will be stated in Equation \eqref{eq:leb_meas_symm_diff_after_chg_var}. 
\begin{figure}[ht]
	\centering
	\includegraphics[height=6.5cm]{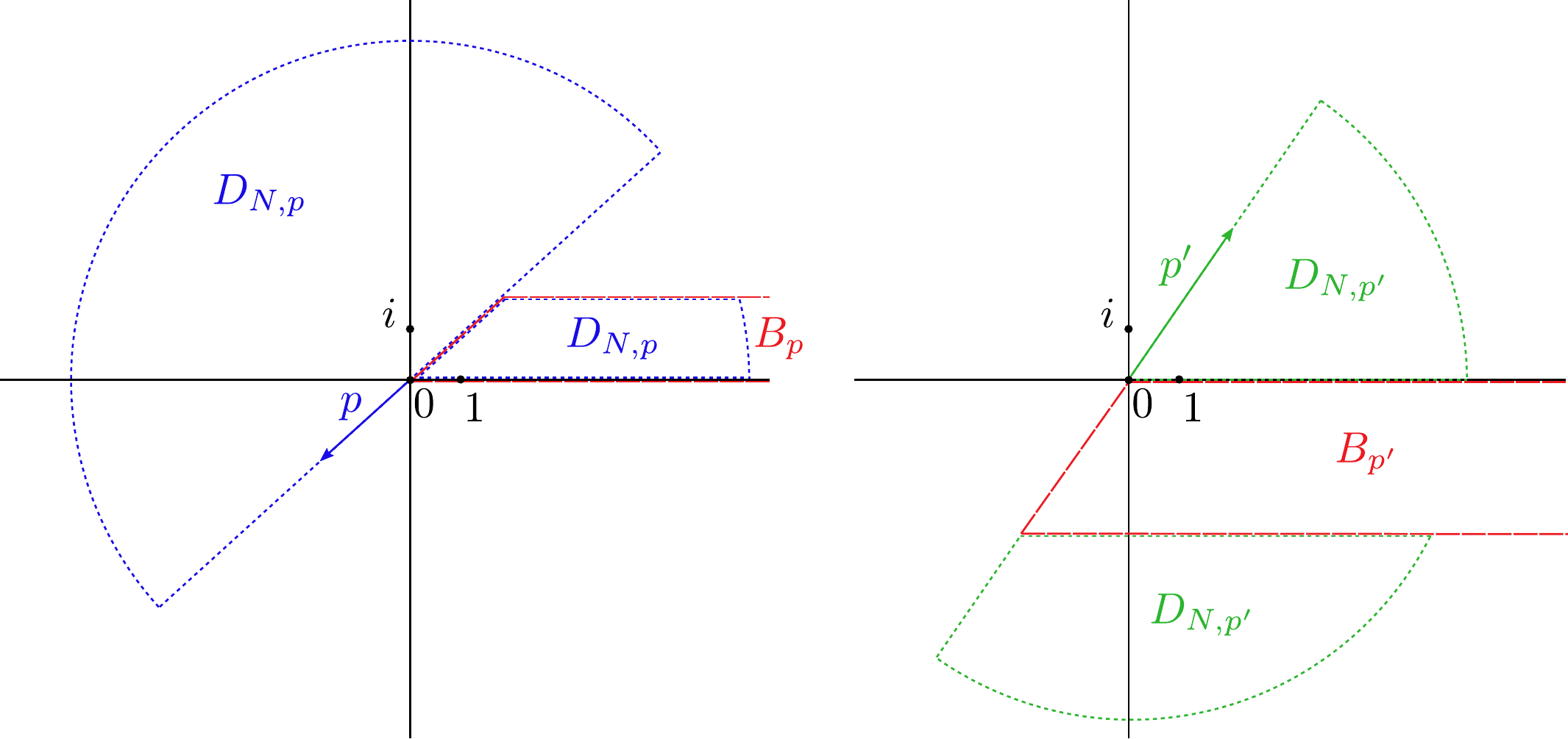}
	\caption{An illustration of a set $D_{N,p}$ with $\theta(p) \geq \pi$, and $D_{N,p'}$ with $\theta(p')<\pi$.}
	\label{fig:approx_half_disk}
\end{figure}

In order to remove the condition $|m+p| \leq N$ in $I_N^+$ and to compute the associated error term, first notice that failing this condition implies that $N-|p| < |m| \leq N$. Using Lemma \ref{lem:sum_powers_grid} twice (summing over $\Lambda$ with $\beta=0$, first with $x=N$ then with $x=N-|p|$), we obtain, as $N\to\infty$ with $N \geq |p|$,
\begin{align*}
\card(\Lambda \cap D_{N,p} - \{m\in\Lambda \, : \, (m,p) \in I_N^+\}) = \, & \frac{\pi(N^2 - (N-|p|)^2)}{\covol_{\vec{\Lambda}}} + \bigO \big(\frac{1+\diam_{\vec{\Lambda}}^2}{\covol_{\vec{\Lambda}}} N\big) 
\\ = \, & \bigO_{\Lambda} \Big( (|p| + 1) N\Big).
\end{align*}
Thanks to the inequality $|p| \leq \frac{AN^{1-\alpha}}{\alpha \phi(N)}$ from Equation \eqref{eq:bound_p_m}, the condition $N \geq |p|$ in the latter estimate is verified for $N$ large enough, uniformly on such indices $p$. Using Lemma \ref{lem:sum_powers_grid} (summing over $\vec{\Lambda}$ with $\beta=0$ and $x= \frac{AN^{1-\alpha}}{\alpha \phi(N)}$), we can replace the condition $(m,p) \in I_N^+$ by $m \in \Lambda \cap D_{N,p}$ in the definition of $\mu_{N,N',N''}(f)$ up to the error term, as $N\to\infty$,
\begin{align*}
	& \mu_{N,N',N''}^+(f) - \frac{1}{(N'+N'')\psi(N)} \sum_{k=-N'}^{N''-1} \sum_{p\in\vec{\Lambda}-\{0\}} \sum_{m\in \Lambda \cap D_{N,p}} f(\frac{\phi(N)\alpha p}{m^{[1-\alpha,k]}})
	\\ = \; & \bigO_{\alpha,\Lambda }\Big( \frac{\|f\|_\infty(\frac{AN^{1-\alpha}}{\phi(N)}+1)(\frac{AN^{1-\alpha}}{\phi(N)})^2 N}{\psi(N)} \Big). \numberthis\label{eq:estime_remove|m+p|lessN}
\end{align*}
This invites us to define the measures
$$
\widetilde{\omega}_{p,k} = \sum_{\substack{m \in \Lambda, \, m\notin \RR_+ \\ m \in \Lambda \cap D_{N,p}}} \Delta_{\frac{\phi(N) \alpha p}{m^{[1-\alpha,k]}}}
$$
and $\widetilde{\mu}_{N,N',N''}^+ = \frac{1}{(N'+N'')\psi(N)}\sum_{k=-N'}^{N''-1} \sum_{p\in\vec{\Lambda}-\{0\}} \widetilde{\omega}_{p,k}$. Using Equations \eqref{eq:error_mu_N_sum_omega_pk} and \eqref{eq:estime_remove|m+p|lessN}, we obtain an error term, as $N\to\infty$,
{\footnotesize
\begin{equation}\label{eq:error_mu_tilde_N_sum_omega_tilde_pk}
	\widetilde{\mu}_{N,N',N''}^+(f) - \mu_{N,N',N''}^+(f) = \bigO_{\alpha,\Lambda} \Big( \frac{\|f\|_\infty(\frac{AN^{1-\alpha}}{\phi(N)}+1)(\frac{AN^{1-\alpha}}{\phi(N)})^2 N}{\psi(N)} + \frac{\|f\|_\infty N}{\psi(N)} \Big).
\end{equation}
}Let $\F$ be a fundamental domain of $\vec{\Lambda}$ containing $0$ and of diameter $\diam_{\vec{\Lambda}}$. An approximation of $D_{N,p}$ is given by $\widetilde{D}_{N,p}=\bigcup_{m\in\Lambda\cap D_{N,p}}(m+\F)$. We apply Lemma \ref{lem:complex_riemann_approx} (on the $C^1$ function $f_{p,k} = f \circ h_{p,k}^{-1}$, with $\delta=\delta_{N,p}$ and $F=\Lambda \cap D_{N,p}$), then we use Lemma \ref{lem:sum_powers_grid} (summing over $\Lambda$ with $\beta=0$ and $x=N$ since we have the inclusion $D_{N,p}\subset \bar{D}(0,N)$). This grants us the estimate, as $N\to\infty$,
{\small
\begin{align*}
\Big| (h_{p,k})_* \widetilde{\omega}_{p,k}(f_{p,k}) - \frac{\int_{\delta_{N,p}\widetilde{D}_{N,p}} f_{p,k}(z) \, dz}{|\delta_{N,p}|^2 \covol_{\vec{\Lambda}}} \Big| & \leq \frac{\diam_{\vec{\Lambda}}}{\covol_{\vec{\Lambda}}}  \|df_{p,k}\,_{| \delta_{N,p} \widetilde{D}_{N,p}}\|_\infty \, |\delta_{N,p}| \card(\Lambda \cap D_{N,p})
\\ & = \bigO_{\alpha,\Lambda} \Big( \frac{\|df_{p,k}\,_{| \delta_{N,p} \widetilde{D}_{N,p}}\|_\infty N^2 }{(\phi(N) |p|)^{\frac{1}{1-\alpha}}} \Big). \numberthis\label{eq:estimate_riem_sum_1}
\end{align*}
}The set $\widetilde{D}_{N,p}$ is "approximately" $D_{N,p}$, and is "approximately" a half-disk as we shall now see. Let us use the notation, for all $z_0\in\CC$, $r>0$, $\omega \in \RR$, 
$$H(z_0, r, \omega) = \{z\in\CC \, : \, |z-z_0| \leq r \mbox{ and } \arg(z-z_0) \in \, ]\omega-\pi,\omega[ \, + 2\pi\ZZ \}$$
which is a half-disk centred at $z_0$, of radius $r>0$, with an argument (relative to its centre) determined by $\omega$ (more precisely by its image in $\RR/2\pi\ZZ$). We want to compare the complex subset $\widetilde{D}_{N,p}$ with the half-disk $H(0,N,\theta(p))$. Let $u$ be the complex number verifying $\arg(u)=\arg(p)+\frac{\pi}{2}$ and $|u|=\diam_{\vec{\Lambda}}$. Let $\widetilde{B}_{p,N}$ denote the $\diam_{\vec{\Lambda}}$-neighbourhood of the band $B_p \cap \bar{D}(0,N)$. Using the triangle inequality, modulo the set $\widetilde{B}_{p,N}$, we have the following inclusions
$$
\widetilde{D}_{N,p} \subset H(u,N+2\diam_{\vec{\Lambda}},\theta(p)) \; \mbox{ and } \; H(-2u, N-3\diam_{\vec{\Lambda}}, \theta(p)) \subset \widetilde{D}_{N,p}.
$$
(We don't necessarily have $H(-u, N-2\diam_{\vec{\Lambda}}, \theta(p)) \subset \widetilde{D}_{N,p} \cup \widetilde{B}_{p,N}$ in the case where $\Lambda$ contains $0$, since $0$ never belongs to $\Lambda \cap D_{N,p}$ which is the set of indices we defined $\widetilde{D}_{N,p}$ with). Thus, the symmetric difference that is of interest here verifies, modulo $\widetilde{B}_{p,N}$,
\begin{align*}
\widetilde{D}_{N,p} \Delta H(0,N,\theta(p)) & = (\widetilde{D}_{N,p} \cup H(0,N,\theta(p)))-(\widetilde{D}_{N,p} \cap H(0,N,\theta(p))) 
\\ & \subset H(u,N+2\diam_{\vec{\Lambda}},\theta(p)) - H(-2u, N-3\diam_{\vec{\Lambda}}, \theta(p)).
\end{align*}
Since the set $\widetilde{B}_{p,N}$ has Lebesgue measure bounded by $\bigO((|p|+\diam_{\vec{\Lambda}})N)$, the latter inclusion modulo $\widetilde{B}_{p,N}$ gives the estimate, as $N\to\infty$ (with $N \geq 3\diam_{\vec{\Lambda}}$ and independently on $p \in \vec{\Lambda}-\{0\}$),
{\small
\begin{align*}
	\Leb_\CC(\widetilde{D}_{N,p} \Delta H(0,N,\theta(p))) & \leq \frac{\pi}{2}((N+2\diam_{\vec{\Lambda}})^2 - (N-3\diam_{\vec{\Lambda}})^2) + \bigO((|p|+\diam_{\vec{\Lambda}})N)
	\\ & = \bigO(\diam_{\vec{\Lambda}} N) + \bigO((|p|+\diam_{\vec{\Lambda}})N) = \bigO_\Lambda((|p|+1)N). \numberthis\label{eq:leb_meas_symm_diff_after_chg_var}
\end{align*}
}

Let $R_N=\frac{N+\diam_{\vec{\Lambda}}}{(\phi(N)\alpha|p|)^{\frac{1}{1-\alpha}}}$. From the estimates \eqref{eq:estimate_riem_sum_1} and \eqref{eq:leb_meas_symm_diff_after_chg_var}, we derive, as $N\to\infty$,
{\footnotesize
\begin{align*}
	& \Big| (h_{p,k})_* \widetilde{\omega}_{p,k}(f_{p,k}) - \frac{\int_{\delta_{N,p}H(0,N,\theta(p))} f_{p,k}(z) \, dz}{|\delta_{N,p}|^2 \covol_{\vec{\Lambda}}} \Big|
	\\ = \, &\bigO_{\alpha, \Lambda} \Big( \frac{\|df_{p,k}\,_{| \delta_{N,p} \widetilde{D}_{N,p}}\|_\infty N^2 }{(\phi(N) |p|)^{\frac{1}{1-\alpha}}} \Big) + \frac{|\int_{\delta_{N,p}\widetilde{D}_{N,p}} f_{p,k}(z) \, dz - \int_{\delta_{N,p}H(0,N,\theta(p))} f_{p,k}(z) \, dz|}{\covol_{\vec{\Lambda}} |\delta_{N,p}|^2}
	\\ = \, & \bigO_{\alpha, \Lambda} \Big( \frac{\|df_{p,k}\,_{| \delta_{N,p} \widetilde{D}_{N,p}}\|_\infty N^2 }{(\phi(N) |p|)^{\frac{1}{1-\alpha}}} \Big)
	\\ & + \frac{\|f_{p,k} \, _{| \delta_{N,p} (\widetilde{D}_{N,p} \cup H(0,N,\theta(p)))}\|_\infty \Leb_\CC(\delta_{N,p} \widetilde{D}_{N,p} \Delta \delta_{N,p} H(0,N,\theta(p)))}{\covol_{\vec{\Lambda}} |\delta_{N,p}|^2}
	\\ = \, & \bigO_{\alpha, \Lambda} \Big( \frac{
		\|df_{p,k}\,_{| D(0,R_N)} \|_\infty N^2}{(\phi(N) |p|)^{\frac{1}{1-\alpha}}} +
		\| f_{p,k}\,_{| D(0,R_N)} \|_\infty (|p|+1)N \Big). \numberthis\label{eq:estimate_riem_sum_2}
\end{align*}
}We set
$$HC_{p,k} = h_{p,k}^{-1}\big(\delta_{N,p}H(0,N,\theta(p)) - L_{p,k}\big) = h_{p,k}^{-1}\big(H(0,|\delta_{N,p}|N, -\frac{\alpha}{1-\alpha} \theta(p))-L_{p,k}\big)$$
(where we used the equality $\arg(\delta_{N,p})+\theta(p) \equiv -\frac{1}{1-\alpha}\theta(p) + \theta(p) = -\frac{\alpha}{1-\alpha}\theta(p)$ for the right-hand equality). We will geometrically describe $HC_{p,k}$ in Section \ref{sssec:geom_desc_symm}, and see that this set is the intersection of an angular sector (which turns out to be half of $C_{p,k}$) and the complementary set $\CC-\bar{D}(0,|\delta_{N,p}N|^{-(1-\alpha)})=\CC-\bar{D}(0,\frac{\alpha |p| \phi(N)}{N^{1-\alpha}})$. Recall the formula $f_{p,k} = f \circ h_{p,k}^{-1}$ and that the modulus of $h_{p,k}'$ is $z \mapsto \frac{1}{1-\alpha} |z|^{-\frac{2-\alpha}{1-\alpha}}$. Hence the Jacobian of $h_{p,k}$ is $z\mapsto \frac{1}{(1-\alpha)^2} |z|^{-\frac{4-2\alpha}{1-\alpha}}$. We define
\begin{equation}\label{eq:def_nu_{p,k}}
	\nu_{p,k}^+(f) = \frac{1}{(1-\alpha)^2|\delta_{N,p}|^2 \covol_{\vec{\Lambda}}} \int_{HC_{p,k}} f(z)|z|^{-\frac{4-2\alpha}{1-\alpha}} \,dz,
\end{equation}
and
$$\nu_{N,N',N''}^+ = \frac{1}{(N'+N'')\psi(N)} \sum_{k=-N'}^{N''-1} \sum_{p\in\vec{\Lambda}-\{0\}}\nu_{p,k}^+.$$
Thanks to the inclusion $\supp{f} \subset D(0,A)$ and the formula $|h_{p,k}^{-1}| : z \mapsto |z|^{-(1-\alpha)}$, we have the inequalities, for all $N\in\NN$,
$$\|f_{p,k} \, _{|D(0,R_N)}\|_\infty \leq \|f\|_\infty \, \mbox{ and } \, \|df_{p,k}\,_{| D(0,R_N)} \|_\infty \leq (1-\alpha)A^{(1-\alpha)(2-\alpha)} \|df\|_\infty \leq A^2 \|df\|_\infty.$$
Let $\e_\alpha$ be the function $1+|\log|$ if $\alpha = \frac{1}{2}$, and the constant function $1$ otherwise. Combining Equations \eqref{eq:estimate_riem_sum_2} and \eqref{eq:error_mu_N_sum_omega_pk} (to remove the condition $m\notin\RR_+$ in the definition of $\widetilde{\omega}_{p,k}$), applying the change of variable formula, and using Lemma \ref{lem:sum_powers_grid} (summing over $\vec{\Lambda}$ with $\beta=0$ and $x=\frac{AN^{1-\alpha}}{\alpha \phi(N)}$ thanks to Equation \eqref{eq:bound_p_m}), we compute the estimate, as $N\to\infty$,
{\small
\begin{align*}
	& \widetilde{\mu}_{N,N',N''}^+(f) + \bigO_\alpha \big( \frac{\|f\|_\infty N}{\psi(N)\sys_{\vec{\Lambda}}} \big) - \nu_{N,N',N''}^+(f)
	\\ = \, & \sum_{\substack{p\in\vec{\Lambda}-\{0\}\\ |p| \leq \frac{AN^{1-\alpha}}{\alpha \phi(N)}}}
		\bigO_{\alpha, \Lambda}\Big( 
		\frac{\| f_{p,k}\,_{| D(0,R_N)} \|_\infty (|p|+1)N}{\psi(N)} +
		\frac{\|df_{p,k}\,_{| D(0,R_N)} \|_\infty N^2}{\psi(N) (\phi(N) |p|)^{\frac{1}{1-\alpha}}}
		\Big)
	\\ = \; & \bigO_{\alpha, \Lambda} \Big(
		\frac{\| f \|_\infty (\frac{AN^{1-\alpha}}{\phi(N)}+1)N(\frac{AN^{1-\alpha}}{\phi(N)})^2}{\psi(N)}
		+ \frac{ A^2 \|df\|_\infty N^2}{\psi(N) \phi(N)^{\frac{1}{1-\alpha}}} \sum_{\substack{p\in\vec{\Lambda}-\{0\}\\ |p| \leq \frac{AN^{1-\alpha}}{\alpha \phi(N)}}} |p|^{-\frac{1}{1-\alpha}}
		\Big).
\end{align*}
}Together with Equation \eqref{eq:error_mu_tilde_N_sum_omega_tilde_pk}, we finally obtain the estimate, as $N\to\infty$,
\begin{align*}
	\mu_{N,N',N''}^+(f) - \nu_{N,N',N''}^+(f) \numberthis\label{eq:finalestimate_change_var}
	= & \bigO_{\alpha, \Lambda} \Big( \frac{\|f\|_\infty N}{\psi(N)}
	\big(1 +\big(\frac{AN^{1-\alpha}}{\phi(N)}\big)^2 \big)
	\\ & \hspace{1cm} + \|f\|_\infty\big(1+\frac{AN^{1-\alpha}}{\phi(N)}\big)\big(\frac{AN^{1-\alpha}}{\phi(N)}\big)^2 \frac{N}{\psi(N)}
	\\ & \hspace{1cm} + \frac{ A^2 \|df\|_\infty N^2}{\psi(N) \phi(N)^{\frac{1}{1-\alpha}}} \sum_{\substack{p\in\vec{\Lambda}-\{0\}\\ |p| \leq \frac{AN^{1-\alpha}}{\alpha \phi(N)}}} |p|^{-\frac{1}{1-\alpha}}
	\Big).
\end{align*}

\subsubsection{Geometric description by symmetry}\label{sssec:geom_desc_symm}
Set $\nu_{N,N',N''} = \nu_{N,N',N''}^+ + (z \mapsto -z)_*\nu_{N,N',N''}^+$. Using the symmetry argument \eqref{eq:symmetry}, we will be able to compare $\R_{N,N',N''}^{\alpha,\Lambda,\lvl}$ to $\nu_{N,N',N''}$. This section aims at describing the measure $\nu_{N,N',N''}$.

\begin{lemma}\label{lem:symmetry_angular_sector}
For all $k\in\ZZ$ and all $p\in \vec{\Lambda}-\{0\}$, up to a complex subset of Lebesgue measure $0$, we have the disjoint union
$$
HC_{p,k} \cup (-HC_{-p,k}) = C_{p,k} \cap (\CC - D(0,\frac{\alpha |p| \phi(N)}{N^{1-\alpha}})).
$$
\end{lemma}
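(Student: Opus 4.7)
The plan is to show that $HC_{p,k}$ and $-HC_{-p,k}$, once expressed in polar coordinates on a common angular sector, correspond to disjoint complementary half-subsectors lying outside the same disk. I would first observe that $-C_{-p,k}=C_{p,k}$: since $\theta(-p)\equiv\theta(p)+\pi\pmod{2\pi}$, negation shifts arguments by $\pi$ and identifies the defining angular conditions of $C_{-p,k}$ and $C_{p,k}$. In particular $-HC_{-p,k}\subset C_{p,k}$. Next, the radial condition from the half-disk of radius $|\delta_{N,p}|N$ transforms, via $|h_{p,k}(z)|=|z|^{-1/(1-\alpha)}$ and the explicit formula $|\delta_{N,p}|=(\phi(N)\alpha|p|)^{-1/(1-\alpha)}$, into $|z|\ge\alpha|p|\phi(N)/N^{1-\alpha}$. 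Because $|-p|=|p|$, both $HC_{p,k}$ and $-HC_{-p,k}$ lie outside the same disk, matching the right-hand side of the claim.

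I would then describe the angular condition. Writing $\omega_k=\arg(z)$ in its canonical range $(\theta(p)-2\pi(k+1)(1-\alpha),\,\theta(p)-2\pi k(1-\alpha))$, the identity $\arg(h_{p,k}(z))=-\omega_k/(1-\alpha)$ turns the half-disk argument condition $\arg(h_{p,k}(z))\in(-\alpha\theta(p)/(1-\alpha)-\pi,\,-\alpha\theta(p)/(1-\alpha))\bmod 2\pi$ into the clean form
\[
\sigma(\omega_k)\;:=\;\frac{\alpha\theta(p)-\omega_k}{1-\alpha}\;\in\;(-\pi,0)\pmod{2\pi}.
\]
For $-HC_{-p,k}$, a point $z'$ satisfies $-z'\in HC_{-p,k}$. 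Parametrising $-z'$ by its canonical $\omega_k''\in\theta(-p)-J_k$ gives the condition $(\alpha\theta(-p)-\omega_k'')/(1-\alpha)\in(-\pi,0)\bmod 2\pi$. I would convert this to the parametrisation of $C_{p,k}$ by setting $\tilde\omega_k=\omega_k''+\pi+2\pi m$ with $m\in\ZZ$ chosen so that $\tilde\omega_k$ lies in the canonical range for $C_{p,k}$; writing $\theta(-p)=\theta(p)+\pi+2\pi n$ with $n\in\{-1,0\}$, a short computation shows that the residue in $\RR/2\pi\ZZ$ satisfies $(\alpha\theta(-p)-\omega_k'')/(1-\alpha)\equiv\sigma(\tilde\omega_k)-\pi\pmod{2\pi}$, so the condition for $-HC_{-p,k}$ becomes $\sigma(\tilde\omega_k)\in(0,\pi)\bmod 2\pi$.

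Finally, I would combine the two conditions: in $S^1=\RR/2\pi\ZZ$, the arcs $(-\pi,0)$ and $(0,\pi)$ are disjoint and together cover $S^1\setminus\{0,\pi\}$. The map $\omega_k\mapsto\sigma(\omega_k)$ is an affine bijection between the canonical $\omega_k$-range and a length-$2\pi$ interval, so pulling back partitions $C_{p,k}$ (minus two rays corresponding to $\sigma\in\{0,\pi\}$, and the further measure-zero ray $L_{p,k}$) into the two preimage subsectors. Intersecting with the radial condition $|z|\ge\alpha|p|\phi(N)/N^{1-\alpha}$ then yields the claimed disjoint union $HC_{p,k}\cup(-HC_{-p,k})=C_{p,k}\cap(\CC-D(0,\alpha|p|\phi(N)/N^{1-\alpha}))$ up to a set of Lebesgue measure zero.

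The main subtlety I expect is bookkeeping the modulo-$2\pi$ arithmetic when transferring the half-disk direction attached to $-p$ back to coordinates on $C_{p,k}$: since $\theta(-p)$ differs from $\theta(p)$ by either $\pi$ or $\pi-2\pi$, one must verify that the resulting shift in $\sigma$ is exactly $-\pi$ modulo $2\pi$ in both cases. The uniform parametrisation $\theta(-p)=\theta(p)+\pi+2\pi n$ with $n\in\{-1,0\}$, together with the fact that the extra $-2\pi n$ term vanishes modulo $2\pi$, handles this cleanly.
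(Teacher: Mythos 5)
Your argument is correct and follows essentially the same route as the paper's proof: both track the half-disk's angular condition through the change of variable $h_{p,k}^{-1}$ at each fixed modulus, and both conclude that $HC_{p,k}$ and $-HC_{-p,k}$ occupy complementary half-arcs of the sector $C_{p,k}$ outside the disk of radius $\alpha|p|\phi(N)/N^{1-\alpha}$. The only cosmetic difference is that where the paper splits into the cases $\theta(p)\le\pi$ and $\theta(p)>\pi$ and writes out the resulting sub-arcs explicitly, you package the same bookkeeping into the affine reparametrisation $\sigma$ and the identity $\theta(-p)=\theta(p)+\pi+2\pi n$ with $n\in\{-1,0\}$, which makes the shift by $-\pi$ (hence the complementarity of $\sigma\in\,]-\pi,0[\,$ versus $\sigma\in\,]0,\pi[\,$) more transparent.
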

\begin{figure}[ht]
	\centering
	\includegraphics[height=7.cm]{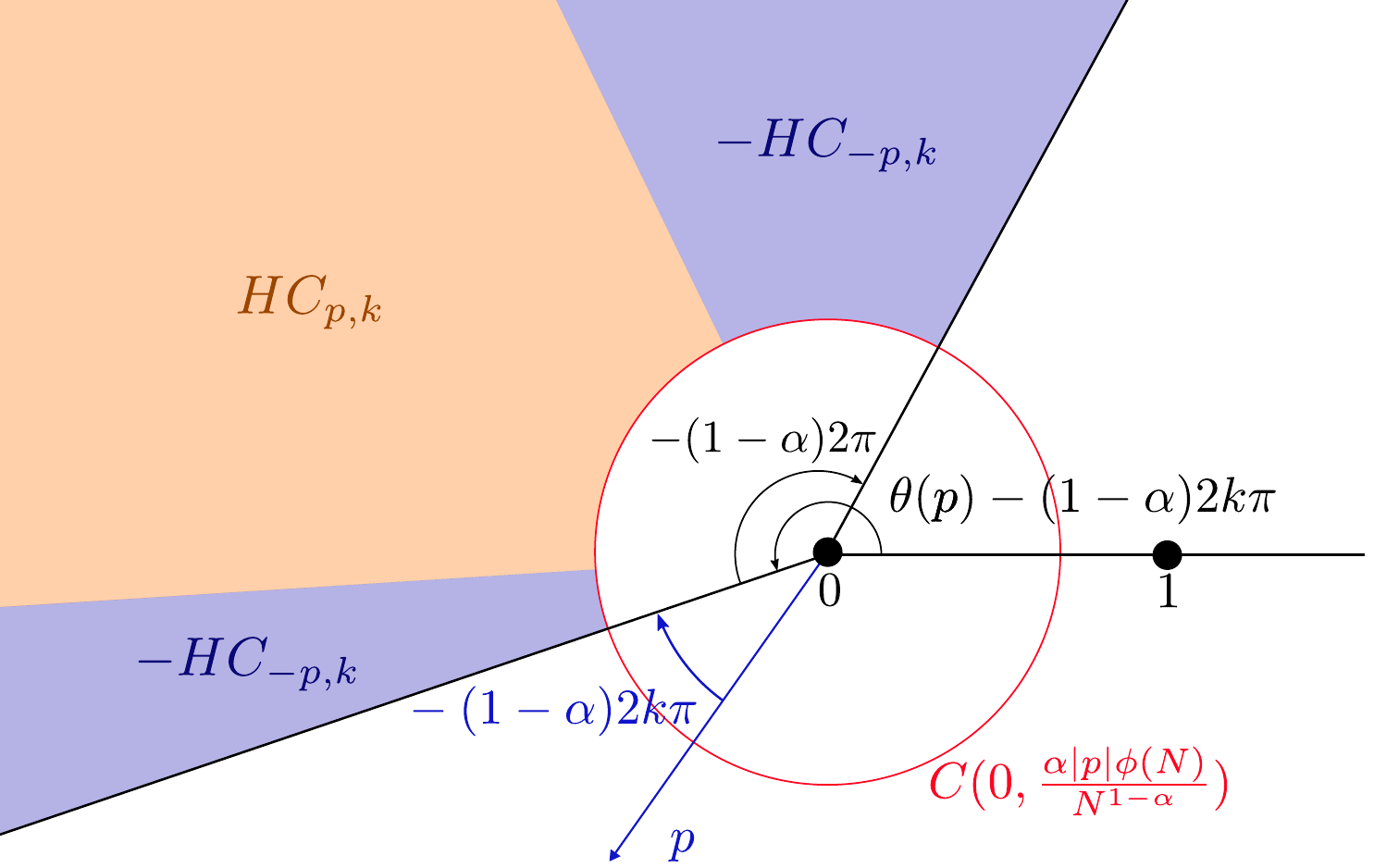}
	\caption{Illustration of Lemma \ref{lem:symmetry_angular_sector}.}
	\label{fig:geom_symm_after_change_var}
\end{figure}
\begin{proof}
	 To prove this, fix $k$ and $p$ as such. We begin by noticing that $HC_{p,k}$ and $-HC_{-p,k}$ are indeed subsets of $C_{p,k}$ (by definition for $HC_{p,k}$, and thanks to the inclusion $-HC_{-p,k} \subset - C_{-p,k} = C_{p,k}$). Furthermore, they are subsets of $\CC - D(0,\frac{\alpha |p| \phi(N)}{N^{1-\alpha}})$ since the changes of variable $h_{p,k}^{-1}$ and $h_{-p,k}^{-1}$ have the same modulus $z \mapsto |z|^{-(1-\alpha)}$. More precisely, the definition of $h_{p,k}$ grants the formula, for all $z\in\CC-L_{p,k}$,
	 $$
	 h_{p,k}^{-1}(z) = |z|^{-(1-\alpha)} e^{-i(1-\alpha)\omega_k} \; \mbox{ where } \; \arg(z) \equiv \omega_k \in -\frac{\theta(p)}{1-\alpha} + 2\pi \, ]k,k+1[
	 $$
	or, in other words, for all $r>0$ and $\omega \in \RR - (-\theta(p) +2\pi\ZZ)$,
	{\small
	\begin{equation}\label{eq:formula_hpk_polar}
		h_{p,k}^{-1}(re^{i(-\frac{\alpha \theta(p)}{1-\alpha} + \omega)}) = r^{-(1-\alpha)} \exp\Big(\theta(p) - (1-\alpha)\big(\theta(p)+\omega+2\pi\big(k-\Big\lfloor\frac{\theta(p)+\omega}{2\pi}\Big\rfloor\big)\big)\Big).
	\end{equation}
	}Since $h_{p,k}^{-1}$ (and similarly $h_{-p,k})$) acts separately on each variable in polar coordinates, it remains to describe $HC_{p,k}$ (resp.~$-HC_{-p,k}$) in terms of arguments, which reduces to the description of the set $h_{p,k}^{-1}(\SSS^1-\{e^{-\frac{i\theta(p)}{1-\alpha}}\})$ (resp.~$-h_{-p,k}^{-1}(\SSS^1-\{e^{-\frac{i\theta(-p)}{1-\alpha}}\})$). Using the formula \eqref{eq:formula_hpk_polar} and doing separately the cases $\theta(p) \leq \pi$ and $\theta(p) > \pi$, we find that
	{\small
	\begin{align*}
		& h_{p,k}^{-1}(\SSS^1-\{e^{-\frac{i\theta(p)}{1-\alpha}}\})
		\\ = \, & \left\{
		\begin{array}{ll}
			\{e^{i\omega} \, : \, \omega \in \theta(p) - 2\pi(1-\alpha) (\, ]k,k+\frac{\theta(p)}{2\pi}[ \, \cup \, ]k+\frac{1}{2}+\frac{\theta(p)}{2\pi}, k+1 [ \,) \} & \mbox{ if } \theta(p) \leq \pi,
			\\ \{e^{i\omega} \, : \, \omega \in \theta(p) - 2\pi(1-\alpha) \, ]k-\frac{1}{2}+\frac{\theta(p)}{2\pi}, k+\frac{\theta(p)}{2\pi} [\,\} & \mbox{ if } \theta(p) > \pi,
		\end{array} \right.
	\end{align*}
	}which is half of the circle arc $\SSS^1 \cap C_{p,k}$. Similarly, up to a finite number of points (namely the three points in $\exp(\alpha \theta(p) -2\pi k(1-\alpha)+\{-1,0,1\})$), the complex subset $-h_{-p,k}^{-1}(\SSS^1-\{e^{-\frac{\theta(-p)}{1-\alpha}}\})$ can be proven equal to half of a circle arc, namely the complement of $h_{p,k}^{-1}(\SSS^1-\{e^{-\frac{i\theta(p)}{1-\alpha}}\})$ in $\SSS^1\cap C_{p,k}$. This concludes the proof of the lemma.
\end{proof}
Thanks to the union described in Lemma \ref{lem:symmetry_angular_sector}, we derive the following formula
$$\nu_{N,N',N''} = \frac{1}{(N'+N'')\psi(N)} \sum_{k=-N'}^{N''-1} \sum_{p\in\vec{\Lambda}-\{0\}} \nu_{p,k}$$
where we set $\nu_{p,k} = \nu_{p,k}^+ + (z\mapsto -z)_* \nu_{-p,k}^+$, that is to say $\nu_{p,k}$ is a measure absolutely continuous with respect to $\Leb_\CC$ with density given by, for all $z\in\CC$,
\begin{equation}\label{eq:def_theta_N_case_0}
	g_{p,k}(z) = \frac{|z|^{-\frac{4-2\alpha}{1-\alpha}}\11_{C_{p,k}\cap(\CC - D(0,\frac{\alpha |p| \phi(N)}{N^{1-\alpha}}))}(z) (\phi(N)\alpha |p|)^{\frac{2}{1-\alpha}}}{(1-\alpha)^2 \covol_{\vec{\Lambda}}}.
\end{equation}
We use the notation $C_p = C_{p,0} \cap (\CC-D(0,\frac{\alpha |p| \phi(N)}{N^{1-\alpha}}))$. We notice that the sector $C_{p,k}\cap(\CC - D(0,\frac{\alpha |p| \phi(N)}{N^{1-\alpha}}))$ is obtained by a rotation of $C_p$ as $e^{-i2\pi k(1-\alpha)} C_p = e^{i2\pi k\alpha}C_p$. We can then describe $\nu_{N,N',N''}$ by the following formula
{\footnotesize
$$
\nu_{N,N',N''}(f) = \frac{(\alpha \phi(N))^{\frac{2}{1-\alpha}}}{(1-\alpha)^2 \covol_{\vec{\Lambda}} (N'+N'')\psi(N)} \sum_{k=-N'}^{N''-1} \sum_{p\in\vec{\Lambda}-\{0\}} |p|^{\frac{2}{1-\alpha}} \int_{e^{i2\pi k\alpha}C_p} f(z)|z|^{-\frac{4-2\alpha}{1-\alpha}} \, dz,
$$
}where the sum over $p\in \vec{\Lambda}-\{0\}$ is finite since $e^{i2\pi k\alpha}C_p \subset \CC - \supp{f}$ if $|p| >\frac{AN^{1-\alpha}}{\alpha\phi(N)}$. When $N',N'' \to \infty$, we will average over $k$ the above integrals on $e^{i2\pi k\alpha}C_p$, which will allow us to replace them by one integral over $\CC-D(0,\frac{\alpha |p| \phi(N)}{N^{1-\alpha}})$. For that purpose, we separate the cases $\alpha \in \QQ \cap \, ]0,1[ \,$ and $\alpha \in (\RR-\QQ) \cap \, ]0,1[\,$. Since the averaging over $k \in \{-N', \ldots, N''-1\}$ and the one over $p\in\vec{\Lambda}-\{0\}$ are geometrically uncorrelated, both averaging processes seem to be necessary in order to obtain a rotation-invariant limit. Imposing a small value of $N'+N''$ empirically leads to rotation discrepancy near the origin, as shown in Figure \ref{fig:correlpairs_nonrotinv_exotic_alpha2342} (where $N'=0$ and $N''=1$).

\noindent The measure we will obtain after this averaging process is given by the formula
\begin{equation}\label{eq:def_nu_N}
	\nu_N(f) = \frac{(\alpha \phi(N))^{\frac{2}{1-\alpha}}}{(1-\alpha) \covol_{\vec{\Lambda}} \psi(N)} \sum_{p\in\vec{\Lambda}-\{0\}} |p|^{\frac{2}{1-\alpha}} \int_{\CC-D(0,\frac{\alpha|p|\phi(N)}{N^{1-\alpha}})} f(z)|z|^{-\frac{4-2\alpha}{1-\alpha}} \, dz.
\end{equation}

\subsubsection{Averaging: the rational case}\label{sssec:average_rat}
In this section, we assume that $\alpha \in \QQ \cap \, ]0,1[ \,$ and we write $\alpha=\frac{a}{b}$ where $a$ and $b$ are coprime positive natural numbers. We recall that the angle of the restricted open sectors $C_p$ is $2\pi (1-\alpha) =  2\pi \frac{b-a}{b}$. Thus, outside of the union of $b$ rays from the origin (which is a set of Lebesgue measure $0$), we have the covering formula, for all $k_0 \in\ZZ$ and all $p\in\vec{\Lambda} - \{0\}$, 
\begin{equation}\label{eq:circle_cover_rot_rat}
\sum_{k=k_0}^{k_0+b-1} \11_{e^{i2\pi k\alpha} C_p} = (b-a) \11_{\CC-D(0,\frac{\alpha|p|\phi(N)}{N^{1-\alpha}})}.
\end{equation}
Hence, we can rewrite $\nu_{N,N',N''}(f)$ by regrouping groups of $b$ consecutive integrals, which gives
{\footnotesize
\begin{align*}
	& \nu_{N,N',N''}(f) = \frac{(\alpha \phi(N))^{\frac{2}{1-\alpha}} (b-a) \big\lfloor \frac{N'+N''}{b} \big\rfloor}{(1-\alpha)^2 \covol_{\vec{\Lambda}} (N'+N'')\psi(N)} \sum_{p\in\vec{\Lambda}-\{0\}} |p|^{\frac{2}{1-\alpha}} \int_{\CC-D(0,\frac{\alpha|p|\phi(N)}{N^{1-\alpha}})} f(z)|z|^{-\frac{4-2\alpha}{1-\alpha}} \, dz
	\\ & + \frac{(\alpha \phi(N))^{\frac{2}{1-\alpha}}}{(1-\alpha)^2 \covol_{\vec{\Lambda}} (N'+N'')\psi(N)} \sum_{k=-N'+\lfloor \frac{N'+N''}{b} \rfloor}^{N''-1}\; \sum_{p\in\vec{\Lambda}-\{0\}} |p|^{\frac{2}{1-\alpha}} \int_{e^{i2\pi k\alpha} C_p} f(z)|z|^{-\frac{4-2\alpha}{1-\alpha}} \, dz.
\end{align*}
}We recall the formula $\psi(N) = (\frac{N^{2-\alpha}}{\phi(N)})^2$. Using polar coordinate, we can bound from above the latter integrals as follows, for all $k \in \ZZ$ and $p \in \vec{\Lambda}-\{0\}$,
{\small
$$
\int_{e^{i2\pi k\alpha} C_p} f(z)|z|^{-\frac{4-2\alpha}{1-\alpha}} \, dz \leq 2\pi \|f\|_\infty \int_{r > \frac{\alpha |p| \phi(N)}{N^{1-\alpha}}} r^{-\frac{3-\alpha}{1-\alpha}}\,dr = (1-\alpha) \pi \|f\|_\infty \big( \frac{\alpha |p| \phi(N)}{N^{1-\alpha}} \big)^{-\frac{2}{1-\alpha}}.
$$
}Using the inequality $\big| \frac{\lfloor \frac{N'+N''}{b} \rfloor}{N'+N''}- \frac{1}{b}\big| \leq \frac{1}{N'+N''}$ and Lemma \ref{lem:sum_powers_grid} (summing over $\vec{\Lambda}$ with $\beta = 0$ and $x=\frac{AN^{1-\alpha}}{\alpha \phi(N)}$), we get the estimate, as $N\to\infty$,
\begin{equation}\label{eq:estimate_nu_average_rat}
	\nu_{N,N',N''}(f) = \nu_N(f) + \bigO_{\alpha, \Lambda} \Big( \frac{A^2 \|f\|_\infty \11_{\ZZ-b\ZZ}(N'+N'')}{N'+N''} \Big).
\end{equation}

\subsubsection{Averaging: the irrational case}\label{sssec:average_irrat}
In this section, we assume that $\alpha \in (\RR-\QQ) \cap \, ]0,1[ \,$. As we take successive rotations by an angle $2\pi\alpha $ (or equivalently, an angle $-2\pi (1-\alpha)$) of the (restricted) angular sector $C_p$, there is no possibility of having a periodic covering formula such as Equation \eqref{eq:circle_cover_rot_rat}. However, since the angle of $C_p$ is also $2\pi (1-\alpha)$, we can still geometrically understand the error in such a covering.
Let $C_{p,N',N''}$ denote the complex subset
{\footnotesize
$$
\big\{z \in \CC \, : \, |z| \geq \frac{\alpha|p|\phi(N)}{N^{1-\alpha}} \mbox{ and } \arg(z) \cap \big(\theta(p) + 2\pi(1-\alpha) \, \big] \frac{\lfloor (1-\alpha)(N'+N'') \rfloor}{1-\alpha} - N'', \, N' \big[\,\big) \neq \emptyset\big\}.
$$
}In other words, $C_{p,N',N''}$ is the restriction to $\CC-D(0,\frac{\alpha|p|\phi(N)}{N^{1-\alpha}})$ of the angular sector between arguments $\theta(p)-(1-\alpha)N''+2\pi\ZZ$ and $\theta(p)+(1-\alpha)N'+2\pi\ZZ$ (with direct trigonometric orientation). Then, outside of the union of $2(N'+N'')$ rays from the origin (which is a set of Lebesgue measure $0$), we have the formula, for all $p\in\vec{\Lambda}-\{0\}$,
$$
\sum_{k=-N'}^{N''-1} \11_{e^{i2\pi k\alpha}C_p} = \lfloor (1-\alpha) (N'+N'') \rfloor \11_{\CC-D(0,\frac{\alpha |p| \phi(N)}{N^{1-\alpha}})} + \11_{C_{p,N',N''}}.
$$
With computations analogous to the ones in Section \ref{sssec:average_rat}, we find a similar error term, namely as $N\to\infty$,
\begin{equation}\label{eq:estimate_nu_average_irrat_2}
	\nu_{N,N',N''}(f) = \nu_N(f) + \bigO_{\alpha, \Lambda} \Big( \frac{A^2 \|f\|_\infty}{N'+N''} \Big).
\end{equation}

\subsection[regime 0]{Regime $\frac{\phi(N)}{N^{1-\alpha}} \underset{N\to\infty}{\longrightarrow} 0$}\label{ssec:regime0}
Using the formula $\psi(N) = (\frac{N^{2-\alpha}}{\phi(N)})^2$ and Lemma \ref{lem:sum_powers_grid} (summing over $\vec{\Lambda}$ with $\beta=-\frac{1}{1-\alpha}$ and $x=\frac{AN^{1-\alpha}}{\alpha \phi(N)}$), the third line in the estimate \eqref{eq:finalestimate_change_var} can be bounded, as $N\to\infty$,
$$
\frac{N^2}{\psi(N) \phi(N)^{\frac{1}{1-\alpha}}} \sum_{\substack{p\in\vec{\Lambda}-\{0\}\\ |p| \leq \frac{AN^{1-\alpha}}{\alpha \phi(N)}}} |p|^{-\frac{1}{1-\alpha}} =
\left\{
\begin{array}{cl}
	A^\frac{1-2\alpha}{1-\alpha} \bigO_{\alpha, \Lambda}(\frac{1}{N}) & \mbox{ if } \alpha < \frac{1}{2},
	\\[3mm] \bigO_{\alpha, \Lambda}(\frac{\log(\frac{\sqrt{N}}{\phi(N)})}{N}) & \mbox{ if } \alpha = \frac{1}{2},
	\\[3mm] \bigO_{\alpha, \Lambda}((N^{2(1-\alpha)} \phi(N)^\frac{2\alpha-1}{1-\alpha})^{-1}) & \mbox{ if } \alpha > \frac{1}{2},
\end{array}
\right.
$$
Since $\frac{1}{N}$, $\frac{\log(\frac{\sqrt{N}}{\phi(N)})}{N}$ and even $(N^{2(1-\alpha)} \phi(N)^\frac{2\alpha-1}{1-\alpha})^{-1}$ are negligible with respect to $\frac{1}{N^\alpha \phi(N)}$, the estimate \eqref{eq:finalestimate_change_var} can be rewritten by removing the first term of its right-hand side and by combining the second and third terms, so that the estimate holds for $N$ large enough independently on $\|df\|_\infty$ (as required in our definition of $\bigO_{\alpha, \Lambda}$). As $N\to\infty$, we obtain
$$
\mu_{N,N',N''}^+(f) = \nu_{N,N',N''}^+(f) + \bigO_{\alpha, \Lambda} \Big( \frac{A^3 (\|f\|_\infty+\|df\|_\infty)}{N^\alpha \phi(N)} \Big).
$$
Then, using the symmetry described in Equation \eqref{eq:symmetry} together with Lemma \ref{lem:negligible_diag_points} (in which the stated estimate is also negligible when compared to $\frac{1}{N^\alpha \phi(N)}$, see Remark \ref{rk:negligible_diag_points_case_lambda_0}) and Lemma \ref{lem:complex_linear_approx}, (see Remark \ref{rk:complex_linear_approx_case_lambda_0}) we get, as $N\to\infty$, 
\begin{equation}\label{eq:estimate_R_N_nu_N_case_0}
	\R_{N,N',N''}^{\alpha,\Lambda,\lvl}(f) = \nu_{N,N',N''}(f) + \bigO_{\alpha, \Lambda} \Big(\frac{A^4 (\|df\|_\infty + \|f\|_\infty)}{N^\alpha \phi(N)} \Big).
\end{equation}
Thanks to the estimates \eqref{eq:estimate_nu_average_rat} and \eqref{eq:estimate_nu_average_irrat_2}, we can focus on the behaviour of the sequence $(\nu_N(f))_{N\in\NN}$ defined in Equation \eqref{eq:def_nu_N}, where $\nu_N$ is the measure of density
$$
g_N : z \mapsto \frac{(\alpha \phi(N))^{\frac{2}{1-\alpha}} |z|^{-\frac{4-2\alpha}{1-\alpha}}}{(1-\alpha)\covol_{\vec{\Lambda}} \psi(N)} \sum_{\substack{p\in\vec{\Lambda}-\{0\} \\ |p| \leq \frac{|z| N^{1-\alpha}}{\alpha\phi(N)}}} |p|^\frac{2}{1-\alpha}
$$
with respect to the Lebesgue measure of $\CC$ (with $g_N(0)=0$ by continuity). In this regime, using Lemma \ref{lem:sum_powers_grid} (summing over $\vec{\Lambda}-\{0\}$ with $\beta=\frac{2}{1-\alpha}$ and $x=\frac{|z| N^{1-\alpha}}{\alpha\phi(N)}$), we have the pointwise convergence
$$
\forall z \in \CC^*, \, g_N(z) \underset{N\to\infty}{\longrightarrow} \frac{\pi}{\alpha^2 (2-\alpha) \covol_{\vec{\Lambda}}^2} = \rho_{\alpha,\vec{\Lambda},\lambda}(z).
$$
More precisely, Lemma \ref{lem:sum_powers_grid} even grants us the error term, as $N\to \infty$, uniformly for every complex number $z \in \CC - D(0,\frac{\alpha \phi(N)}{N^{1-\alpha}})$,
$$
g_N(z) = \rho_{\alpha,\vec{\Lambda},\lambda}(z) + \bigO_{\alpha, \Lambda}\Big( \frac{\phi(N)}{|z| N^{1-\alpha}} \Big).
$$
For all $N\in\NN$, the function $g_N$ vanishes on the open disk $D(0,\frac{\sys_{\vec{\Lambda}} \alpha \phi(N)}{N^{1-\alpha}})$ hence is bounded from above on $D(0,\frac{\alpha \phi(N)}{N^{1-\alpha}})$ by 
$$\frac{(\alpha \phi(N))^{\frac{2}{1-\alpha}} \sys_{\vec{\Lambda}}^{-\frac{4-2\alpha}{1-\alpha}}}{(1-\alpha)\covol_{\vec{\Lambda}} \psi(N)} \sum_{\substack{p\in\vec{\Lambda}-\{0\} \\ |p| \leq 1}} |p|^\frac{2}{1-\alpha} = \frac{\phi(N)^{\frac{2}{1-\alpha}}}{\psi(N)} C_{\alpha,\Lambda}.$$
Integrating these error terms over $\CC$ and since $\int_{D(0,A)} \frac{1}{|z|}\, dz = 2\pi A$, we obtain the estimate, as $N\to\infty$,
{\small
\begin{align*}
	& |\nu_N(f) - \rho_{\alpha,\vec{\Lambda},\lambda} \Leb_\CC(f)|
	\\ \leq \, & \|f\|_\infty \int_{D(0,\frac{\alpha \phi(N)}{N^{1-\alpha}})} \frac{\pi}{\alpha^2 (2-\alpha) \covol_{\vec{\Lambda}}^2} \, dz + \|f\|_\infty \int_{D(0,\frac{\alpha \phi(N)}{N^{1-\alpha}})} \frac{\phi(N)^{\frac{2}{1-\alpha}}}{\psi(N)} C_{\alpha,\Lambda}  \, dz
	\\ & + \|f\|_\infty \int_{D(0,A) \cap (\CC - D(0,\frac{\alpha \phi(N)}{N^{1-\alpha}}))} \bigO_{\alpha, \Lambda}\Big( \frac{\phi(N)}{|z| N^{1-\alpha}} \Big) \, dz
	\\ = \, & \|f\|_\infty \bigO_{\alpha, \Lambda}\Big(\big(\frac{\phi(N)}{N^{1-\alpha}}\big)^2 \Big)
	+ \|f\|_\infty \bigO_\alpha\Big(  C_{\alpha,\Lambda} \big(\frac{\phi(N)}{N^{1-\alpha}}\big)^{\frac{4-2\alpha}{1-\alpha}} \Big) + \|f\|_\infty \bigO_{\alpha, \Lambda} \Big( \frac{A\phi(N)}{N^{1-\alpha}} \Big) 
	\\ = \, & \bigO_{\alpha, \Lambda} \Big( \frac{A \|f\|_\infty \phi(N)}{N^{1-\alpha}} \Big). \numberthis\label{eq:error_nu_N_rho_case_0}
\end{align*}
}Combining Equations \eqref{eq:error_nu_N_rho_case_0}, \eqref{eq:estimate_R_N_nu_N_case_0}, \eqref{eq:estimate_nu_average_irrat_2} and \eqref{eq:estimate_nu_average_rat}, we have proven Theorem \ref{th:effective_cv_complex_correlations} in the case $\lambda=0$.

\subsection[regime lambda]{Regime $\frac{\phi(N)}{N^{1-\alpha}} \underset{N\to\infty}{\longrightarrow} \lambda \in \, ]0,+\infty[\,$}\label{ssec:regimelambda}
By using the inequality $|p| \geq \sys_{\vec{\Lambda}}$, the formula $\psi(N)=\big( \frac{N^{2-\alpha}}{\phi(N)} \big)^2 \sim \frac{N^2}{\lambda^2}$ and Gauss counting argument \eqref{eq:gauss_count_beta_0_alt}, in this regime, the estimate \eqref{eq:finalestimate_change_var} grants us, as $N\to\infty$,
{\footnotesize
\begin{align*}
	& \mu_{N,N',N''}^+(f) - \nu_{N,N',N''}^+(f)
	\\ = & \bigO_{\alpha,\Lambda} \Big( \frac{A^2\lambda ^2\|f\|_\infty}{N}
	(1 + \lambda^{-2}) + \frac{A^3 \|f\|_\infty(\lambda^{-1}+1)}{N}
	+ \frac{ A^4 \|df\|_\infty \lambda^2}{\lambda^\frac{1}{1-\alpha}N} (\lambda^{-1}+1)^2 \sys_{\vec{\Lambda}}^{-\frac{1}{1-\alpha}}
	\Big)
	\\ = & \bigO_{\alpha,\Lambda} \Big( \frac{A^4 (\|f\|_\infty + \|df\|_\infty)(\lambda+\lambda^{-1})^{\max\{2,\frac{1}{1-\alpha}\}}}{N} \Big). \numberthis\label{eq:finalestimate_change_var_exotic_case}
\end{align*}
}Thanks to the estimates \eqref{eq:finalestimate_change_var_exotic_case}, \eqref{eq:estimate_nu_average_rat}, \eqref{eq:estimate_nu_average_irrat_2} and to Lemmas \ref{lem:negligible_diag_points} and \ref{lem:complex_linear_approx}, in this regime too we can focus on the behaviour of the sequence $(\nu_N)_{N\in\NN}$ defined in Equation \eqref{eq:def_nu_N}. Its density $g_N$ with respect to the Lebesgue measure of $\CC$ has the following the pointwise almost everywhere convergence outside of a countable union of circles: for all $z\in\CC- \bigcup_{p\in \vec{\Lambda}} C(0,\alpha\lambda |p|)$,
{\footnotesize
$$
g_N(z) = \frac{(\alpha \phi(N))^{\frac{2}{1-\alpha}} |z|^{-\frac{4-2\alpha}{1-\alpha}}}{(1-\alpha)\covol_{\vec{\Lambda}} \psi(N)} \sum_{\substack{p\in\vec{\Lambda}-\{0\} \\ |p| \leq \frac{|z| N^{1-\alpha}}{\alpha\phi(N)}}} |p|^\frac{2}{1-\alpha} \underset{N\to\infty}{\longrightarrow} \frac{\alpha^{\frac{2}{1-\alpha}}}{(1-\alpha) \covol_{\vec{\Lambda}}} \Big(\frac{|z|}{\lambda}\Big)^{-\frac{4-2\alpha}{1-\alpha}} \sum_{\substack{p\in\vec{\Lambda}-\{0\} \\ |p| \leq \frac{|z|}{\alpha \lambda}}} |p|^\frac{2}{1-\alpha},
$$
}which is the formula of the function $\rho_{\alpha,\vec{\Lambda},\lambda}$ defined before Theorem \ref{th:main}. In this section, we aim at making this convergence effective and at concluding the proof of Theorem \ref{th:effective_cv_complex_correlations}. From now on, we assume that $N$ is large enough so that $\frac{\lambda}{2} \leq \frac{\phi(N)}{N^{1-\alpha}} \leq 2\lambda$. First, one can notice that both functions $g_N$ and $\rho_{\alpha,\vec{\Lambda},\lambda}$ vanish on the open disk $D(0,\frac{\alpha \lambda \sys_{\vec{\Lambda}}}{2})$. For all $z \in D(0,A)$, we have the inequality
{\small
\begin{align*}
& |g_N(z)-\rho_{\alpha,\vec{\Lambda},\lambda}(z)| \leq \frac{\alpha^{\frac{2}{1-\alpha}} (\frac{\alpha \lambda \sys_{\vec{\Lambda}}}{2} )^{-\frac{4-2\alpha}{1-\alpha}}}{(1-\alpha) \covol_{\vec{\Lambda}}} \Big| \Big(\frac{\phi(N)}{N^{1-\alpha}}\Big)^\frac{4-2\alpha}{1-\alpha} - \lambda^\frac{4-2\alpha}{1-\alpha} \Big| \sum_{\substack{p\in\vec{\Lambda}-\{0\} \\ |p| \leq \frac{2A}{\alpha\lambda}}} |p|^\frac{2}{1-\alpha}
\\ + \; & \frac{\alpha^{\frac{2}{1-\alpha}} (\frac{\alpha \sys_{\vec{\Lambda}}}{2} )^{-\frac{4-2\alpha}{1-\alpha}}}{(1-\alpha) \covol_{\vec{\Lambda}}} \sum_{\substack{p\in\vec{\Lambda}-\{0\} \\ |p| \leq \frac{2A}{\alpha\lambda}}} |p|^\frac{2}{1-\alpha} \; \big|\11_{[\alpha \frac{\phi(N)}{N^{1-\alpha}} |p|, +\infty[\,}(|z|) - \11_{[\alpha \lambda |p|, +\infty[\,}(|z|) \big|.
\end{align*}
}Integrating on each annulus $\{z \in \CC \, : \, |z| \in [\alpha \lambda |p|, \alpha \frac{\phi(N)}{N^{1-\alpha}} |p|]\}$ and using Lemma \ref{lem:sum_powers_grid} (summing over $p'=\lambda p \in \lambda \vec{\Lambda}-\{0\}$ with $x=\frac{2A}{\alpha} \geq 1$ and $\beta=\frac{2}{1-\alpha}$ then $\beta=\frac{4-2\alpha}{1-\alpha}$), thanks to the inclusion $\supp{f} \subset D(0,A)$ and the inequality $\frac{\phi(N)}{N^{1-\alpha}} \leq 2\lambda$, we obtain the estimate, as $N\to\infty$,
{\footnotesize
\begin{align*}
	& \Big| \int_{\CC} (g_N-\rho_{\alpha,\vec{\Lambda},\lambda}) f \, d\Leb_\CC \Big| = \Big| \int_{D(0,A) - D(0,\frac{\alpha \lambda \sys_{\vec{\Lambda}}}{2})} (g_N-\rho_{\alpha,\vec{\Lambda},\lambda}) f \, d\Leb_\CC \Big|
	\\ = \, & A^2 \|f\|_\infty \bigO_\alpha \Big( \frac{(\lambda \sys_{\vec{\Lambda}})^{-\frac{4-2\alpha}{1-\alpha}}}{\covol_{\vec{\Lambda}}} \, \Big| \Big(\frac{\phi(N)}{N^{1-\alpha}} \Big)^\frac{4-2\alpha}{1-\alpha} - \lambda^\frac{4-2\alpha}{1-\alpha} \Big| \sum_{\substack{p\in\vec{\Lambda}-\{0\} \\ |p| \leq \frac{2A}{\alpha\lambda}}} |p|^\frac{2}{1-\alpha} \Big)
	\\ & + A^2 \|f\|_\infty
	\bigO_\alpha \Big( \frac{\sys_{\vec{\Lambda}}^{-\frac{4-2\alpha}{1-\alpha}} }{\covol_{\vec{\Lambda}}} \sum_{\substack{p\in\vec{\Lambda}-\{0\} \\ |p| \leq \frac{2A}{\alpha \lambda}}} |p|^\frac{2}{1-\alpha} 2\pi \max\Big\{ \alpha \frac{\phi(N)}{N^{1-\alpha}} |p|, \alpha\lambda |p| \Big\} \, \Big| \alpha \frac{\phi(N)}{N^{1-\alpha}} |p| - \alpha\lambda |p|\Big| \Big)
\end{align*}
\begin{align*}	
	\hspace{-1cm } = \, & \|f\|_\infty \bigO_{\alpha,\Lambda} \Big( \frac{A^\frac{6-4\alpha}{1-\alpha} (1+\lambda^2)}{\lambda^\frac{4-2\alpha}{1-\alpha}} \big|\big(\frac{\phi(N)}{\lambda N^{1-\alpha}}\big)^\frac{4-2\alpha}{1-\alpha}-1 \big| + \frac{A^\frac{8-6\alpha}{1-\alpha} (1+\lambda^2)}{\lambda^\frac{8-6\alpha}{1-\alpha}} \big| \frac{\phi(N)}{\lambda N^{1-\alpha}} - 1 \big|\Big)
	\\ \hspace{-1cm } = \, & \bigO_{\alpha,\Lambda} \Big(\|f\|_\infty A^\frac{8-6\alpha}{1-\alpha} (1+\lambda ^2) \big( \frac{1}{\lambda^{\frac{4-2\alpha}{1-\alpha}}} + \frac{1}{\lambda^{\frac{8-6\alpha}{1-\alpha}}} \big) \big| \frac{\phi(N)}{\lambda N^{1-\alpha}} - 1 \big| \Big).
\end{align*}
}Recalling that $\nu_N=g_N \Leb_\CC$, combining the latter estimate with the ones from Equations \eqref{eq:estimate_nu_average_rat}, \eqref{eq:estimate_nu_average_irrat_2}, \eqref{eq:finalestimate_change_var_exotic_case}, the symmetry described in Equation \eqref{eq:symmetry}, and Lemmas \ref{lem:negligible_diag_points} and \ref{lem:complex_linear_approx}, we have finally proven Theorem \ref{th:effective_cv_complex_correlations} (in which we simplified the error term by using standard inequalities such as $1+\lambda^\beta \leq 2(\lambda + \frac{1}{\lambda})^{|\beta|}$ for every real number $\beta$).
\qed

\section{Removing the branch cut}\label{sec:remove_branch_cut}
In the beginning of Section \ref{sec:lemmas}, we defined an empirical pair correlation measure $\R_{N,N',N''}^{\alpha,\Lambda}$. In its definition \eqref{eq:def_empirical_pair_cor_riemsurf}, for all grid points $n,m \in \Lambda$, we have the condition $|\Im(r)-\Im(s)|< 2\pi$ where $r,s$ are logarithms of $n,m$ in the associated Riemann surface $\widetilde{\CC^*}=\CC$. In terms of the levels introduced in Section \ref{ssec:levels}, this translates to consider all terms of the form $n^{[\alpha,k]} - m^{[\alpha,k]}$ (already taken into account in $\R_{N,N',N''}^{\alpha,\Lambda,\lvl}$), as well as the terms $n^{[\alpha,k+1]}-m^{[\alpha,k]}$ (resp.~$n^{[\alpha,k]}-m^{[\alpha,k+1]}$) for which the argument condition $\theta(n)<\theta(m)$ (resp.~$\theta(n)>\theta(m)$) holds. In other words, comparing the measure $\R_{N,N',N''}^{\alpha,\Lambda}$ with its level separated avatar $\R_{N,N',N''}^{\alpha,\Lambda,\lvl}$ defined in Equation \eqref{eq:def_correl_pair_lvl} and studied in Section \ref{sec:proof_thm}, we obtain
{\small
\begin{align*}
	\R_{N,N',N''}^{\alpha,\Lambda} - \R_{N,N',N''}^{\alpha,\Lambda,\lvl} = \frac{1}{(N'+N'')\psi(N)} \sum_{k=-N'}^{N''-2} & \sum_{\substack{n, m \in \Lambda, \, n\neq m \\ 0 < |n|,|m|\leq N}} \Delta_{\phi(N)(n^{[\alpha,k+1]}-m^{[\alpha,k]})}\11_{\theta(n)<\theta(m)}\numberthis\label{eq:diff_pair_correl_alpha_irrat}
	\\ & \hspace{1.4cm} + \Delta_{\phi(N)(n^{[\alpha,k]}-m^{[\alpha,k+1]})}\11_{\theta(n)>\theta(m)}.
\end{align*}
}

\begin{lemma}\label{lem:counting_remove_cut}
	Let $A>1$. For every integer $k\in\ZZ$, let
	{\footnotesize
	$$
	I_{N,A,k} = \big\{ (n,m) \in \Lambda^2 \; : \; n\neq m, 0<|n|,|m| \leq N, \, |n^{[\alpha,k+1]} - m^{[\alpha,k]}| \leq \frac{A}{\phi(N)} \mbox{ and } \theta(n)<\theta(m) \; \big\}.
	$$
	}Then we have, as $N\to\infty$, 
	$$ \card(I_{N,A,k}) = \bigO_{\alpha,\Lambda} \Big( N \big(\frac{ AN^{1-\alpha}}{\phi(N)}+1\big)^3 \, \11_{\lambda \neq +\infty} \Big).$$
\end{lemma}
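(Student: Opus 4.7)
The plan is to exploit rotational invariance, extract modulus and angular constraints from the single hypothesis, then count pairs in the spirit of Lemma~\ref{lem:negligible_diag_points}. First I would observe that $n^{[\alpha,k+1]}-m^{[\alpha,k]}=e^{i2\pi k\alpha}(n^{[\alpha,1]}-m^{[\alpha,0]})$, so $\card(I_{N,A,k})$ does not depend on $k$ and I may fix $k=0$. Writing $\delta=\theta(n)$ and $\gamma=2\pi-\theta(m)$, the condition $\theta(n)<\theta(m)$ gives $\delta,\gamma>0$ with $\delta+\gamma<2\pi$, and the arguments of $n^{[\alpha,1]}$ and $m^{[\alpha,0]}$ differ by exactly $\alpha(\delta+\gamma)$. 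Expanding the squared modulus,
\[
|n^{[\alpha,1]}-m^{[\alpha,0]}|^2 = (|n|^\alpha-|m|^\alpha)^2 + 4(|n||m|)^\alpha \sin^2\!\big(\alpha(\delta+\gamma)/2\big),
\]
the hypothesis splits into a modulus bound $||n|^\alpha-|m|^\alpha|\leq A/\phi(N)$ (converted by the mean value theorem into $||n|-|m||\leq AN^{1-\alpha}/(\alpha\phi(N))$) and an angular bound $\sin(\alpha(\delta+\gamma)/2)\leq A/(2\phi(N)(|n||m|)^{\alpha/2})$.

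The subtle step is the angular bound: since $\alpha(\delta+\gamma)/2\in(0,\pi\alpha)\subset(0,\pi)$ and $\sin$ vanishes at both endpoints of $(0,\pi)$, small values of $\sin$ could a priori correspond either to the regime near $0$ or, if $\alpha>\tfrac12$, to a regime near $\pi\alpha$. The hard part will be to rule the second out. I would do this by noting that $\phi(N)\to\infty$ and $(|n||m|)^{\alpha/2}\geq\sys_{\vec{\Lambda}}^\alpha$ make the right-hand side uniformly tend to $0$, so eventually it drops below the positive constant $\sin(\pi(1-\alpha))$, forcing $\alpha(\delta+\gamma)/2\leq\pi(1-\alpha)\leq\pi/2$. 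Applying $\sin(x)\geq 2x/\pi$ on $[0,\pi/2]$ then gives $\delta+\gamma\leq \pi A/(2\alpha\phi(N)(|n||m|)^{\alpha/2})$. Plugging both linearised bounds into
\[
|n-m|^2 = (|n|-|m|)^2 + 4|n||m|\sin^2\!\big((\delta+\gamma)/2\big)
\]
yields $|n-m|\leq C_\alpha AN^{1-\alpha}/\phi(N)$. When $\lambda=+\infty$ this tends to $0$, eventually forcing $|n-m|<\sys_{\vec{\Lambda}}$, hence $n=m$ and $I_{N,A,k}=\emptyset$; this accounts for the indicator $\11_{\lambda\neq+\infty}$.

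For the remaining case $\lambda\in[0,+\infty[\,$, I would count by first fixing the increment $p=n-m\in\vec{\Lambda}-\{0\}$ and then counting the admissible $m$. Gauss counting via Equation~\eqref{eq:gauss_count_beta_0_alt} bounds the number of such $p$ by $\bigO_{\alpha,\Lambda}((AN^{1-\alpha}/\phi(N)+1)^2)$. For fixed $p$, I would split according to whether $|m|\geq 2|p|$ (so that $|n|\geq|m|/2$, giving $(|n||m|)^{\alpha/2}\geq 2^{-\alpha/2}|m|^\alpha$, and the angular bound becomes $|m|\sin\gamma\leq|m|\gamma=\bigO_\alpha(AN^{1-\alpha}/\phi(N))$, placing $m$ in a two-sided strip around $\RR_+$ of constant width) or $|m|<2|p|$ (trapping $m$ in a disk of radius $\bigO(AN^{1-\alpha}/\phi(N))$). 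Lemma~\ref{lem:grid_pts_near_line} applied to the constant-width strip handles the first case with $\bigO_{\alpha,\Lambda}(N(AN^{1-\alpha}/\phi(N)+1))$ admissible $m$, while the second case contributes only $\bigO_{\alpha,\Lambda}((AN^{1-\alpha}/\phi(N)+1)^2)$ points, which is absorbed. Multiplying the counts for $p$ and $m$ yields the stated bound $\bigO_{\alpha,\Lambda}(N(AN^{1-\alpha}/\phi(N)+1)^3)$.
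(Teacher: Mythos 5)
Your proof is correct and reaches the same bound, but it takes a genuinely different route from the paper's for the key ``locating'' step. The paper first makes a geometric case split (whether the segment between $n^{[\alpha,k+1]}$ and $m^{[\alpha,k]}$ crosses the ray $L_{\alpha,k}$), handles the non-crossing case via Al-Kashi's law of cosines to force both moduli small, and then locates $m$ near $\RR_+$ by applying the mean value inequality to the inverse of an auxiliary branch of the $\alpha$-th power map $P_\alpha$, followed by a second mean value argument with the inverse power map $Q_\alpha$ to bound $|n-m|$. You instead expand $|n^{[\alpha,1]}-m^{[\alpha,0]}|^2$ directly via the law of cosines, read off separate modulus and angular constraints, rule out the spurious large-angle root of $\sin$ (which is essentially a clean rephrasing of the paper's Al-Kashi argument — both amount to the same inequality with $\cos(2\pi(1-\alpha))$, i.e.\ $\sin(\pi(1-\alpha))$ — but here the bad case is shown to be void for $N$ large rather than counted separately), and then plug these two linearised bounds into the analogous expansion of $|n-m|^2$. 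This is more elementary and self-contained: no inverse power maps are needed, and the angular constraint $|m|\sin\gamma=\bigO_\alpha(AN^{1-\alpha}/\phi(N))$ immediately yields the strip condition for Lemma \ref{lem:grid_pts_near_line}. You then fix the increment $p=n-m$ and count $m$, whereas the paper fixes $m$ and counts $n$; and you feed a constant-width strip into Lemma \ref{lem:grid_pts_near_line}, whereas the paper uses a width $\propto x^{1-\alpha}$ — but both yield $\bigO_{\alpha,\Lambda}(N(AN^{1-\alpha}/\phi(N)+1))$ for the strip count, so the asymptotic conclusion is identical. Two cosmetic slips worth noting: the relevant interval is $(0,\pi\alpha)$ rather than $(0,\pi)$ (so $\sin$ vanishes only at $0$, not at both endpoints — the concern is that $\sin(\pi\alpha)$ can be small when $\alpha$ is near $1$); the chain $\pi(1-\alpha)\leq\pi/2$ holds only for $\alpha\geq\frac12$ (for $\alpha<\frac12$ one already has $\alpha(\delta+\gamma)/2<\pi\alpha<\pi/2$ directly); and $\delta=\theta(n)$ can equal $0$ (only $\gamma>0$ is guaranteed), though $\delta+\gamma>0$ still holds as needed. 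None of these affect the argument.
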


\begin{proof}
	Let $(n,m) \in I_{N,A,k}$, set $\e=\frac{\theta(n)+\theta(m)}{2} \in \, ]0,2\pi[ \,$ and notice that
	\begin{align*}
		& \alpha (2\pi k+\theta(m)) \in \alpha 2\pi k + \alpha \, ]\e,2\pi] = \alpha 2\pi (k+1) + \alpha \, ]\e-2\pi, 0]
		\\ \mbox{ and } & \alpha(2\pi (k+1) + \theta(n)) \in \alpha 2\pi (k+1) + \alpha [0,\e[.
	\end{align*}
	Since in addition $\alpha \in \,]0,1[\,$ and the scaling factor $N\mapsto \phi(N)$ converges to $+\infty$, we claim that both points $m^{[\alpha,k]}$ and $n^{[\alpha,k+1]}$ are close to the ray $L_{\alpha,k}$ of argument $2\pi (k+1)\alpha$. Indeed, assume first that the segment $[n^{[\alpha,k+1]},m^{[\alpha,k]}]$ and the ray $L_{\alpha,k}$ don't intersect (which can happen only if $\alpha \geq \frac{1}{2})$. Applying Al-Kashi's law of cosines to the triangle with vertices $n^{[\alpha,k+1]}$, $m^{[\alpha,k]}$ and $0$ with angle $\omega \in [2\pi (1-\alpha), \pi]$ at $0$, we obtain the inequalities
	{\small
	\begin{align*}
		\Big(\frac{A}{\phi(N)}\Big)^2 & \geq |n^{[\alpha,k+1]}-m^{[\alpha,k]}|^2 = |n|^{2\alpha}+|m|^{2\alpha} - 2 |n|^\alpha |m|^\alpha \cos(\omega)
		\\ & \geq |n|^{2\alpha}+|m|^{2\alpha} - 2 |n|^\alpha |m|^\alpha \cos(2\pi (1-\alpha)) 
		\\ & = (|n|^\alpha-|m|^\alpha)^2 + 2|n|^\alpha |m|^\alpha(1-\cos(2\pi (1-\alpha))).
	\end{align*}
	}\begin{figure}[ht]
	\centering
	\scalebox{0.98}{
		\begin{adjustbox}{clip, trim=0.cm 0.cm 0.cm 0.cm, max width=\textwidth}
			\includegraphics{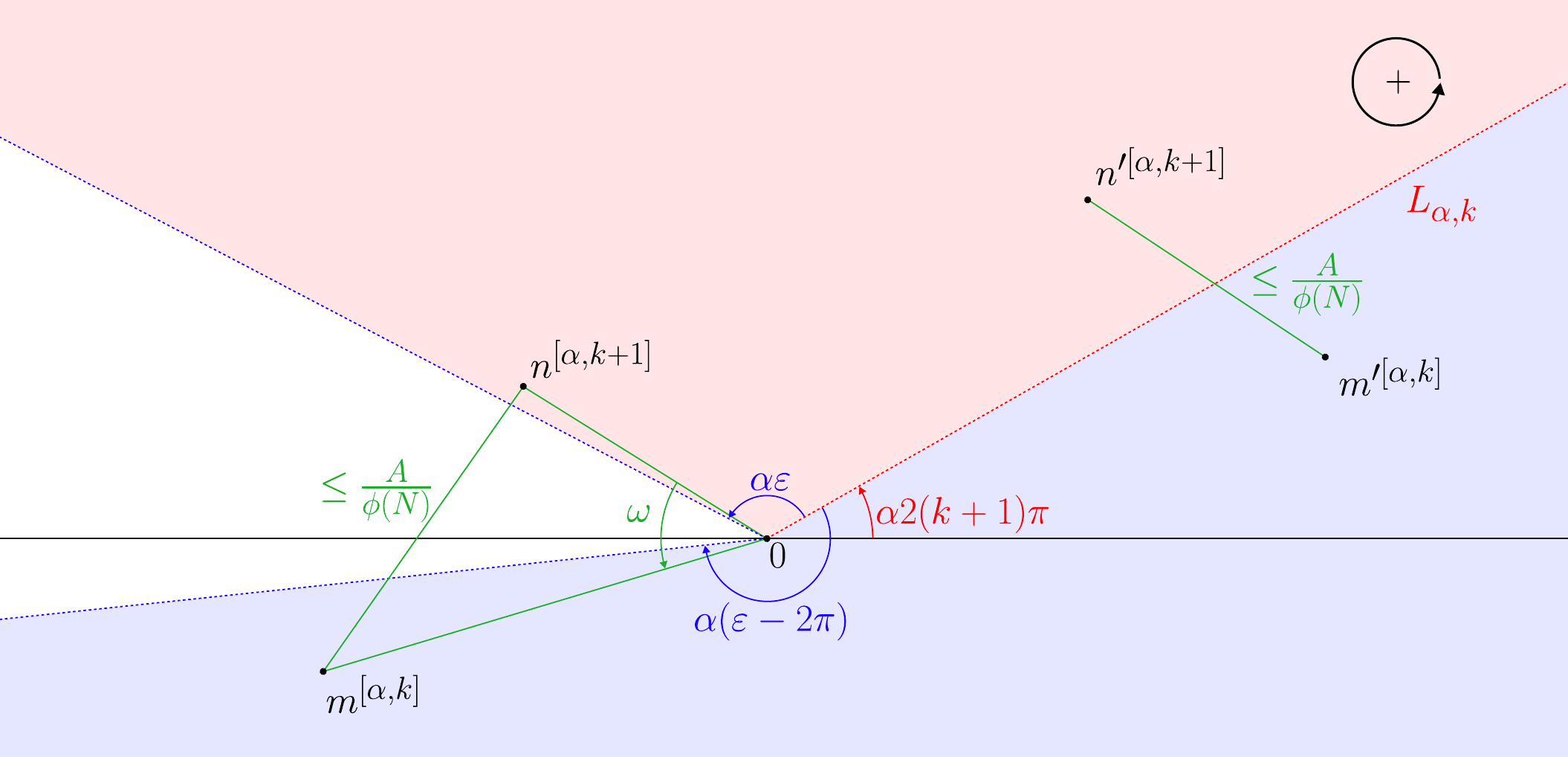}
		\end{adjustbox}
	}
	\caption{Illustration of the proof that both points $n^{[\alpha,k+1]}$ (confined in the \textcolor{red}{red} region) and $m^{[\alpha,k]}$ (in the \textcolor{blue}{blue} one) are close to the ray $L_{\alpha,k}$.}
	\label{fig:remove_cut_counting}
	\end{figure}
	
	\noindent Assuming that $|n| \leq |m|$ (instead of using $\min\{|n|,|m|\}$ and $\max\{|n|,|m|\}$), the latter equation and the triangle inequality on $|m|^\alpha=|m^{[\alpha,k]}|=|m^{[\alpha,k]}\pm n^{[\alpha,k+1]}|$ grant us the bound
	{\small
	\begin{equation}\label{eq:m^alpha_near_origin}
		|n|^\alpha \leq \frac{1}{\sqrt{2(1-\cos(2\pi (1-\alpha)))}} \frac{A}{\phi(N)} \mbox{ and } |m|^\alpha \leq \frac{A}{\phi(N)}+|n|^\alpha.
	\end{equation}
	}Then $n^{[\alpha,k+1]}$ and $m^{[\alpha,k]}$ are both close to $0$, hence close to $L_{\alpha,k}$.

	And now if $(n',m') \in I_{N,A,k}$ and if the segment $[n'^{[\alpha,k+1]},m'^{[\alpha,k]}]$ and the ray $L_{\alpha,k}$ intersect, we directly have the inequalities
	\begin{equation}\label{eq:m^alph_near_boundary}
		d(m'^{[\alpha,k]}, L_{\alpha,k}), \; d(n'^{[\alpha,k+1]}, L_{\alpha,k}) < \frac{A}{\phi(N)}.
	\end{equation}
	By Gauss counting argument \eqref{eq:gauss_count_beta_0_alt} and since $\frac{A}{\phi(N)} \to 0$, the inequalities \eqref{eq:m^alpha_near_origin} are only valid for a number $\bigO_\alpha(\frac{(1+\diam_{\vec{\Lambda}})^4}{\covol_{\vec{\Lambda}}^2})$ of indices $(n,m) \in \Lambda^2$. Hence, from now on we can assume that $[n^{[\alpha,k+1]}, m^{[\alpha,k]}]$ and the ray $L_{\alpha,k}$ do intersect and work with Equation \eqref{eq:m^alph_near_boundary}. Geometrically, this implies that (at least) one of the points $n^{[\alpha,k+1]}$ and $m^{[\alpha,k]}$ is in the open half-space centred at $L_{\alpha,k}$, i.e.~of equation $\Re(ze^{-i2\pi k\alpha})>0$. By symmetry, we can assume this holds for the point $m^{[\alpha,k]}$. Set
	$$
	P_\alpha : z \mapsto |z|^\alpha e^{i\alpha\omega} \mbox{ where } \arg(z) \equiv \omega \in 2\pi(k+1) + \,]\e-2\pi,\e[.
	$$
	This function coincides with $z \mapsto z^{[\alpha,k]}$ around $m$, and with $z\mapsto z^{[\alpha,k+1]}$ at $n$. Set $\e_N=\frac{A}{\phi(N)}$. Denote by $\ell$ a point in $L_{\alpha,k}$ for which the inequality $|m^{[\alpha,k]}-\ell|< \e_N$ holds. By the reversed triangle inequality, we see that $|\ell| \in D(|m|^\alpha, \e_N)$. Applying the mean value inequality to the inverse function of $P_\alpha$, we can locate the grid point $m$ as follows
	$$|m-P_\alpha^{-1}(\ell)| \leq |m^{[\alpha,k]} - \ell| \, \max_{D(\ell,\e_N)} |(P_\alpha^{-1})'| \leq \e_N \frac{1}{\alpha} (|\ell|+\e_N)^{\frac{1}{\alpha}-1}.$$
	In other words, the grid point $m$ is close to the positive real line in the following sense
	\begin{equation}\label{eq:m_close_negative_line}
		m \in D\Big(x_0, \, \frac{\e_N}{\alpha}(x_0^\alpha+\e_N)^{\frac{1}{\alpha}-1}\Big) \; \mbox{ where } \; x_0=P_\alpha^{-1}(\ell) \in \RR_+.
	\end{equation}
	We assume that $N$ is large enough so that the three inequalities, $\frac{2^{\frac{1}{\alpha}-1}\e_N}{\alpha} \leq \frac{1}{2}$, $\e_N \leq (\frac{\sys_{\Lambda}}{2})^\alpha$ and $\frac{\e_N}{\alpha}(\sys_{\Lambda}^\alpha+\e_N)^{\frac{1}{\alpha}-1} < \frac{\sys_{\Lambda}}{2}$ hold. The latter inequality implies that $x_0 \geq \frac{\sys_{\Lambda}}{2}$ for Equation $\eqref{eq:m_close_negative_line}$ to hold. One can then notice that the ball described in Equation \eqref{eq:m_close_negative_line} has radius bounded by $\frac{2^{\frac{1}{\alpha}-1}\e_N}{\alpha} x_0^{1-\alpha} \leq \frac{1}{2} x_0$. Since it is centred at $x_0 \geq \frac{\sys_{\Lambda}}{2}$, we obtain the inequality $\Re(m) \geq \frac{\sys_{\Lambda}}{4}$. More generally, for every real number $x \geq 0$, the ball from Equation \eqref{eq:m_close_negative_line} can intersect the vertical line above $x$ (or equivalently contains $x$) only if $x_0 \leq 2x$. Using the notation $L_g$ of Lemma \ref{lem:grid_pts_near_line}, this remark applied to $ x = \Re(m) \in [\frac{\sys_{\Lambda}}{4},|m|]$ implies that the point $m$ belongs to the set $L_g$ for the function
	$$ g:x \mapsto \frac{3^{\frac{1}{\alpha}-\alpha}\e_N}{\alpha} x^{1-\alpha}.$$
	Applying Lemma \ref{lem:grid_pts_near_line} to the grid $\Lambda$ and the inequalities $\max_{[k-1,k]}g=g(k)\leq g(N)$ (since $g$ is nondecreasing), this gives us the inequality
	\begin{equation}\label{eq:ineq_card_m_close_branch}
		\card \big\{ m \in \Lambda : \exists n \in \Lambda, (n,m) \in I_{N,A,k} \big\} \leq 4N \frac{(1+\diam_{\vec{\Lambda}})(g(N)+\diam_{\vec{\Lambda}})}{\covol_{\vec{\Lambda}}}.
	\end{equation}
	In order to count not only such points $m$ but all the ordered pairs $(n,m) \in I_{N,A,k}$, we will use the function $Q_\alpha : z \mapsto z^{[\frac{1}{\alpha},0]}$. The assumption $\theta(n)<\theta(m)$ gives the formula $\frac{n}{m}=Q_\alpha(\frac{n^{[\alpha,k+1]}}{m^{[\alpha,k]}})$. By applying the mean value inequality to the function $Q_\alpha$ between the points $1$ and $\frac{n^{[\alpha,k+1]}}{m^{[\alpha,k]}}$ on an adequate neighbourhood $V$ of $1$ (e.g.~ we can choose the half-disk $V=D(1,\frac{A}{\phi(N_0) \sys_{\Lambda}^\alpha}) \cap Q_\alpha(\CC^*)$ for $N_0$ large enough so that the closure of this half disk does not contain $0$), we obtain, with $c_\alpha = \max_V |Q_\alpha'|$,
	\begin{align*}
		& \big|1-\frac{n}{m}\big| \leq c_\alpha \big|1-\frac{n^{[\alpha,k+1]}}{m^{[\alpha,k]}}\big| \leq c_\alpha \frac{A}{\phi(N)|m|^\alpha}
		\\ \mbox{ i.e. } & |m-n| \leq c_\alpha \frac{A|m|^{1-\alpha}}{\phi(N)} \leq c_\alpha \frac{A N^{1-\alpha}}{\phi(N)}.\numberthis\label{eq:remove_cut_bound_m-n}
	\end{align*}
 	Using Gauss counting argument (more precisely, the right-hand inequality of Equation \eqref{eq:gauss_count_beta_0_alt} applied to the grid $m-\Lambda$), and recalling the definitions $\e_N=\frac{A}{\phi(N)}$ and $g(N) = \frac{3^{\frac{1}{\alpha}-\alpha}\e_N}{\alpha}N^{1-\alpha}$, the latter inequality yields, as $N\to\infty$,
	\begin{align*}
		\card(I_{N,A,k})
		& \leq 4N \frac{(1+\diam_{\vec{\Lambda}})(g(N)+\diam_{\vec{\Lambda}})}{\covol_{\vec{\Lambda}}} \frac{\pi}{\covol_{\vec{\Lambda}}} \big(c_\alpha \frac{AN^{1-\alpha}}{\phi(N)}+\diam_{\vec{\Lambda}}\big)^2
		\\ & = \bigO_{\alpha,\Lambda} \Big( N \big(\frac{AN^{1-\alpha}}{\phi(N)}+1\big)^3 \Big).
	\end{align*}
	In the case $\lambda \neq \infty$, this proves the lemma. If $\lambda = +\infty$, then Equation \eqref{eq:remove_cut_bound_m-n} becomes impossible as long as $N$ is large enough so that $\frac{\phi(N)}{N^{1-\alpha}} > c_\alpha \frac{A}{\sys_{\vec{\Lambda}}}$, hence $I_{N,A,k}$ is empty.
\end{proof}

\begin{proof}[Proof of Theorem \ref{th:main}]
	Immediate by combining Equation \eqref{eq:diff_pair_correl_alpha_irrat} with Lemma \ref{lem:counting_remove_cut}.
\end{proof}

\begin{remark}\label{rk:error_term_main}
	{\rm In addition, for all $f\in C^1_c(\CC)$ and $A>1$ such that $\supp{f} \subset D(0,A)$, we obtain the error term in Theorem \ref{th:main}, as $\min\{N,N'+N''\} \to \infty$,
	\begin{align*}
	\R_{N,N',N''}^{\alpha,\Lambda}(f) = & \int_\CC f(z) \rho_{\alpha,\vec{\Lambda},\lambda}(z) \, dz + \Err_{{Th.\ref{th:effective_cv_complex_correlations}}}(\alpha,\Lambda,f,A)
	\\ & + \bigO_{\alpha,\Lambda} \Big( \frac{N}{(N'+N'') \psi(N)} \big(\frac{ AN^{1-\alpha}}{\phi(N)}+1\big)^3 \, \11_{\lambda \neq +\infty} \Big).
	\end{align*}
	}
\end{remark}

\begin{proof}[Proof of Theorem \ref{th:main_rat}]
	Immediate by combining the rational version of Theorem \ref{th:effective_cv_complex_correlations} stated in Remark \ref{rk:effective_main_rat} with Lemma \ref{lem:counting_remove_cut}. 
\end{proof}

\begin{proof}[Proof of Theorem \ref{th:intro}]
	Let $\gamma \in \, ]0,1[\,$. Assuming $\phi(N)=N^\gamma$ and $\psi(N)=N^{2(2-\alpha-\gamma)}$, we can compare $\R_{N,0,b}^{\frac{1}{b},\Lambda}$ defined in Equation \eqref{eq:def_empirical_pair_cor_riemsurf} to the measure $\R_N$ defined in the introduction and obtain
	$$
	\R_N - \R_{N,0,b}^{\frac{1}{b},\Lambda} = \frac{1}{b\psi(N)} \, \sum_{\substack{m,n \in \Lambda, \, n \neq m \\ 0 < |m|,|n| \leq N}}  \; \sum_{\substack{ r \in \exp^{-1}(m), \, s \in \exp^{-1}(n) \\ |\Im(r)-\Im(s)| \geq 2 \pi \\ 0 \leq \Im(r), \, \Im(s) < 2\pi b}} \Delta_{\phi(N)(\exp(\frac{s}{b}) - \exp(\frac{r}{b}))}.
	$$
	Since $z \mapsto \exp(\frac{z}{b})$ induces a biholomorphism from $\CC/i 2\pi b \ZZ$ to $\CC^*$, for two points $\exp(\frac{r}{b})$ and $\exp(\frac{s}{b})$ to be close together, the associated classes $[r]$ and $[s]$ have to be close together in $\CC/i 2\pi b\ZZ$. Under the assumptions $|\Im(r)-\Im(s)| \geq 2 \pi$ and $0 \leq \Im(r), \Im(s) < 2\pi b$, this implies that one of the two points $r$,$s$ is close to the real line, and the other one to the horizontal line $\RR+i 2\pi b$. We use the notation $I_{N,A,b}$ from Lemma \ref{lem:counting_remove_cut}. Let $f \in C_c^1(\CC)$ and $A>1$ be such that $\supp{f} \subset D(0,A)$. For $N$ large enough, for an index $(n,m)$ to contribute to the sum  $\R_N(f) - \R_{N,0,b}^{\frac{1}{b},\Lambda}(f)$, then either $(n,m)$ or $(m,n)$ has to belong to $I_{N,A,b}$. The number of such points $n,m$ is evaluated in Lemma \ref{lem:counting_remove_cut}. Combining this with Theorem \ref{th:main_rat}, we obtain the vague convergence $\R_N - \R_{N,0,b}^{\frac{1}{b},\Lambda} \weakstar 0$ as $N\to +\infty$ and deduce Theorem \ref{th:intro}.
\end{proof}

\medskip
\noindent{\small {\it Fundings:} This work was supported by the French-Finnish CNRS IEA PaCAP.

\medskip
\noindent{\small {\it Acknowledgments:} The author would like to thank J.~Parkkonen and F.~Paulin, the supervisors of his ongoing doctorate, for their support, suggestions and corrections during this research. He also thanks the referee for their valuable inputs on an earlier version.}

\AtNextBibliography{\small}
\printbibliography[heading=bibintoc, title={References}]

\bigskip
{\small
	\noindent
	\begin{tabular}{l}
		Department of Mathematics and Statistics, P.O.~Box 35,\\
		40014 University of Jyv\"askyl\"a, FINLAND.\\
		{\it e-mail: sayousr@jyu.fi}
	\end{tabular}
}

\smallskip
and
\smallskip

{\small
	\noindent
	\begin{tabular}{l}
		Laboratoire de Mathématiques d'Orsay, UMR 8628 CNRS,\\
		Universit\'e Paris-Saclay, 91405 ORSAY Cedex, FRANCE.\\
		{\it e-mail: rafael.sayous@universite-paris-saclay.fr}
	\end{tabular}
}

\end{document}